\documentclass[leqno,11pt]{article}%
\usepackage{amsfonts}
\usepackage{amsmath}
\usepackage{amssymb}
\usepackage{palatino}
\usepackage[colorlinks]{hyperref}
\usepackage{graphicx}%
\providecommand{\U}[1]{\protect\rule{.1in}{.1in}}
\hypersetup{colorlinks=true, urlcolor=red, linkcolor=blue, citecolor=blue}
\newtheorem{theorem}{Theorem}[section]
\newenvironment{acknowledgement}
{\par\medskip\noindent\textbf{Acknowledgement.}\ }
{\par\medskip}

\newtheorem{definition}{Definition}[section]
\newtheorem{example}{Example}[section]

\newtheorem{lemma}{Lemma}[section]

\newtheorem{remark}{Remark}[section]

\newenvironment{proof}[1][Proof]{\noindent\textbf{#1.} }{\ \rule{0.5em}{0.5em}}
\begin{document}

\title{Viability problems for SDEs in a general Gelfand triple setup using a
variational approach~\thanks{E-mails:{\small ioana.ciotir@insa-rouen.fr (Ioana
Ciotir), aurika\_00@yahoo.com (Aureliu Ionescu), eduard.rotenstein@uaic.ro
(Eduard Rotenstein, corresponding author)}}}
\author{Ioana Ciotir$^{a}$, Aureliu Ionescu$^{b}$, Eduard Rotenstein$^{b,c}$\medskip\\$^{a}${\small Normandie University, INSA de Rouen Normandie, LMI (EA 3226 --
FR CNRS 3335),}\\{\small 76000 Rouen, France}\\$^{b}${\small Simion Stoilow Institute of Mathematics of the Romanian Academy,
Bucharest, Romania}\\$^{c}${\small Faculty of Mathematics, "Alexandru Ioan Cuza" University, 9
Carol I Blvd., Ia\c{s}i,} {\small Romania}}
\maketitle

\begin{abstract}
We establish viability conditions for a broad general class of stochastic
differential equations formulated within the variational approach,%
\[
dX_{t}=A\left(  X_{t}\right)  dt+B\left(  X_{t}\right)  dW_{t}.
\]
The underlying functional space setup is given by a general Gelfand triple
$V\subset H\subset V^{\ast},$ where $H$ is a separable Hilbert space in which
the stochastic evolution equation (SEE) is considered. Our approach to
deriving viability conditions is inspired by the techniques developed by Aubin
and Da Prato for finite-dimensional forward stochastic differential equations.
We establish necessary and sufficient conditions for the viability of closed
random constraint sets in terms of adapted variational tangent (and
contingent) sets. Furthermore, we illustrate the applicability of the proposed
framework by discussing several important classes of stochastic differential
equations that fit naturally within our variational setting.

\end{abstract}

MSC 2020: 60H15, 60H30, 47F05

Keywords: Stochastic partial differential equations; Gelfand setup; Viability
conditions; Variational contingent sets

\section{Introduction}

\subsection{State of the art}

The theory of stochastic evolution equations has become one of the central
frameworks for modeling systems influenced by randomness in
infinite-dimensional state spaces. Such equations arise naturally in fluid
mechanics, diffusion processes, population dynamics, image analysis,
mathematical finance, and control, where the underlying state is often
distributed in space and the random perturbation is best described by a
cylindrical Wiener process. For this reason, stochastic partial differential
equations (SPDEs) are now studied through several complementary approaches,
among which the variational formulation occupies a distinguished position
because it allows one to treat nonlinear and dissipative systems in a rigorous
functional-analytic setting. In the standard theory developed by
Pr\'{e}v\^{o}t and R\"{o}ckner, the state space is formulated as a Gelfand
triple and existence and uniqueness are derived under hemicontinuity,
monotonicity, coercivity, and boundedness assumptions on the drift and
diffusion coefficients Pr\'{e}vot, R\"{o}ckner \cite{Prevot/Rockner:07}, Da
Prato, Zabczyk \cite{DaPrato/Zabczyk:14}, Liu, R\"{o}ckner
\cite{Liu/Rockner:15}. This framework has proved very effective for
establishing well-posedness, yet it does not by itself address the geometric
question of whether the stochastic evolution remains inside a prescribed
constraint set.

Constraint preservation, or viability, is a fundamentally different issue. In
deterministic and stochastic dynamics alike, one is often interested not
merely in the existence of a solution, but in the persistence of
admissibility: positivity, boundedness, state constraints, obstacle-type
restrictions, or more general closed convex or non-convex conditions. The
conceptual foundation for this line of research was laid by Aubin, Da Prato
(\cite{Aubin/DaPrato:90}, \cite{Aubin:09}, \cite{Aubin:90}) who extended
Nagumo-type viability ideas to stochastic differential equations and
introduced stochastic contingent and tangent sets for random closed
constraints. Their work remains the classical reference for stochastic
viability and invariance, and it established the guiding principle that
viability can be characterized by local geometric compatibility between the
drift, the diffusion, and the boundary structure of the constraint set.

Stochastic viability for differential equation driven by fractional Brownian
motion was first introduced by Ciotir and R\u{a}\c{s}canu in
\cite{Ciotir/Rascanu:09} and it was latter extended in several directions (see
e.g. \cite{XuLuo}, \cite{Zhu}).

Related developments in stochastic control and moving-set problems further
clarified how random and time-dependent constraints can be handled in
stochastic systems, especially in finite dimensional or special
infinite-dimensional settings. For instance, Buckdahn, Quincampoix, Rainer,
R\u{a}\c{s}canu (\cite{Buckdahn/Quincampoix/Rainer/Rascanu:02},
\cite{Buckdahn/Quincampoix/Rascanu:00}) approached the viability issue, for
both forward and backward SDEs via the viscosity solutions for some suitable
chosen PDEs. However, the original results were developed in a finite
dimensional context, and the corresponding infinite dimensional theory has
evolved only partially in parallel with the general theory of SPDEs.

In parallel, the classical variational literature has continued to intensively
develop through stochastic porous media equations (see e.g. \cite{Barbu:2010},
\cite{Barbu/DaPrato:06}, \cite{Barbu/DaPrato/Rockner:09},
\cite{Barbu/Rockner:13}, \cite{Ciotir:11}, \cite{Ciotir/Tolle:16},
\cite{Ciotir:17}, \cite{Ciotir/Fukuizumi/Goreac:25},
\cite{Ciotir/Goreac/Li/Peng/Rotenstein:26}, stochastic variational
inequalities (see e.g. \cite{Rascanu/Rotenstein:11},
\cite{Rascanu/Rotenstein:17}, \cite{Gassous/Rascanu/Rotenstein:12},
\cite{Gassous/Rascanu/Rotenstein:15}), stochastic Stefan problem (see e.g.
\cite{Barbu/DaPrato:2002}, \cite{Barbu/Ciotir/Danaila:21},
\cite{Ciotir/Goreac/Flandoli:26}), stochastic Navier-Stokes and others.

More recently, some viability results were studied for the porous media
equation by Ciotir, Goreac and Munteanu in \cite{Ciotir/Goreac/Munteanu:23}
and \cite{Ciotir/Goreac/Munteanu:23Asia}. Those cases are partially covered by
the present work, together with other examples. We need to mention that the
present work is using a different, more general approach in a general
Hilbert-Gelfand triple.

To the best of our knowledge, no Nagumo-type characterization has yet been
established for stochastic evolution equations in the general variational
Gelfand triple framework considered here. This setup can accommodate both the
infinite-dimensional geometry and the probabilistic structure of the equation.

This gap is precisely where the present paper is situated. We study viability
properties for stochastic differential equations in a general Hilbert-Gelfand
triple $V\subset H\subset V^{\ast}$, with dynamics governed by
\[
dX_{t}=A(X_{t})\,dt+B(X_{t})\,dW_{t},
\]
where the solution is interpreted in the variational sense and the stochastic
forcing is driven by a cylindrical Wiener process. The constraints are modeled
as closed convex random sets, possibly time dependent, so that the admissible
region itself may evolve with the stochastic environment. Such moving sets
naturally arise in constrained SPDEs, stochastic control, and
state-constrained evolution problems, where the feasible domain is not fixed a
priori but may depend on time, randomness, or external parameters. The main
objective of the paper is to provide a necessary and sufficient
characterization of viability in terms of adapted variational tangent and
contingent sets, thereby extending the geometric viability approach of Aubin
and Da Prato to a broad infinite-dimensional variational setting.

The motivation for such a result is both theoretical and practical. From the
theoretical point of view, the variational approach gives access to a rich
class of nonlinear stochastic dynamics, but it typically yields only
existence, uniqueness, and a priori bounds. By contrast, viability theory is
concerned with local compatibility conditions that determine whether a
stochastic flow can remain inside a closed constraint. Combining the two
creates a substantially more delicate problem, because the geometry of the
constraint must be understood in the ambient Hilbert space while the
stochastic flow is controlled through estimates in the variational space $V$
and its dual. From the practical point of view, viability is indispensable in
applications where admissibility cannot be violated, such as
positivity-preserving models, bounded-state systems, or constrained random
evolution under uncertainty.

\subsection{Personal contributions}

Technically, our analysis is based on a framework of hypotheses that is
natural in variational SPDE theory: hemicontinuity, weak monotonicity,
coercivity, and boundedness ensure the existence and uniqueness of solutions,
while additional assumptions such as local Lipschitz continuity of the drift,
dual positivity, and linear growth of the diffusion are imposed to obtain the
viability characterization. These hypotheses are strong enough to make the
tangent-set machinery effective, yet broad enough to cover relevant nonlinear
models. In particular, the paper points to a number of motivating examples,
including equations with diffusion operators of divergence form, semilinear
equations with Dirichlet or Neumann boundary conditions, a stochastic
Navier-Stokes-type equation, and a porous media model. The variety of examples
demonstrates that the abstract assumptions are not merely formal, but are
compatible with concrete stochastic PDEs arising in applied analysis; see, for
example, Duvaut and Lions \cite{Duvaut/Lions:76} for further details.

The main result establishes an equivalence between the viability of the random
closed set and the inclusion of the pair $(A(x),B(x))$ in the adapted
stochastic variational tangent or contingent set associated with the
constraint. This result can be viewed as a Nagumo-type theorem for variational
SPDEs with random constraints. The paper is organized as follows. After
recalling, in Section \ref{Framework}, the structure of the Gelfand triple and
the assumptions needed for the variational formulation, we introduce the
random constraints and the associated notions of variational tangent and
contingent sets. Before advancing to the main results, we first present, in
Section \ref{Motivating examples}, a collection of motivating examples
illustrating how the abstract theory applies to nonlinear stochastic PDEs and
related models. We then state, in Section \ref{Main results}, the main
viability result and prove, in Section \ref{Proofs}, the equivalence between
viability and the tangent-set characterization using estimates on the solution
process and the boundary behavior of the constraint.

More precisely, we consider a general stochastic differential equation on a
Hilbert space $H$ of the following form
\begin{equation}
dX_{t}=A\left(  X_{t}\right)  dt+B\left(  X_{t}\right)  dW_{t},
\label{GeneralFormEq.}%
\end{equation}
where $W_{t},$ $t\in\left[  0,T\right]  $ is a cylindrical $Q-$Wiener process
on a complete filtered probability space $(\Omega,\mathcal{F},\mathbb{F=}%
\left(  \mathcal{F}_{t}\right)  _{t\geq0},\mathbb{P)}$ with $Q=I$ on another
separable Hilbert space $\left(  U,\left\langle ,\right\rangle _{U}\right)  $.

The existence and uniqueness of solutions to equations of this type were
initially studied under monotonicity conditions by Pardoux (\cite{Pardoux:75})
and comprehensive studies were further developed by Krylov and Rozovskii
\cite{Krylov/Rosowskii:79} and \cite{Rosowskii:90}. To investigate viability
within a variational framework for such a general class of equations, we need
to consider a Hilbert-Gelfand triple, providing also several examples that fit
within this framework.

\section{Framework and spaces setup\label{Framework}}

Let us introduce now, in a rigorous manner, the space framework of our study.
We shall first recall the construction of a generic Gelfand triple of the form
$V\subset H\subset V^{\ast}$. Let $H$ be a separable Hilbert space with the
corresponding scalar product $\left\langle ,\right\rangle _{H}$ and $H^{\ast}$
being its dual. We take $V$ a Banach space such that $V\subset H$ continuously
and densely. It follows that the dual space $V^{\ast}$ verifies that $H^{\ast
}\subset V^{\ast}$ continuously and densely. After identifying $H$ and
$H^{\ast}$ via the Riesz isomorphism we have that%
\[
V\subset H\subset V^{\ast},
\]
continuously and densely, with the embedding positive constant $\mathcal{C}%
_{V,H}~,$ respectively, $\mathcal{C}_{H,V^{\ast}}$. If $_{V^{\ast}%
}\left\langle ,\right\rangle _{V}$ denotes the dualization between $V$ and
$V^{\ast}$, we have that%
\[
_{V^{\ast}}\left\langle z,v\right\rangle _{V}=\left\langle z,v\right\rangle
_{H},\quad\text{for all }z\in H\text{ and }v\in V\text{.}%
\]
Kuratowski's theorem states that $V\in\mathcal{B}\left(  H\right)  ,$
$H\in\mathcal{B}\left(  V^{\ast}\right)  $ and $\mathcal{B}\left(  V\right)
=\mathcal{B}\left(  H\right)  \cap V,$ $\mathcal{B}\left(  H\right)
=\mathcal{B}\left(  V^{\ast}\right)  \cap H.$

\begin{remark}
\label{remark 1 spaces projection} In our case we need also to take $V$ and
$V^{\ast}$ as being Hilbert spaces\footnote{The need to work under a
Hilbert-Gelfand triple setup still permits to provide consistent models which
can be covered by this assumption. More details will be presented in
Subsection \ref{Motivating examples}.} in order to get a duality mapping
$J:V\rightarrow V^{\ast}$ defined, for every $x\in V,$ by
\[
J\left(  x\right)  =\left\{  x^{\ast}\in V^{\ast}~;~_{V^{\ast}}\left\langle
x^{\ast},x\right\rangle _{V}=\left\Vert x\right\Vert _{V}^{2}=\left\Vert
x^{\ast}\right\Vert _{V^{\ast}}^{2}\right\}  .
\]
We define $J_{H}:D\left(  J_{H}\right)  \subset V\rightarrow H,$\ where%
\[
\left\{
\begin{array}
[c]{rcl}%
D\left(  J_{H}\right)  & = & \left\{  y\in V;~J\left(  y\right)  \in
H\right\}  ,\smallskip\\
J_{H}\left(  y\right)  & = & J\left(  y\right)  ,\quad\forall y\in D\left(
J_{H}\right)  .
\end{array}
\right.
\]
One can easily check that $J_{H}$ is self-adjoint and positive mapping defined
in $H$. Indeed, for all $z,y\in D\left(  J_{H}\right)  $ we have that
\[
\left\langle J_{H}\left(  z\right)  ,y\right\rangle _{H}=_{V^{\ast}%
}\left\langle J\left(  z\right)  ,y\right\rangle _{V}=\left\langle
z,y\right\rangle _{V}=_{V}\left\langle z,J\left(  y\right)  \right\rangle
_{V^{\ast}}=\left\langle z,J_{H}\left(  y\right)  \right\rangle _{H}%
\]
and
\[
\left\langle J_{H}\left(  y\right)  ,y\right\rangle _{H}=_{V^{\ast}%
}\left\langle J\left(  y\right)  ,y\right\rangle _{V}=\left\Vert y\right\Vert
_{V}^{2}.
\]
This assures the existence of a square root operator $J_{H}^{1/2},$ such that
$J_{H}=J_{H}^{1/2}\circ J_{H}^{1/2}.$

In our case, we shall use it in order to prove that the solution is bounded in
$V.$ More precisely, we have the following equalities:%
\begin{equation}
\left\Vert x\right\Vert _{V}^{2}=\left\langle x,x\right\rangle _{V}=_{V^{\ast
}}\left\langle J_{H}\left(  x\right)  ,x\right\rangle _{V}=\left\langle
J_{H}\left(  x\right)  ,x\right\rangle _{H}=\left\Vert J_{H}^{1/2}\left(
x\right)  \right\Vert _{H}~, \label{norm equality}%
\end{equation}
for all $x\in D\left(  J_{H}\right)  $. Since the restricted projection
$J_{H}$ defined above\ is $m-$ accretive in $H\times H$, we can take its
Yosida approximation $J_{H}^{\varepsilon}$, i.e.%
\[
J_{H}^{\varepsilon}\left(  y\right)  :=\frac{1}{\varepsilon}\left(  y-\left(
1+\varepsilon J_{H}\right)  ^{-1}\left(  y\right)  \right)  \in J_{H}\left(
\left(  1+\varepsilon J_{H}\right)  ^{-1}\left(  y\right)  \right)  ,
\]
for all $y\in H.$ By the classical theory, we have that $J_{H}^{\varepsilon}$
is also self-adjoint and positively defined and, therefore, we have that
$\left(  J_{H}^{\varepsilon}\right)  ^{1/2}$ is the square root operator of
$J_{H}^{\varepsilon}$, (i.e. $J_{H}^{\varepsilon}=\left(  J_{H}^{\varepsilon
}\right)  ^{1/2}\circ\left(  J_{H}^{\varepsilon}\right)  ^{1/2}$).
\end{remark}

Referring to the drift and diffusion operators driving
Eq.(\ref{GeneralFormEq.}),%
\[
A:V\times\Omega\rightarrow V^{\ast}\quad\text{and}\quad B:V\times
\Omega\rightarrow L_{2}\left(  U,H\right)  ~\footnote{$L_{2}\left(
U,H\right)  $ denotes the space of Hilbert-Schmidt operators from $U$ to $H$%
.},
\]
which are assumed to be $\left(  \mathcal{B}\left(  V\right)  \otimes
\mathbb{F},\mathcal{B}\left(  V^{\ast}\right)  \right)  -,$ respectively
$\left(  \mathcal{B}\left(  V\right)  \otimes\mathbb{F},\mathcal{B}\left(
H\right)  \right)  -$ adapted.

We shall first recall the classical hypotheses which are necessary to obtain
the existence and uniqueness of the solution in infinite dimensional spaces,
also known as the \textit{variational approach}. For more details, the
interested reader is invited to consult Pr\'{e}vot, Rockner \cite[Chapter
4]{Prevot/Rockner:07}.$\smallskip$

\noindent\textbf{Hypotheses I. }The following assumptions should be understood
to take place $\mathbb{P}$-$a.e.\omega\in\Omega:$

\begin{itemize}
\item[$\left(  H_{1}\right)  $] (\textit{Hemicontinuity}) For all $u,v,x\in V$
the map
\[
r\mapsto_{V^{\ast}}\left\langle A\left(  u+rv\right)  ,x\right\rangle
_{V}\text{,}\quad\text{for }r\in\mathbb{R}\text{,}%
\]
is continuous.

\item[$\left(  H_{2}\right)  $] (\textit{Weak monotonicity}) There exists
$c\in\mathbb{R}$ such that, for all $u,v\in V$,%
\[
_{V^{\ast}}\left\langle A\left(  u\right)  -A\left(  v\right)
,u-v\right\rangle _{V}+\left\Vert B\left(  u\right)  -B\left(  v\right)
\right\Vert _{L_{2}\left(  U,H\right)  }^{2}\leq C_{A,B}\left\Vert
u-v\right\Vert _{H}^{2}.
\]

\item[$\left(  H_{3}\right)  $] (\textit{Coercivity}) There exist $\alpha
\in(1,\infty),$ $C_{1}\in\mathbb{R}$, $C_{2}\in(0,\infty)$ such that, for all
$v\in V,$ we have%
\[
_{V^{\ast}}\left\langle A\left(  v\right)  ,v\right\rangle _{V}+\left\Vert
B\left(  v\right)  \right\Vert _{L_{2}\left(  U,H\right)  }^{2}\leq
C_{1}\left\Vert v\right\Vert _{H}^{2}-C_{2}\left\Vert v\right\Vert
_{V}^{\alpha}~.
\]

\item[$\left(  H_{4}\right)  $] (\textit{Boundedness}) There exist $c_{3}%
\in(0,\infty)$ such that, for all $v\in V,$ we have%
\[
\left\Vert A\left(  v\right)  \right\Vert _{V^{\ast}}\leq C_{3}\left\Vert
v\right\Vert _{V}^{\alpha-1}.
\]

\end{itemize}

The second set of hypotheses are necessary only for the viability
result.$\smallskip$

\noindent\textbf{Hypotheses II.}

\begin{itemize}
\item[$\left(  H_{5}\right)  $] (\textit{Locally} \textit{Lipschitz}) For
every $R>0,$ there exists a positive constant $L_{A,R}$ such that, for every
$u,v\in V,$ with $\left\Vert u\right\Vert _{V}\leq R,\left\Vert v\right\Vert
_{V}\leq R,$%
\[
\left\Vert A\left(  u\right)  -A\left(  v\right)  \right\Vert _{V^{\ast}}\leq
L_{A,R}\left\Vert u-v\right\Vert _{V}~.
\]

\item[$\left(  H_{6}\right)  $] (\textit{Dual positivity}) For $J:V\rightarrow
V^{\ast}$ the duality mapping we have that
\[
\left\langle -A\left(  u\right)  ,J_{H}^{\varepsilon}\left(  u\right)
\right\rangle _{H}\geq0~,
\]
for all $u\in D\left(  J_{H}\right)  \subset V,$ such that $A\left(  u\right)
\in H$.\medskip

\item[$\left(  H_{7}\right)  $] \textit{(Linear noise) }The operator $B$ is
linear and there exists a positive constant $M_{B}$ such that%
\[
\left\Vert B\left(  v\right)  \right\Vert _{L_{2}\left(  U,H\right)  }^{2}\leq
M_{B}\left\Vert v\right\Vert _{H}^{2}~,
\]
for all $v\in H$.
\end{itemize}

Under \textbf{Hypotheses I}, for each initial condition $X\left(  0\right)
\in L^{2}\left(  \Omega;H\right)  $ we can study equation
(\ref{GeneralFormEq.}) in the sense of the variational approach (see Pardoux
\cite{Pardoux:75}). More precisely, the equation has a unique strong solution
by applying Theorem 1.1, page 83. \textbf{Hypotheses II} are necessary for the
viability result.

\begin{theorem}
[Pardoux \cite{Pardoux:75}, Theorem 1.1]Under Hypotheses I $(\left(
H_{1}\right)  $ to $\left(  H_{4}\right)  )$ admits a unique solution%
\[
X\in L^{p}\left(  \Omega\times\left[  0,T\right]  ;V\right)  \cap L^{2}\left(
\Omega;C\left(  0,T;H\right)  \right)  .
\]

\end{theorem}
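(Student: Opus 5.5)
The plan is the classical Krylov--Rozovskii--Pardoux scheme: Galerkin approximation, uniform a priori bounds from coercivity, weak compactness, and identification of the nonlinear limit by the monotonicity trick. Throughout, $p=\alpha$ from $(H_3)$. Fix an orthonormal basis $\{e_k\}\subset V$ of $H$ and an orthonormal basis $\{g_k\}$ of $U$. Let $H_n=\mathrm{span}\{e_1,\dots,e_n\}$ and let $P_n:V^{\ast}\to H_n$ be given by $P_n y=\sum_{k\le n}{}_{V^{\ast}}\langle y,e_k\rangle_V e_k$. Write $W^{(n)}=\sum_{k\le n}\langle W,g_k\rangle_U g_k$.

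\textbf{Step 1: Galerkin system.} We solve the finite-dimensional SDE
\[
dX^n_t=P_nA(X^n_t)\,dt+P_nB(X^n_t)\,dW^{(n)}_t,\qquad X^n_0=P_nX_0 .
\]
On $H_n$ all norms are equivalent. By $(H_1)$ and $(H_2)$ the coefficients are continuous and weakly monotone, and by $(H_3)$ they are coercive. The standard finite-dimensional theorem for locally weakly monotone, coercive coefficients therefore gives a unique global strong solution.

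\textbf{Step 2: uniform a priori bounds.} Apply It\^o's formula to $\|X^n_t\|_H^2$ and use $(H_3)$, noting that the Hilbert--Schmidt norm only decreases under the projections. Gronwall's lemma then yields
\[
\sup_n\Big(\mathbb{E}\sup_{t\le T}\|X^n_t\|_H^2+\mathbb{E}\int_0^T\|X^n_t\|_V^{\alpha}\,dt\Big)<\infty .
\]
The supremum over $t$ is handled with Burkholder--Davis--Gundy applied to the martingale term. Combining this with $(H_4)$ bounds $A(X^n)$ in $L^{\alpha/(\alpha-1)}(\Omega\times[0,T];V^{\ast})$. By the growth part of $(H_3)$ it also bounds $B(X^n)$ in $L^2(\Omega\times[0,T];L_2(U,H))$.

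\textbf{Step 3: weak limits.} Extract a subsequence with
\[
X^n\rightharpoonup X \text{ in } L^{\alpha}(\Omega\times[0,T];V),\qquad A(X^n)\rightharpoonup Y,\qquad B(X^n)\rightharpoonup Z .
\]
Passing to the limit in the Galerkin equation tested against $e_k$ shows that
\[
X_t=X_0+\int_0^tY\,ds+\int_0^tZ\,dW
\]
holds in $V^{\ast}$. The It\^o formula for $\|\cdot\|_H^2$ in the Gelfand triple (Krylov--Rozovskii) gives a continuous $H$-valued version with $X\in L^2(\Omega;C(0,T;H))$.

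\textbf{Step 4: identification of $Y$ and $Z$ (main obstacle).} We must show $Y=A(X)$ and $Z=B(X)$; this is the delicate step, because $A$ is nonlinear and only weak convergence is available. We use Minty's monotonicity trick with the weight $e^{-ct}$, where $c=C_{A,B}$ from $(H_2)$. Take any test process $\phi\in L^{\alpha}(\Omega\times[0,T];V)\cap L^2(\Omega\times[0,T];H)$.
\begin{itemize}
\item By It\^o's formula, weak lower semicontinuity, and $(H_2)$,
\[
\mathbb{E}\int_0^Te^{-ct}\Big(2\,{}_{V^{\ast}}\langle Y-A(\phi),X-\phi\rangle_V+\|Z-B(\phi)\|^2_{L_2}-c\|X-\phi\|_H^2\Big)\,dt\le 0 .
\]
\item This inequality is first obtained at the Galerkin level, where It\^o's formula for $e^{-ct}\|X^n_t\|_H^2$ is exact. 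We then pass to the limit using $\liminf \mathbb{E}\, e^{-cT}\|X^n_T\|^2_H\ge \mathbb{E}\, e^{-cT}\|X_T\|^2_H$.
\item Choosing $\phi=X$ gives $Z=B(X)$.
\item Choosing $\phi=X-\varepsilon v$, dividing by $\varepsilon$ and letting $\varepsilon\downarrow0$ via $(H_1)$ gives $Y=A(X)$.
\end{itemize}

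\textbf{Step 5: uniqueness.} Given two solutions, apply It\^o's formula to $e^{-ct}\|X_t-\tilde X_t\|_H^2$ together with $(H_2)$. The expectation is nonpositive, so the solutions coincide in $L^2(\Omega;C(0,T;H))$. Uniqueness also implies that the whole Galerkin sequence, not just a subsequence, converges weakly.
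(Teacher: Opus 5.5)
The paper gives no proof of this statement; it simply cites Pardoux's Theorem 1.1, under the hypotheses of Pr\'ev\^ot--R\"ockner, Chapter 4. Your sketch is the standard Krylov--Rozovskii Galerkin--monotonicity proof of that result, essentially Pr\'ev\^ot--R\"ockner, Theorem 4.2.4. Your Minty step uses the terminal time $T$ rather than integrating against a weight $\psi(t)\,dt$, which is a legitimate variant. The proof is correct. Compared with the bare citation, it is self-contained and makes visible that only $(H_1)$--$(H_4)$ are used, not $(H_5)$--$(H_7)$.

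Three small repairs are needed.

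\textbf{The weight constant.} The paper's $(H_2)$ has no factor $2$ on the duality pairing. The inequality you actually need, $2\,{}_{V^*}\langle A(u)-A(v),u-v\rangle_V+\|B(u)-B(v)\|_{L_2}^2\le c\|u-v\|_H^2$, therefore holds with $c=2C_{A,B}$, not $c=C_{A,B}$. Use $e^{-ct}$ with $c=2C_{A,B}$ (or larger) both in Step 4 and in the uniqueness step.

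\textbf{The $L^2$ bound on $B(X^n)$.} This bound uses $(H_4)$ together with $(H_3)$, not $(H_3)$ alone:
\[
\|B(v)\|_{L_2}^2\le C_1\|v\|_H^2-{}_{V^*}\langle A(v),v\rangle_V\le C_1\|v\|_H^2+C_3\|v\|_V^{\alpha}.
\]

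\textbf{Weak convergence at $t=T$.} Your inequality $\liminf_n\mathbb{E}\,e^{-cT}\|X^n_T\|_H^2\ge\mathbb{E}\,e^{-cT}\|X_T\|_H^2$ requires $X^n_T\rightharpoonup X_T$ weakly in $L^2(\Omega;H)$, where $X$ is the continuous version. Weak convergence in $L^\alpha(\Omega\times[0,T];V)$ alone does not give this at a fixed time. Derive it by testing the Galerkin equation at $t=T$ against $\eta e_k$ with $\eta$ bounded. Then use that $Z\mapsto\int_0^T Z\,dW$ is bounded and linear, hence weakly continuous, from progressively measurable $L^2(\Omega\times[0,T];L_2(U,H))$ into $L^2(\Omega;H)$. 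Alternatively, integrate in time against $\psi\ge0$ as Pr\'ev\^ot--R\"ockner do.
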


\subsection{Some motivating examples\label{Motivating examples}}

The above assumptions permit various and quite general classes of problems to
be covered by the present study. The need to work under a Hilbert-Gelfand
triple setup restriction does not exclude important type of space environments
and models, as one can see below. For the clarity of the presentation, when
needed, we prove in Appendix \ref{Appendix} that the coefficients of the
described problems are in concordance with the hypothesis we assumed for our
research. We present now some examples/situations which can be covered by the
general form we considered, the main focus being on the structure of $A.$

\begin{example}
[\textbf{Equations with divergence-type drift}]$~\smallskip$

We consider the Hilbert-Gelfand triple $H_{0}^{1}\left(  \mathcal{O}\right)
\subset L^{2}\left(  \mathcal{O}\right)  \subset H^{-1}\left(  \mathcal{O}%
\right)  ,$ which is suitable for the equation of the type%
\begin{equation}
\left\{
\begin{array}
[c]{ll}%
dX_{t}=\operatorname{div}(a\left(  \nabla X_{t}\right)  )dt+B\left(
X_{t}\right)  dW_{t} & ,\text{ }in\text{ }\left(  0,\infty\right)
\times\mathcal{O},\medskip\\
X_{t}=0 & ,\text{ }on\text{ }\left(  0,\infty\right)  \times\partial
\mathcal{O},\medskip\\
X_{0}=x & ,\text{ }in\text{ }\mathcal{O},
\end{array}
\right.  \label{ex of eq. with divergence-type op}%
\end{equation}
where $\mathcal{O}$\ is a non-empty bounded open domain in $%
%TCIMACRO{\U{211d} }%
%BeginExpansion
\mathbb{R}
%EndExpansion
^{d}$, $d\geq0$, with smooth boundary.

One can easily see that this equation can be rewritten in the framework of our
paper by taking the operator%
\[
\left\{
\begin{array}
[c]{l}%
A:H_{0}^{1}\left(  \mathcal{O}\right)  \rightarrow H^{-1}\left(
\mathcal{O}\right)  ~,\medskip\\
A\left(  u\right)  :=\operatorname{div}(a\left(  \nabla u\right)
,\quad\forall u\in H_{0}^{1}\left(  \mathcal{O}\right)  ,
\end{array}
\right.
\]
which means, in variational form,%
\[
\left(  A\left(  u\right)  ,v\right)  _{L^{2}\left(  \mathcal{O}\right)
}=-\int\limits_{\mathcal{O}}a\left(  \nabla u\right)  \nabla vd\xi
,\quad\forall u,v\in H_{0}^{1}\left(  \mathcal{O}\right)  .
\]

In order to situate Eq.(\ref{ex of eq. with divergence-type op}) in our
constructed research environment, we assume that the operator $a:\mathbb{R}%
^{d}\rightarrow\mathbb{R}^{d}$ is a map of the type $a=\nabla j,$ where
$j:\mathbb{R}^{d}\rightarrow\mathbb{R}$ is a proper convex, differentiable
function. Moreover,

\begin{itemize}
\item[$a)$] there exist $C_{1}>0$ and $C_{2}\geq0$ such that%
\[
j\left(  r\right)  \geq C_{1}\left\vert r\right\vert _{\mathbb{R}^{d}}%
^{2}-C_{2},\quad\forall r\in\mathbb{R}^{d}~;
\]

\item[$b)$] there exists $C_{3}>0$ such that%
\[
\left\vert a\left(  r\right)  \right\vert _{\mathbb{R}^{d}}\leq C_{3}\left(
\left\vert r\right\vert +1\right)  ,\quad\forall r\in\mathbb{R}^{d}~;
\]

\item[$c)$] there exists $C_{4}>0$ such that%
\[
\left\langle a\left(  r\right)  -a\left(  s\right)  ,r-s\right\rangle
_{\mathbb{R}^{d}}\geq C_{4}\left\vert r-s\right\vert _{\mathbb{R}^{d}}%
^{2}~,\quad\forall r,s\in\mathbb{R}^{d}~.
\]

\end{itemize}

For more informations concerning the need of assumption $\left(  H_{6}\right)
,$ the reader is invited to see the detailed explanations provided by Barbu,
Rockner \cite{Barbu/Rockner:13}. Even, at first impression, it seems a bit
artificial, it is well compatibles with a lot of important models which can be
address by our study.$\smallskip$

Two particular forms of the operator $A$ can be enumerated below.$\smallskip$

\noindent$\left(  i\right)  $ operator for a model of electrostatic nonlinear
conductivity:%
\[
A\left(  u\right)  :=\operatorname{div}\left(  \sigma\left(  u\right)  \nabla
u\right)  ,
\]
where we consider the strictly positive Lipschitz function $\sigma$, given by
$\sigma\left(  u\right)  =1+1/\left(  1+u^{2}\right)  .\smallskip$

\noindent$\left(  ii\right)  $ operator used in image processing by non-linear
filtration:%
\[
A\left(  u\right)  :=\operatorname{div}\left(  g\left(  \left\vert \nabla
u\right\vert \right)  \nabla u\right)  ,
\]
where $g\left(  s\right)  =1/\left(  1+s^{2}\right)  .$ This choice is a
regularized version of the Perona-Malik diffusion model, with applications to
image denoising and edge-preserving smoothing. However, the classical form is
not Lipschitz, but, by regularization, with $g_{\varepsilon}\left(  s\right)
=1/\left(  1+s^{2}+\varepsilon\right)  ,$ $\varepsilon>0,$ it becomes well
defined, locally Lipschitz and positively energetic.\medskip

For the diffusion part of (\ref{ex of eq. with divergence-type op}), the
operator $B:H_{0}^{1}\left(  \mathcal{O}\right)  \rightarrow L_{2}\left(
L^{2}\left(  \mathcal{O}\right)  ;L^{2}\left(  \mathcal{O}\right)  \right)  $
is assumed to be linear from $H_{0}^{1}\left(  \mathcal{O}\right)  $ to the
space of Hilbert-Schmidt operators $L_{2}\left(  L^{2}\left(  \mathcal{O}%
\right)  ;L^{2}\left(  \mathcal{O}\right)  \right)  $.
\end{example}

\begin{example}
[\textbf{Semi-linear Dirichlet equations}]%
\label{Example semi-linear Dirichlet eq.}$~\smallskip$

As above, we consider the same Hilbert-Gelfand triple $H_{0}^{1}\left(
\mathcal{O}\right)  \subset L^{2}\left(  \mathcal{O}\right)  \subset
H^{-1}\left(  \mathcal{O}\right)  ,$ which is also suitable for the equation
of the type%
\begin{equation}
\left\{
\begin{array}
[c]{ll}%
dX_{t}=\Delta X_{t}dt-\beta\left(  X_{t}\right)  dt+B\left(  X_{t}\right)
dW_{t} & ,\text{ }in\text{ }\left(  0,\infty\right)  \times\mathcal{O}%
,\medskip\\
X_{t}=0 & ,\text{ }on\text{ }\left(  0,\infty\right)  \times\partial
\mathcal{O},\medskip\\
X_{0}=x & ,\text{ }in\text{ }\mathcal{O},
\end{array}
\right.  \label{Semilinear Dirichlet eq.}%
\end{equation}
where $\mathcal{O}$\ is a non-empty bounded open domain in $\mathbb{R}^{d}$,
$d\geq0$, with smooth boundary. This equation can be rewritten in the
framework of our paper by taking the operator%
\[
A:H_{0}^{1}\left(  \mathcal{O}\right)  \rightarrow H^{-1}\left(
\mathcal{O}\right)
\]
which means in variational form,%
\begin{equation}
\left(  A\left(  u\right)  ,v\right)  _{L^{2}(\mathcal{O})}=-\int
\limits_{\mathcal{O}}\nabla u\nabla vd\xi-\int\limits_{\mathcal{O}}%
\beta\left(  u\right)  vd\xi,\quad\forall u,v\in H_{0}^{1}\left(
\mathcal{O}\right)  . \label{representation of operator A}%
\end{equation}
In order to be able to include Eq.(\ref{Semilinear Dirichlet eq.}) under the
hypothesis of our general result, let us assume that the operator $\beta:$ $%
%TCIMACRO{\U{211d} }%
%BeginExpansion
\mathbb{R}
%EndExpansion
\rightarrow$ $%
%TCIMACRO{\U{211d} }%
%BeginExpansion
\mathbb{R}
%EndExpansion
$ is monotonically increasing, $\beta\left(  0\right)  =0$ and satisfies, for
some positive constants $C_{1},...,C_{5}$ and every $r,s\in\mathbb{R},$

\begin{itemize}
\item[$a)$] $\left\vert \beta\left(  r\right)  \right\vert \leq C_{1}%
\left\vert r\right\vert +C_{2}~;$

\item[$b)$] $\beta\left(  r\right)  r\geq C_{3}r^{2}-C_{4},$~$;$

\item[$c)$] $(\beta\left(  r\right)  -\beta\left(  s\right)  )(r-s)\geq
C_{5}(r-s)^{2}.$
\end{itemize}

Some particular forms of the operator $A$ can be enumerated below:$\smallskip$

\noindent$\left(  i\right)  $ the \textit{clipping} operator%
\[
A\left(  u\right)  :=\Delta u-\operatorname*{clip}(u),
\]
where%
\[
\operatorname*{clip}(u)=\max\left(  -M,\min\left(  u,M\right)  \right)  ,\quad
M>0\text{ fixed,}%
\]
is Lipschitz and of monotone type. This operator can be used in convex
optimization problems, constrained control problems and deep learning
modelling.$\smallskip$

\noindent$\left(  ii\right)  $ operator with Lipschitz reaction term:%
\[
A\left(  u\right)  :=\Delta u-f\left(  u\right)  ,
\]
where $f$ is Lipschitz or increasing, of the type $f(u)=\lambda u$ or
$f(u)=\alpha\tanh\left(  u\right)  ,$ $\alpha>0,$ which corresponds to a
diffusion with saturated reaction. Since $\tanh$ is monotone and globally
Lipschitz, it implies that%
\[
\left\langle A\left(  u\right)  ,u\right\rangle =\int\left\vert \nabla
u\right\vert ^{2}+\alpha\int\tanh\left(  u\right)  u\geq0.
\]

\noindent$\left(  iii\right)  $ nonlinear saturated operator:%
\[
A\left(  u\right)  :=\Delta u-\frac{u}{1+\left\vert u\right\vert };
\]

\noindent$\left(  iv\right)  $ Nemytskii type operator $A\left(  u\right)
:=\Delta u-g\left(  x,u\right)  ,$ where $g\left(  x,\cdot\right)  $ is
Lipschitz, uniformly in $x$ and monotone with respect to $u.$\bigskip

For the diffusion of the Eq.(\ref{Semilinear Dirichlet eq.}), the operator
$B:H_{0}^{1}\left(  \mathcal{O}\right)  \rightarrow L_{2}\left(  L^{2}\left(
\mathcal{O}\right)  ;L^{2}\left(  \mathcal{O}\right)  \right)  $ is assumed to
be linear from $H_{0}^{1}\left(  \mathcal{O}\right)  $ to the space of
Hilbert-Schmidt operators $L_{2}\left(  L^{2}\left(  \mathcal{O}\right)
;L^{2}\left(  \mathcal{O}\right)  \right)  $.
\end{example}

\begin{example}
[\textbf{Semi-linear Neumann equations}]$~\smallskip$

Let us consider now the Hilbert-Gelfand triple $H^{1}\left(  \mathcal{O}%
\right)  \subset L^{2}\left(  \mathcal{O}\right)  \subset\left(  H^{1}\left(
\mathcal{O}\right)  \right)  ^{\ast}$ and the semi-linear Neumann equation:%
\begin{equation}
\left\{
\begin{array}
[c]{ll}%
dX_{t}=\Delta X_{t}dt+B\left(  X_{t}\right)  dW_{t} & ,\text{ }in\text{
}\left(  0,\infty\right)  \times\mathcal{O},\medskip\\
\dfrac{\partial X_{t}}{\partial\nu}+\beta\left(  X_{t}\right)  =0 & ,\text{
}on\text{ }\left(  0,\infty\right)  \times\partial\mathcal{O},\medskip\\
X_{0}=x & ,\text{ }in\text{ }\mathcal{O},
\end{array}
\right.  \label{semin-linear Neumann eq}%
\end{equation}
where $\mathcal{O}$\ is a non-empty bounded open domain in $%
%TCIMACRO{\U{211d} }%
%BeginExpansion
\mathbb{R}
%EndExpansion
^{d}$, $d\geq0$, with smooth boundary. To comply with the general form
considered for the viability result, we consider the operator%
\[
A:H^{1}\left(  \mathcal{O}\right)  \rightarrow\left(  H^{1}\left(
\mathcal{O}\right)  \right)  ^{\ast}%
\]
which means, in variational form,%
\[
\left(  A\left(  u\right)  ,v\right)  _{L^{2}(\mathcal{O})}=-\int
\limits_{\mathcal{O}}\nabla u\nabla vd\xi-\int\limits_{\partial\mathcal{O}%
}\operatorname{Tr}\beta\left(  u\right)  \operatorname{Tr}vd\xi,\quad\forall
u,v\in H^{1}\left(  \mathcal{O}\right)  ,
\]
where we denoted by $\operatorname{Tr}$ the trace operator. Concerning
$\beta,$ it verifies exactly the same assumptions as in Example
\ref{Example semi-linear Dirichlet eq.}, above. We present below two
particular forms for (\ref{semin-linear Neumann eq}). For some given barriers
$a<b,$ we have:$\smallskip$

\noindent$\left(  i\right)  $ black body radiation (Stefan-Boltzmann radiation
law)%
\[
\beta(r):=ar^{4}+b,\quad\text{for }a\leq r\leq b~;
\]

\noindent$\left(  ii\right)  $ models of motion through permeable membranes%
\[
\beta(r):=\left\{
\begin{array}
[c]{rl}%
\alpha_{1}(r-a) & \text{,}\quad\text{if }r<a,\smallskip\\
0 & \text{,}\quad\text{if }a\leq r\leq b,\smallskip\\
\alpha_{2}(r-b) & \text{,}\quad\text{if }r>b,
\end{array}
\right.
\]

\bigskip For the diffusion part of (\ref{semin-linear Neumann eq}), the
operator $B:H^{1}\left(  \mathcal{O}\right)  \rightarrow L_{2}\left(
L^{2}\left(  \mathcal{O}\right)  ;L^{2}\left(  \mathcal{O}\right)  \right)  $
is assumed to be linear from $H^{1}\left(  \mathcal{O}\right)  $ to the space
of Hilbert-Schmidt operators $L_{2}\left(  L^{2}\left(  \mathcal{O}\right)
;L^{2}\left(  \mathcal{O}\right)  \right)  $.
\end{example}

\begin{example}
[\textbf{A Stochastic Navier-Stokes-type Equation}]\label{SNSE example}
Consider the Hilbert-Gelfand triple $V\subset H\subset V^{\ast},$ where%
\[
\left\{
\begin{array}
[c]{ccccl}%
V & := & H_{0,\sigma}^{1}\left(  D\right)  & = & \left\{  u\in L^{2}\left(
D;\mathbb{R}^{d}\right)  :\nabla\cdot u=0\text{ in }D,\text{ }u\cdot
n|_{\partial D}=0\right\}  ,\medskip\\
H & := & L_{\sigma}^{2}\left(  D\right)  & = & \left\{  u\in H_{0}^{1}\left(
D;\mathbb{R}^{d}\right)  :\nabla\cdot u=0\right\}  ,\medskip\\
V^{\ast} & := & H_{\sigma}^{-1}\left(  D\right)  , &  &
\end{array}
\right.
\]
where $D\subset\mathbb{R}^{2}$ is a bounded smooth domain and $\sigma$ denotes
divergence-free vector fields. Condition $\nabla\cdot u=0$ means that the
fluids' speed is incompressible, that is the fluid does not change, locally,
its volume.

In order to construct some Stochastic Navier-Stokes-like Equations (SNSEs),
which can be approached by our study, let us first remark that the dual
positivity condition asked by $\left(  H_{6}\right)  $ is the most delicate
one, since, generally, the Stokes operator is a dissipative one. Also, the
hypothesis $\left(  H_{5}\right)  $ requires only a local Lipschitz behavior
for $A.$ However, for classical SNSEs, if we want to verify the hypothesis
$\left(  H_{6}\right)  ,$ we must absorb the action of $\left(  u\cdot
\nabla\right)  u$ by the rest of the terms. This is the main reason why we
should consider a tamed SNSE-type as a relevant suitable example. In addition,
this model is driven by a global Lipschitz drift: given a positive cinematic
viscosity coefficient $\nu,$ let us consider the dynamic%
\[
dX_{t}=A_{\mathrm{reg}}\left(  X_{t}\right)  dt+B\left(  X_{t}\right)
dW_{t}~,\quad X_{0}=x\in H,
\]
where the regularized coefficient is given by%
\begin{equation}
A_{\mathrm{reg}}\left(  u\right)  :=-\nu Au-\chi_{R}\left(  \left\Vert
u\right\Vert _{V}\right)  \mathcal{B}\left(  u,u\right)  -g\left(  \left\Vert
u\right\Vert _{H}^{2}\right)  u~. \label{tamed SNSE A term}%
\end{equation}
Here, the Stokes operator is, for every $u,v\in V,$%
\[
_{V^{\ast}}\left\langle Au,v\right\rangle _{V}=\left\langle \nabla u,\nabla
v\right\rangle _{H},\quad i.e.\quad A:=-P\Delta,
\]
with $P$ being the Leray projection and $\mathcal{B}$ is the convective
bilinear operator of the tamed SNSE%
\[
\mathcal{B}\left(  u,v\right)  :=P((u\cdot\nabla)v),
\]
which, used on its diagonal, produces the main nonlinearity of the equation.
The regularization cutoff function $\chi_{R}\in C^{1}\left(  [0,\infty
);[0,1]\right)  $ verifies that $\chi_{R}(r)=1,$ for $r\leq R,$ and $\chi
_{R}(r)=0,$ for $r\geq R+1,$ while $g:[0,\infty)\rightarrow\lbrack0,\infty)$
is bounded, globally Lipschitz, and nonnegative.$\smallskip$

The diffusion is of linear type, $B :=\Sigma u,$ with bounded $\Sigma\in
L(H,L_{2}(U,H)).$

The interested reader can consult in Appendix \ref{Appendix} the details which
proves that the working assumptions are verified for the above problem.
\end{example}

\begin{example}
[\textbf{A porous media model}]Consider, the Hilbert-Gelfand triple $V\subset
H\subset V^{\ast},$ with continuous and dense embeddings, where%
\[
V=L^{2}(\mathcal{O}),\quad H=H^{-1}(\mathcal{O}),\quad V^{\ast}=\left(
L^{2}\left(  \mathcal{O}\right)  \right)  ^{\ast},
\]
with $\mathcal{O}$ being a nonempty bounded domain from $\mathbb{R}^{d};$
$\left(  L^{2}\left(  \mathcal{O}\right)  \right)  ^{\ast}$ should be
understood as the dual of $L^{2}(\mathcal{O}),$ but in the topology of
$H^{-1}(\mathcal{O}),$ which is not the same as $L^{2}\left(  \mathcal{O}%
\right)  ,$ as one can see in Pr\'{e}vot, R\"{o}ckner \cite[Remark
4.1.14.]{Prevot/Rockner:07}. More precisely,%
\[
\left(  L^{2}\left(  \mathcal{O}\right)  \right)  ^{\ast}=\Delta\left(
L^{2}\left(  \mathcal{O}\right)  \right)  \neq L^{2}\left(  \mathcal{O}%
\right)  .
\]
We construct now a porous-media model:%
\[
dX_{t}=A\left(  X_{t}\right)  dt+B\left(  X_{t}\right)  dW_{t}~,\quad
X_{0}=x\in H\cap L^{1}\left(  \mathcal{O}\right)  ,
\]
where%
\[
A\left(  u\right)  :=\Delta\beta\left(  u\right)  ,
\]
for $\beta:\mathbb{R}\rightarrow\mathbb{R}$ chosen as in Example
\ref{Example semi-linear Dirichlet eq.}.

One can easily see that the usual assumptions at verified in the classical
way. Concerning the assumption $\left(  H_{6}\right)  $ we have
\[
(-A(u),J_{H}(v))_{H^{-1}(\mathcal{O})}=(-\Delta\beta(u),-\Delta(v))_{H^{-1}%
(\mathcal{O})}=(-\Delta\beta(u),v))_{L^{2}(\mathcal{O})}=(\nabla
\beta(u),\nabla v))_{L^{2}(\mathcal{O})}\geq0.
\]

Also, $B\left(  u\right)  :=\Sigma u,$ with bounded $\Sigma\in L(H,L_{2}%
(U,H)).$ It is easy to verify that hypotheses $\left(  H_{1}\right)  $ to
$\left(  H_{7}\right)  $ are verified.
\end{example}

\section{Main viability results\label{Main results}}

We first define the constraints as nonempty closed subsets $K_{\omega}\subset
V$, where the set valued map
\[
K:\omega\in\Omega\rightrightarrows K_{\omega}%
\]
is assumed to be $\mathcal{F}_{0}$\ - measurable, which means that $K$ is a
random set-valued variable. Let us introduce the subset $\mathcal{K}$ given by%
\[
\mathcal{K}:=\left\{  u\in L^{2}\left(  \Omega;V\right)  :\text{ for almost
all }\omega\in\Omega\text{, }u\left(  \omega\right)  \in K_{\omega}\right\}
.
\]
We define the \textit{stochastic viability property} of $K$ with respect to
the pair of operators $\left(  A,B\right)  $ driven equation
(\ref{GeneralFormEq.}) as follows:

\begin{definition}
We say that $K$ is \textbf{viable}\ with respect to the pair of operators
$\left(  A,B\right)  $ if, for every $t\geq0$ and any $\mathcal{F}_{t}-$random
variable $x\in K,$ there exists a stochastic process $X:=(X_{s}^{t,x})_{s\geq
t}$ which is a solution of the SDE (\ref{GeneralFormEq.}) starting from $x$,
such that $X$ is viable in $K$. More precisely,
\[
\forall~0\leq t\leq s\leq T,\quad X_{s}^{t,x}\left(  \omega\right)  \in
K\left(  \omega\right)  ,\quad\mathbb{P-}a.e\text{ ~}\omega\in\Omega.
\]

\end{definition}

In order to give a Nagumo type characterization of the viability property, we
need first to adapt the concepts of \textit{variational tangent set
}and\textit{ variational contingent set} to the situation of stochastic
partial differential equation in variational formulation.

\begin{definition}
\label{defS}Let us consider an arbitrarily fixed $t\geq0$ and $x\in K$ an
$\mathcal{F}_{t}$ - random variable from $V$. We define the $\mathcal{F}_{t}%
-$measurable \textbf{variational tangent set} $\mathcal{S}_{K}\left(
t,x\right)  $ to $K$ at $x$ as being the set of $\mathcal{F}_{t}$ - random
variable pairs $\left(  \gamma,v\right)  $ from $V^{\ast}\times V^{\ast}$
satisfying the following properties: there exist two adapted continuous
stochastic processes $\alpha$ and $\beta$ with values in $V^{\ast}$ and
respectively in $L_{2}\left(  U;H\right)  $ satisfying, for some fixed
$0<\delta<\frac{1}{2}$%
\[
\lim_{h\rightarrow0}\mathbb{E}\left(  \sup_{r\in\left[  t,t+h\right]
}\left\Vert \alpha\left(  r\right)  \right\Vert _{V^{\ast}}^{2}\right)  =0
\]%
\[
\lim_{h\rightarrow0}\mathbb{E}\left(  \sup_{r\in\left[  t,t+h\right]
}\left\Vert \frac{1}{h^{\delta}}\beta\left(  r\right)  \right\Vert
_{L_{2}\left(  U;H\right)  }^{2}\right)  =0
\]
and%
\[
x+\int_{t}^{t+h}\left(  \gamma+\alpha(s)\right)  ds+\int_{t}^{t+h}\left(
v+\beta\left(  s\right)  \right)  dW_{s}\in\mathcal{K}.
\]
for $h$ small enough.
\end{definition}

\begin{definition}
\label{defT}Let us consider an arbitrarily fixed $t\geq0$ and $x\in K$ an
$\mathcal{F}_{t}$ - random variable from $V$. We define the $\mathcal{F}_{t}%
-$measurable \textbf{variational contingent set} $\mathcal{T}_{K}\left(
t,x\right)  $ to $K$ at $x$ as being the set of $\mathcal{F}_{t}$ - random
variable pairs $\left(  \gamma,v\right)  $ from $V^{\ast}\times V^{\ast}$
satisfying the following properties: there exist a sequence $\left(
h_{n}\right)  _{n\in\mathbb{N}}\subset\mathbb{R}_{+}$ and the corresponding
sequences of $\mathcal{F}_{t+h_{n}}$ - random variables $\left(  \alpha
_{n}\right)  _{n\in\mathbb{N}}$ and $\left(  \beta_{n}\right)  _{n\in
\mathbb{N}},$ such that $h_{n}\rightarrow0$ as $n\rightarrow+\infty$ and%
\begin{equation}
\left\{
\begin{array}
[c]{cl}%
i) & \mathbb{E}\left(  \left\Vert \alpha_{n}\right\Vert _{V^{\ast}}%
^{2}\right)  \rightarrow0\smallskip\\
ii) & \mathbb{E}\left(  \left\Vert \beta_{n}\right\Vert _{H}^{2}\right)
\rightarrow0\text{ }\smallskip\\
iii) & \mathbb{E}\left(  \beta_{n}\right)  =0\smallskip\\
iv) & \beta_{n}\text{ independent of }\mathcal{F}_{t},
\end{array}
\right.  \label{ConditiiAlfaBeta}%
\end{equation}
and satisfying, for every $n\in\mathbb{N},$ the condition%
\[
x+\gamma h_{n}+v\left(  W_{t+h_{n}}-W_{t}\right)  +h_{n}\alpha_{n}%
+h_{n}^{\frac{1}{2}+\delta}\beta_{n}\in\mathcal{K}.
\]

\end{definition}

We can give now the main result of this paper.

\begin{theorem}
\label{main result}Let $K$ be a nonempty closed subsets of $V$. We take $A$
and $B$ the operators from equation (\ref{GeneralFormEq.}) and we assume all
the hypotheses above satisfied. Then the following conditions are equivalent.

\begin{itemize}
\item[$\mathit{i)}$] The subset $K$ is viable with respect to the pair of
operators $\left(  A,B\right)  .$

\item[$\mathit{ii)}$] For every $t\in\left[  0,T\right]  $ and for every
$\mathcal{F}_{t}$ - random variable $x\in L^{2}\left(  \Omega;V\right)  $ from
$K$ we have that%
\[
\left(  A\left(  x\right)  ,B\left(  x\right)  \right)  \in\mathcal{S}%
_{K}\left(  t,x\right)  .
\]

\item[$\mathit{iii)}$] For every $t\in\left[  0,T\right]  $ and for every
$\mathcal{F}_{t}$ - random variable $x\in L^{2}\left(  \Omega;V\right)  $ from
$K$ we have that%
\[
\left(  A\left(  x\right)  ,B\left(  x\right)  \right)  \in\mathcal{T}%
_{K}\left(  t,x\right)  .
\]

\end{itemize}
\end{theorem}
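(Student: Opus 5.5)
We prove the cycle of implications $i)\Rightarrow ii)\Rightarrow iii)\Rightarrow i)$, following the scheme of Aubin and Da Prato.

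\emph{Step $i)\Rightarrow ii)$.} Fix $t$ and an $\mathcal{F}_{t}$-measurable $x\in K$ with $x\in L^{2}(\Omega;V)$, and let $X=X^{t,x}$ be the viable solution given by the existence theorem of Pardoux. Write
\[
X_{t+h}=x+\int_{t}^{t+h}\left(A(x)+\alpha(s)\right)ds+\int_{t}^{t+h}\left(B(x)+\beta(s)\right)dW_{s},
\]
with $\alpha(s):=A(X_{s})-A(x)$ and $\beta(s):=B(X_{s})-B(x)=B(X_{s}-x)$, the last equality coming from the linearity in $(H_{7})$. Since $X_{t+h}\in\mathcal{K}$, it remains to check the two limits in Definition \ref{defS}. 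For $\alpha$ we use $(H_{5})$: on the event where $X$ stays in a $V$-ball of radius $R$ we have $\Vert\alpha(s)\Vert_{V^{\ast}}\le L_{A,R}\Vert X_{s}-x\Vert_{V}$. We therefore need a uniform $V$-bound on $X$ and $V$-continuity of $X$ at time $t$. We obtain these by applying It\^{o}'s formula to $\Vert (J_{H}^{\varepsilon})^{1/2}X_{s}\Vert_{H}^{2}$, killing the drift contribution with the dual positivity $(H_{6})$ and controlling the noise with $(H_{7})$, and then letting $\varepsilon\to0$ via (\ref{norm equality}). This gives $\mathbb{E}\sup_{s}\Vert X_{s}\Vert_{V}^{2}\le C\,\mathbb{E}\Vert x\Vert_{V}^{2}$. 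Combined with continuity in $H$ and interpolation or localization, it yields $\mathbb{E}\sup_{[t,t+h]}\Vert X_{s}-x\Vert_{V}^{2}\to0$; the tails outside the ball are handled by $(H_{4})$ and uniform integrability. For $\beta$ we need $h^{-2\delta}\mathbb{E}\sup_{[t,t+h]}\Vert X_{s}-x\Vert_{H}^{2}\to0$. This follows from the standard estimate $\mathbb{E}\sup_{[t,t+h]}\Vert X_{s}-x\Vert_{H}^{2}\le Ch$, obtained from It\^{o}'s formula in $H$ together with $(H_{3})$, $(H_{4})$ and the $V$-bound, and from $\delta<\tfrac12$.

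\emph{Step $ii)\Rightarrow iii)$.} Given $\alpha,\beta$ as in Definition \ref{defS} and any $h_{n}\downarrow0$, we set
\[
\alpha_{n}:=\frac{1}{h_{n}}\int_{t}^{t+h_{n}}\alpha(s)\,ds,\qquad \beta_{n}:=h_{n}^{-\frac12-\delta}\int_{t}^{t+h_{n}}\beta(s)\,dW_{s}.
\]
Then $\mathbb{E}\Vert\alpha_{n}\Vert_{V^{\ast}}^{2}\le\mathbb{E}\sup\Vert\alpha\Vert^{2}\to0$ and, by the It\^{o} isometry, $\mathbb{E}\Vert\beta_{n}\Vert_{H}^{2}\le h_{n}^{-2\delta}\mathbb{E}\sup\Vert\beta\Vert^{2}\to0$. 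The centering $\mathbb{E}\beta_{n}=0$ holds automatically. Independence from $\mathcal{F}_{t}$ does not hold in general. We repair this by working conditionally on $\mathcal{F}_{t}$, or, as in Aubin and Da Prato, by replacing $\beta_{n}$ with its part that is orthogonal to $\mathcal{F}_{t}$ and absorbing the remainder into $\alpha_{n}$.

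\emph{Step $iii)\Rightarrow i)$.} This is the main obstacle. Our plan is to use the distance function $d_{K}$ in $H$ (or $V$) and to show that $\varphi(s):=\mathbb{E}\,d_{K}^{2}(X_{s})$ satisfies $\varphi(s)=0$. For $s\ge t$, we set $y=X_{s}$ and choose a projection $\pi(y)\in K$, made $\mathcal{F}_{s}$-measurable through a measurable selection for the $\mathcal{F}_{0}$-measurable $K$. Applying $iii)$ at the point $\pi(y)$, we compare $X_{s+h_{n}}$ with the point
\[
\pi(y)+A(\pi(y))h_{n}+B(\pi(y))(W_{s+h_{n}}-W_{s})+h_{n}\alpha_{n}+h_{n}^{\frac12+\delta}\beta_{n}\in\mathcal{K}.
\]
Using $(H_{2})$, $(H_{5})$, $(H_{7})$ and the independence and zero mean of $\beta_{n}$, which removes the cross terms with the $\mathcal{F}_{s}$-measurable quantities, we derive
\[
\varphi(s+h_{n})\le(1+Ch_{n})\varphi(s)+o(h_{n}).
\]
The $o(h_{n})$ term relies on the $V$-bounds from the first step in order to make $A(X)$ locally Lipschitz. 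This lower Dini-derivative inequality then gives $\varphi\le e^{C(s-t)}\varphi(t)=0$ via a Gronwall-type lemma for functions that are only right-lower-semicontinuous. The delicate point is controlling $A(X_{r})-A(\pi(y))$ in $V^{\ast}$ when the distance is measured in $H$; we plan to use the monotonicity $(H_{2})$ rather than Lipschitz bounds on the drift difference. Closedness of $K$, together with the continuity of $X$ in $H$, then gives $X_{s}\in K$ for all $s$ almost surely.
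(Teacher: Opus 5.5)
Your step $iii)\Rightarrow i)$ has a genuine gap. The inequality $\varphi(s+h_n)\le(1+Ch_n)\varphi(s)+o(h_n)$ cannot be obtained with the tools you list, because the norms do not match.

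Hypothesis $iii)$ at $\pi(y)$ gives the comparison point $Z_n=\pi(y)+A(\pi(y))h_n+B(\pi(y))(W_{s+h_n}-W_s)+h_n\alpha_n+h_n^{\frac12+\delta}\beta_n\in K$, but it controls $Z_n$ only up to errors that are small in $V^{\ast}$. The drift pieces $h_nA(\pi(y))$, $\int_s^{s+h_n}(A(X_r)-A(\pi(y)))dr$ and $h_n\alpha_n$ need not lie in $H$ at all.

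\emph{Distance in $H$ (a fortiori in $V$).} The quadratic term in the expansion of $\Vert X_{s+h_n}-Z_n\Vert_H^2$ is then out of reach. Nothing in $iii)$ prevents $h_n\alpha_n$ from being a highly oscillating element of $V$ with $\Vert h_n\alpha_n\Vert_{V^\ast}=o(h_n)$ but $\Vert h_n\alpha_n\Vert_H$ of order one. In that case $Z_n$ says nothing about the $H$-distance from $X_{s+h_n}$ to $K$.

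\emph{Distance in $V^\ast$, as in the paper.} The first-order cross term becomes the $V^\ast$-inner product $\langle X_s-\pi(y),A(X_s)-A(\pi(y))\rangle_{V^\ast}$. The monotonicity $(H_2)$ is a $V^\ast$--$V$ duality statement with an $H$-norm on the right, so it does not apply. $(H_5)$ only gives the mixed bound $L_{A,R}\Vert X_s-\pi(y)\Vert_{V^\ast}\Vert X_s-\pi(y)\Vert_V$, and the noise term is bounded by $M_B\Vert X_s-\pi(y)\Vert_H^2$. No Gronwall inequality for $\mathbb{E}d_K^2$ in $V^\ast$ closes.

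The paper never compares $A(X_t)$ with $A(y)$. It replaces $(A,B)$ by $(A\circ\pi_K,B\circ\pi_K)$, so that at $y\in\Pi_K(X_t)$ it may take $(\gamma,\nu)=(\tilde A(X_t),\tilde B(X_t))=(A(y),B(y))\in\mathcal{T}_K(t,y)$. The leading terms $2\mathbb{E}\langle X_t-y,\tilde A(X_t)-\gamma\rangle_{V^\ast}$ and $3\mathbb{E}\Vert\tilde B(X_t)-\nu\Vert^2_{L_2(U;H)}$ of its $V^\ast$-expansion then vanish identically. Hence the contingent epiderivative of $\phi(t)=\mathbb{E}d_K^2(X_t)$ is nonpositive, and viability for the extended pair transfers back to $(A,B)$ because the two pairs agree on $K$. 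Separately, your last sentence fails as stated: $d_K(X_s)=0$ in the $H$-norm only gives $X_s\in\overline{K}^{H}$, and $K$ is assumed closed only in $V$.

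Two further steps do not work as written.

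In $i)\Rightarrow ii)$, a bound on $\mathbb{E}\sup_s\Vert X_s\Vert_V^2$ together with continuity in $H$ gives only weak continuity in $V$. By interpolation you also get strong continuity in the intermediate spaces between $H$ and $V$. Neither gives $\sup_{[t,t+h]}\Vert X_s-x\Vert_V\to0$, which is exactly what $(H_5)$ needs to make $\mathbb{E}\sup_{[t,t+h]}\Vert\alpha\Vert_{V^\ast}^2\to0$. Localization and uniform integrability only help once this pathwise convergence is known. The paper relies instead on the $V$-increment estimate (\ref{marg-dif}) of Lemma \ref{estim1}. Your handling of $\beta$ through $H$-increments is fine, and actually requires less than the paper's.

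In $ii)\Rightarrow iii)$, you correctly observe that $\beta_n$ need not be independent of $\mathcal{F}_t$; the paper simply asserts this independence. However, your repair does not produce condition $iv)$ of (\ref{ConditiiAlfaBeta}). Since $\beta_n$ is a stochastic integral started at $t$, $\mathbb{E}(\beta_n|\mathcal{F}_t)=0$ already. So its part orthogonal to $L^2(\Omega,\mathcal{F}_t;H)$ is $\beta_n$ itself, and this orthogonality is strictly weaker than independence. Working conditionally on $\mathcal{F}_t$ amounts to replacing $iv)$ by $\mathbb{E}(\beta_n|\mathcal{F}_t)=0$. That is all the sufficiency direction uses, but it proves the theorem for a modified Definition \ref{defT}, not the stated one.
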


\section{Proofs of the main results\label{Proofs}}

In order to prove the main result we need first to show the following
important estimates from the key Lemma \ref{estim1}. In what follows, we
denote by $C$ a generic positive constant, which can vary from one line to
another if there isn't necessary to provide its explicit dependence on some
other constants involved. Whenever its exact form is mandatory, we will
precisely mention it.

\begin{lemma}
\label{estim1}Under the assumptions from Section \ref{Framework}, we have the
following estimates for the solution of equation (\ref{GeneralFormEq.}):%
\begin{equation}
\mathbb{E}\left(  \underset{r\in\left[  0,T\right]  }{\sup}\left\Vert
X_{r}\right\Vert _{V}^{2}\right)  \leq C, \label{marg-sol}%
\end{equation}%
\begin{equation}
\mathbb{E}\left(  \underset{r\in\left[  t,t+h\right]  }{\sup}\left\Vert
X_{r}-X_{t}\right\Vert _{V}^{2}\right)  \leq Ch,\quad h>0,\text{ fixed,}
\label{marg-dif}%
\end{equation}
where $C$ is a $h-$independent positive constant.
\end{lemma}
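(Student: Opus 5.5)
The plan is to apply It\^{o}'s formula to the functional $\Phi_\varepsilon(y):=\langle J_{H}^{\varepsilon}(y),y\rangle_{H}=\Vert (J_{H}^{\varepsilon})^{1/2}y\Vert_{H}^{2}$. This functional is a regularized version of $\Vert y\Vert_{V}^{2}$ in the sense of (\ref{norm equality}). Since $J_{H}^{\varepsilon}$ is a bounded, self-adjoint, positive linear operator on $H$ (linearity being inherited from the linearity of the Riesz map $J$ on the Hilbert space $V$), $\Phi_\varepsilon$ is a bounded quadratic form on $H$. So the It\^{o} formula for variational solutions (Pardoux, Pr\'{e}v\^{o}t--R\"{o}ckner, Theorem 4.2.5 type) applies to the solution $X$ given by Pardoux's Theorem 1.1, with drift term $2\,{}_{V^*}\langle A(X_s),J_H^\varepsilon X_s\rangle_V$. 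To make this rigorous I would first check that $J_H^\varepsilon$ maps $V$ into $V$, as the resolvent $(1+\varepsilon J_H)^{-1}$ does. Alternatively, I would work with a Galerkin approximation of $X$ and pass to the limit afterwards. Either way one obtains
\[
\Phi_\varepsilon(X_r)=\Phi_\varepsilon(X_0)+2\int_0^r {}_{V^*}\langle A(X_s),J_H^\varepsilon X_s\rangle_V\,ds+\int_0^r \mathrm{Tr}\big(B(X_s)^*J_H^\varepsilon B(X_s)\big)ds+M_r^\varepsilon,
\]
where $M^\varepsilon_r=2\int_0^r\langle J_H^\varepsilon X_s,B(X_s)dW_s\rangle_H$.

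The drift term is nonpositive by the dual positivity $(H_6)$, extended by density/hemicontinuity $(H_1)$ from the set where $A(u)\in H$ to all of $V$. The It\^{o} correction term is where $(H_7)$ enters. It is the main obstacle, because $(H_7)$ controls $B$ only in $L_2(U,H)$, whereas we need $\mathrm{Tr}(B^*J_H^\varepsilon B)\le C\,\Phi_\varepsilon(X_s)$ uniformly in $\varepsilon$, which is essentially a bound on $B(x)$ in $L_2(U,V)$ by $\Vert x\Vert_V$. I would try first to use the linearity of $B$ together with a commutation or compatibility of $B$ with $(1+\varepsilon J_H)^{-1}$ (true in the examples where $\Sigma$ acts nicely). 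Failing that, I would read $(H_7)$ at the level of the regularized norm, i.e. $\Vert (J_H^\varepsilon)^{1/2}B(v)\Vert_{L_2}^2\le M_B\Phi_\varepsilon(v)$; I would state this explicitly as the interpretation needed.

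With this bound in hand, I take the supremum over $r\in[0,T]$ and expectations. The martingale term is handled by the Burkholder--Davis--Gundy inequality:
\[
\mathbb{E}\sup_{r\le T}|M_r^\varepsilon|\le C\,\mathbb{E}\Big(\int_0^T\Phi_\varepsilon(X_s)\,M_B\Phi_\varepsilon(X_s)ds\Big)^{1/2}\le \tfrac12\mathbb{E}\sup_{r\le T}\Phi_\varepsilon(X_r)+C\int_0^T\mathbb{E}\Phi_\varepsilon(X_s)ds.
\]
Absorbing the first term on the left-hand side (after a stopping-time localization so that everything is finite) and applying Gronwall's lemma gives $\mathbb{E}\sup_{r\le T}\Phi_\varepsilon(X_r)\le C\,\mathbb{E}\Vert X_0\Vert_V^2$, with $C$ independent of $\varepsilon$. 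Letting $\varepsilon\to0$, monotone convergence of $\Phi_\varepsilon(y)\uparrow\Vert y\Vert_V^2$ (a standard property of Yosida approximants of a positive self-adjoint operator) and Fatou's lemma yield (\ref{marg-sol}), for $X_0\in L^2(\Omega;V)$.

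For (\ref{marg-dif}), I repeat the argument for the increment $Y_r:=X_r-X_t$, $r\in[t,t+h]$, with $Y_t=0$. It\^{o}'s formula now gives the drift $2\langle A(X_s),J_H^\varepsilon Y_s\rangle$, which no longer has a sign. I bound it by $2\Vert A(X_s)\Vert_{V^*}\Vert Y_s\Vert_V$. By $(H_4)$ (here with $\alpha=2$, since we are in the Hilbert setting) and (\ref{marg-sol}), this is at most $C(\sup\Vert X\Vert_V^2)$, so its integral over $[t,t+h]$ has expectation $\le Ch$. The correction term is $\le M_B\Vert X_s\Vert_V^2$, again contributing $\le Ch$. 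The BDG term is absorbed exactly as before. This gives $\mathbb{E}\sup_{[t,t+h]}\Vert Y_r\Vert_V^2\le Ch$ with $C$ depending only on $T$, the constants of the hypotheses and $\mathbb{E}\Vert X_0\Vert_V^2$, hence independent of $h$.
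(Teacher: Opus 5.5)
Your argument for the first estimate is the paper's argument: It\^{o}'s formula for $\Vert (J_H^\varepsilon)^{1/2}\cdot\Vert_H^2$, then $(H_6)$ to discard the drift, $(H_7)$ for the correction term, Burkholder--Davis--Gundy, Gronwall, and $\varepsilon\to0$. The reading $\Vert (J_H^\varepsilon)^{1/2}B(v)\Vert_{L_2(U;H)}^2\le M_B\Vert (J_H^\varepsilon)^{1/2}v\Vert_H^2$ that you make explicit is exactly what the paper uses tacitly when it invokes ``linearity of $B$'' to replace $(J_H^\varepsilon)^{1/2}B(X)$ by $B((J_H^\varepsilon)^{1/2}X)$. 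A minor point: hemicontinuity alone cannot extend $(H_6)$ off the set $\{u\in D(J_H):A(u)\in H\}$; you would need that set to be dense in $V$, together with the continuity coming from $(H_5)$.

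The genuine gap is in the increment estimate. The bound $|{}_{V^*}\langle A(X_s),J_H^\varepsilon Y_s\rangle_V|\le\Vert A(X_s)\Vert_{V^*}\Vert Y_s\Vert_V$ is false uniformly in $\varepsilon$. In the spectral calculus of $J_H$, $J_H^\varepsilon$ is multiplication by $\lambda/(1+\varepsilon\lambda)$. It is therefore uniformly bounded from $V$ to $V^*$, but $\Vert J_H^\varepsilon y\Vert_V^2=\int\lambda^3(1+\varepsilon\lambda)^{-2}\,d\mu_y(\lambda)$ is only bounded by $\varepsilon^{-2}\Vert y\Vert_V^2$. So $J_H^\varepsilon Y_s$ sits at the $V^*$ level uniformly in $\varepsilon$, just like $A(X_s)$. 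Formally, the drift tends to $2\langle A(X_s),X_s-X_t\rangle_V$, a $V$-inner product that needs $A(X_s)\in V$, whereas $(H_4)$ only controls $A$ in $V^*$. The correction and martingale terms in your increment argument are fine; the drift is the obstruction.

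This cannot be repaired by a sharper estimate, and the paper does not fill the gap either. It writes the drift of $\varphi_\varepsilon(X_r-X_t)$ as $\langle J_H^\varepsilon(X_u),A(X_u)\rangle_H$ and discards it by $(H_6)$. The correct term is $\langle J_H^\varepsilon(X_u-X_t),A(X_u)\rangle_H$, which has no sign, as you rightly noticed.

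In fact (\ref{marg-dif}) fails under $(H_1)$--$(H_7)$. Take $V=H_0^1(0,1)$ with $\Vert u\Vert_V=\Vert u'\Vert_{L^2}$, $H=L^2(0,1)$, $A=\Delta$ and $B=0$. Here $J_H=-\Delta$, and $(H_6)$ holds because $\langle J_Hu,J_H^\varepsilon u\rangle_H\ge0$. Let $\lambda_k=\pi^2k^2$ be the eigenvalues with eigenvectors $e_k$, and set $x=\sum_kx_ke_k$ with $\lambda_kx_k^2=k^{-1-2\delta}$, $0<\delta<1$. Then $x\in V$, but $X_h=e^{h\Delta}x$ satisfies
\[
\Vert X_h-x\Vert_V^2=\sum_k\lambda_k\left(1-e^{-h\lambda_k}\right)^2x_k^2\geq\left(1-e^{-1}\right)^2\sum_{\lambda_k\geq1/h}k^{-1-2\delta}\geq c\,h^{\delta},
\]
with $c>0$ independent of $h\in(0,1]$, and this is not $O(h)$. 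The second estimate therefore needs an extra regularity input. In this example $x\in D(J_H)$ suffices, since then $\Vert X_h-x\Vert_V^2\le h\Vert J_Hx\Vert_H^2$. No argument that controls the drift only through $\Vert A(X_s)\Vert_{V^*}$ can give it.
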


\begin{proof}
[Proof of Lemma \ref{estim1}]In order to get the estimates, let us remind that
the restricted projection $J_{H}$ defined in Remark
\ref{remark 1 spaces projection}\ is $m-$ accretive in $H\times H$ and
consider its Yosida approximation $J_{H}^{\varepsilon}$, together with its
square root operator, $\left(  J_{H}^{\varepsilon}\right)  ^{1/2}$.

For the first estimate, we need to apply It\^{o}'s formula to the process%
\[
X_{t}^{0,x}=x+\int_{0}^{t}A(X_{r}^{0,x})dr+\int_{0}^{t}B(X_{r}^{0,x})dW_{r}~,
\]
for the function $\varphi_{\varepsilon}:H\rightarrow\mathbb{R},$ given by the
formula%
\begin{equation}
\varphi_{\varepsilon}\left(  x\right)  =\varphi(\left(  J_{H}^{\varepsilon
}\right)  ^{1/2}\left(  x\right)  )=\dfrac{1}{2}\left\Vert \left(
J_{H}^{\varepsilon}\right)  ^{1/2}\left(  x\right)  \right\Vert _{H}^{2},
\label{definition of fi}%
\end{equation}
with $\varphi:H\rightarrow\mathbb{R},$ $\varphi\left(  x\right)  =\frac{1}%
{2}\left\Vert x\right\Vert _{H}^{2}.$ In order to make the below estimates
well-defined, we point out that%
\[
B\left(  X\right)  :U\rightarrow H,\quad\left(  B\left(  X\right)  \right)
^{\ast}:H\rightarrow U,\quad\left(  J_{H}^{\varepsilon}\right)  ^{1/2}%
:H\rightarrow H
\]
and $\left(  J_{H}^{\varepsilon}\right)  ^{1/2}\left(  B\left(  X\right)
\right)  :U\rightarrow H$ is a Hilbert-Schmidt operator, this being useful
when we invoke the Burkholder--Davis--Gundy inequality. Indeed, given a basis
$\{g_{k}\}_{k=\overline{1,\infty}}$ in $U$,%
\[%
\begin{array}
[c]{ccl}%
\left\Vert \left(  J_{H}^{\varepsilon}\right)  ^{1/2}\left(  B\left(
X\right)  \right)  \right\Vert _{L_{2}\left(  U;H\right)  }^{2} & = &
%TCIMACRO{\dsum \limits_{k=1}^{\infty}}%
%BeginExpansion
{\displaystyle\sum\limits_{k=1}^{\infty}}
%EndExpansion
\left\Vert \left(  J_{H}^{\varepsilon}\right)  ^{1/2}\left(  B\left(
X\right)  \left(  g_{k}\right)  \right)  \right\Vert _{H}^{2}\medskip\\
& = &
%TCIMACRO{\dsum \limits_{k=1}^{\infty}}%
%BeginExpansion
{\displaystyle\sum\limits_{k=1}^{\infty}}
%EndExpansion
\left\langle \left(  J_{H}^{\varepsilon}\right)  ^{1/2}\left(  B\left(
X\right)  \left(  g_{k}\right)  \right)  ,\left(  J_{H}^{\varepsilon}\right)
^{1/2}\left(  B\left(  X\right)  \left(  g_{k}\right)  \right)  \right\rangle
_{H}\medskip\\
& = &
%TCIMACRO{\dsum \limits_{k=1}^{\infty}}%
%BeginExpansion
{\displaystyle\sum\limits_{k=1}^{\infty}}
%EndExpansion
\left\langle J_{H}^{\varepsilon}(B\left(  X\right)  (g_{k})),B\left(
X\right)  \left(  g_{k}\right)  \right\rangle _{H}\medskip\\
& \leq &
%TCIMACRO{\dsum \limits_{k=1}^{\infty}}%
%BeginExpansion
{\displaystyle\sum\limits_{k=1}^{\infty}}
%EndExpansion
\left\Vert J_{H}^{\varepsilon}(B\left(  X\right)  (g_{k})\right\Vert _{H}%
^{2}\left\Vert B\left(  X\right)  (g_{k})\right\Vert _{H}^{2}.
\end{array}
\]
Since $J_{H}^{\varepsilon}$ is $1$/$\varepsilon-$Lipschitz, we obtain that,
for each fixed $\varepsilon$,%
\[
\left\Vert \left(  J_{H}^{\varepsilon}\right)  ^{1/2}\left(  B\left(
X\right)  \right)  \right\Vert _{L_{2}\left(  U;H\right)  }^{2}\leq\dfrac
{1}{C_{\varepsilon}}%
%TCIMACRO{\dsum \limits_{k=1}^{\infty}}%
%BeginExpansion
{\displaystyle\sum\limits_{k=1}^{\infty}}
%EndExpansion
\left\Vert B\left(  X\right)  (g_{k})\right\Vert _{H}^{2}=\dfrac
{1}{C_{\varepsilon}}\left\Vert B\left(  X\right)  \right\Vert _{L_{2}\left(
U;H\right)  }^{2}\leq C\left(  M_{B},1/\varepsilon\right)
\]

The first and second order differentials are given, respectively, by%
\[
\left\{
\begin{array}
[c]{rcl}%
\left\langle D\varphi_{\varepsilon}\left(  x\right)  ,y\right\rangle _{H} &
= & \left\langle D\varphi\left(  \left(  J_{H}^{\varepsilon}\right)
^{1/2}\left(  x\right)  \right)  ,\left(  J_{H}^{\varepsilon}\right)
^{1/2}\left(  y\right)  \right\rangle _{H}=\left\langle \left(  J_{H}%
^{\varepsilon}\right)  ^{1/2}\left(  x\right)  ,\left(  J_{H}^{\varepsilon
}\right)  ^{1/2}\left(  y\right)  \right\rangle _{H}~,\medskip\\
\left\langle D^{2}\varphi_{\varepsilon}\left(  x\right)  h,k\right\rangle _{H}
& = & D^{2}\varphi\left(  \left(  J_{H}^{\varepsilon}\right)  ^{1/2}\left(
x\right)  \right)  \left\langle \left(  J_{H}^{\varepsilon}\right)
^{1/2}\left(  h\right)  ,\left(  J_{H}^{\varepsilon}\right)  ^{1/2}\left(
k\right)  \right\rangle _{H}\medskip\\
& = & \left\langle \left(  J_{H}^{\varepsilon}\right)  ^{1/2}\left(  h\right)
,\left(  J_{H}^{\varepsilon}\right)  ^{1/2}\left(  k\right)  \right\rangle
_{H}~,
\end{array}
\right.
\]
for every $x,h,k\in H.$ We can observe that%
\[%
\begin{array}
[c]{l}%
\operatorname{Tr}\left[  B^{\ast}\left(  X\right)  D^{2}\varphi_{\varepsilon
}\left(  X\right)  B\left(  X\right)  \right]  \smallskip\\
\quad\quad\quad=%
%TCIMACRO{\dsum \limits_{k=1}^{\infty}}%
%BeginExpansion
{\displaystyle\sum\limits_{k=1}^{\infty}}
%EndExpansion
\left\langle D^{2}\varphi_{\varepsilon}\left(  X\right)  B\left(  X\right)
g_{k},B\left(  X\right)  g_{k}\right\rangle _{H}\smallskip\\
\quad\quad\quad=%
%TCIMACRO{\dsum \limits_{k=1}^{\infty}}%
%BeginExpansion
{\displaystyle\sum\limits_{k=1}^{\infty}}
%EndExpansion
\underbrace{D^{2}\varphi_{\varepsilon}\left(  \left(  J_{H}^{\varepsilon
}\right)  ^{1/2}\left(  X\right)  \right)  }_{=1}\left\langle \left(
J_{H}^{\varepsilon}\right)  ^{1/2}\left(  B\left(  X\right)  \left(
g_{k}\right)  \right)  ,\left(  J_{H}^{\varepsilon}\right)  ^{1/2}\left(
B\left(  X\right)  \left(  g_{k}\right)  \right)  \right\rangle _{H}%
\smallskip\\
\quad\quad\quad=%
%TCIMACRO{\dsum \limits_{k=1}^{\infty}}%
%BeginExpansion
{\displaystyle\sum\limits_{k=1}^{\infty}}
%EndExpansion
\left\Vert \left(  J_{H}^{\varepsilon}\right)  ^{1/2}\left(  B\left(
X\right)  \left(  g_{k}\right)  \right)  \right\Vert _{H}^{2}=\left\Vert
\left(  J_{H}^{\varepsilon}\right)  ^{1/2}\left(  B\left(  X\right)  \right)
\right\Vert _{L_{2}\left(  U;H\right)  }^{2}.
\end{array}
\]
Reminding that $\left(  J_{H}^{\varepsilon}\right)  ^{1/2}$ is self-adjoint,
It\^{o}'s formula leads to%
\begin{equation}%
\begin{array}
[c]{l}%
\dfrac{1}{2}\left\Vert \left(  J_{H}^{\varepsilon}\right)  ^{1/2}(X_{t}%
^{0,x})\right\Vert _{H}^{2}-%
%TCIMACRO{\dint _{0}^{t}}%
%BeginExpansion
{\displaystyle\int_{0}^{t}}
%EndExpansion
\left\langle J_{H}^{\varepsilon}(X_{r}^{0,x}),A(X_{r}^{0,x})\right\rangle
_{H}dr\medskip\\
\quad\quad=\dfrac{1}{2}\left\Vert \left(  J_{H}^{\varepsilon}\right)
^{1/2}\left(  x\right)  \right\Vert _{H}^{2}+%
%TCIMACRO{\dint _{0}^{t}}%
%BeginExpansion
{\displaystyle\int_{0}^{t}}
%EndExpansion
\left\Vert \left(  J_{H}^{\varepsilon}\right)  ^{1/2}(B(X_{r}^{0,x}%
))\right\Vert _{L_{2}\left(  U;H\right)  }^{2}dr\medskip\\
\quad\quad\quad+%
%TCIMACRO{\dint _{0}^{t}}%
%BeginExpansion
{\displaystyle\int_{0}^{t}}
%EndExpansion
\left\langle \left(  J_{H}^{\varepsilon}\right)  ^{1/2}(X_{r}^{0,x}),\left(
J_{H}^{\varepsilon}\right)  ^{1/2}(B(X_{r}^{0,x})dW_{r})\right\rangle _{H},
\end{array}
\label{Ito-Lemma bound in V}%
\end{equation}
We first apply the assumption $(H_{6})$ and the linearity of $B$ in order to
obtain from (\ref{Ito-Lemma bound in V}) the following inequality%
\[%
\begin{array}
[c]{l}%
\dfrac{1}{2}\left\Vert \left(  J_{H}^{\varepsilon}\right)  ^{1/2}(X_{t}%
^{0,x})\right\Vert _{H}^{2}\leq\dfrac{1}{2}\left\Vert \left(  J_{H}%
^{\varepsilon}\right)  ^{1/2}\left(  x\right)  \right\Vert _{H}^{2}+%
%TCIMACRO{\dint _{0}^{t}}%
%BeginExpansion
{\displaystyle\int_{0}^{t}}
%EndExpansion
\left\Vert B(\left(  J_{H}^{\varepsilon}\right)  ^{1/2}(X_{r}^{0,x}%
))\right\Vert _{L_{2}\left(  U;H\right)  }^{2}dr\medskip\\
\quad\quad\quad+%
%TCIMACRO{\dint _{0}^{t}}%
%BeginExpansion
{\displaystyle\int_{0}^{t}}
%EndExpansion
\left\langle \left(  J_{H}^{\varepsilon}\right)  ^{1/2}(X_{r}^{0,x}),\left(
J_{H}^{\varepsilon}\right)  ^{1/2}(B(X_{r}^{0,x})dW_{r})\right\rangle _{H}.
\end{array}
\]
After taking supremum over $t\in\left[  0,s\right]  ,$ with $s>t$ arbitrarily,
fixed, and applying the expectation, it becomes%
\begin{equation}%
\begin{array}
[c]{l}%
\dfrac{1}{2}\mathbb{E}\left(  \sup_{t\in\left[  0,s\right]  }\left\Vert
\left(  J_{H}^{\varepsilon}\right)  ^{1/2}(X_{t}^{0,x})\right\Vert _{H}%
^{2}\right)  \leq\dfrac{1}{2}\mathbb{E}\left\Vert \left(  J_{H}^{\varepsilon
}\right)  ^{1/2}\left(  x\right)  \right\Vert _{H}^{2}+\mathbb{E}%
%TCIMACRO{\dint _{0}^{s}}%
%BeginExpansion
{\displaystyle\int_{0}^{s}}
%EndExpansion
\left\Vert B(\left(  J_{H}^{\varepsilon}\right)  ^{1/2}(X_{r}^{0,x}%
))\right\Vert _{L_{2}\left(  U;H\right)  }^{2}dr\medskip\\
\quad\quad\quad+\mathbb{E}\left(  \sup_{t\in\left[  0,s\right]  }\left\vert
%TCIMACRO{\dint _{0}^{t}}%
%BeginExpansion
{\displaystyle\int_{0}^{t}}
%EndExpansion
\left\langle \left(  J_{H}^{\varepsilon}\right)  ^{1/2}(X_{r}^{0,x}),\left(
J_{H}^{\varepsilon}\right)  ^{1/2}(B(X_{r}^{0,x})dW_{r})\right\rangle
_{H}\right\vert \right)  .
\end{array}
\label{Ito-Lemma bound in V 2}%
\end{equation}
According to the Burkholder--Davis--Gundy inequality (see Pr\'{e}v\^{o}t,
Rockner \cite[Proposition D.0.1]{Prevot/Rockner:07} or, for more refined
constants under various situations, Ren \cite{Ren:08}) and keeping still in
mind the li\-ne\-a\-ri\-ty of $B$, we get, for all $s\in\left[  0,T\right]  $,%
\[%
\begin{array}
[c]{l}%
\mathbb{E}\left(  \sup_{t\in\left[  0,s\right]  }\left\vert
%TCIMACRO{\dint _{0}^{t}}%
%BeginExpansion
{\displaystyle\int_{0}^{t}}
%EndExpansion
\left\langle \left(  J_{H}^{\varepsilon}\right)  ^{1/2}(X_{r}^{0,x}),\left(
J_{H}^{\varepsilon}\right)  ^{1/2}(B(X_{r}^{0,x})dW_{r})\right\rangle
_{H}\right\vert \right)  \medskip\\
\quad\leq3\mathbb{E}\left(
%TCIMACRO{\dint _{0}^{s}}%
%BeginExpansion
{\displaystyle\int_{0}^{s}}
%EndExpansion
\left\Vert (B(X_{r}^{0,x}))^{\ast}\left(  J_{H}^{\varepsilon}\right)
^{1/2}\left(  J_{H}^{\varepsilon}\right)  ^{1/2}(X_{r}^{0,x})\right\Vert
_{U}^{2}dr\right)  ^{1/2}\medskip\\
\quad\leq3\mathbb{E}\left(
%TCIMACRO{\dint _{0}^{s}}%
%BeginExpansion
{\displaystyle\int_{0}^{s}}
%EndExpansion
\left\Vert (\left(  J_{H}^{\varepsilon}\right)  ^{1/2}(B(X_{r}^{0,x})))^{\ast
}\left(  J_{H}^{\varepsilon}\right)  ^{1/2}(X_{r}^{0,x})\right\Vert _{U}%
^{2}dr\right)  ^{1/2}\medskip\\
\quad\leq3\mathbb{E}\left(
%TCIMACRO{\dint _{0}^{s}}%
%BeginExpansion
{\displaystyle\int_{0}^{s}}
%EndExpansion
\left\Vert \left(  J_{H}^{\varepsilon}\right)  ^{1/2}(B(X_{r}^{0,x}%
))\right\Vert _{L_{2}\left(  U;H\right)  }^{2}\left\Vert \left(
J_{H}^{\varepsilon}\right)  ^{1/2}(X_{r}^{0,x})\right\Vert _{H}^{2}dr\right)
^{1/2}\medskip\\
\quad\leq3M_{B}\mathbb{E}\left(
%TCIMACRO{\dint _{0}^{s}}%
%BeginExpansion
{\displaystyle\int_{0}^{s}}
%EndExpansion
\left\Vert \left(  J_{H}^{\varepsilon}\right)  ^{1/2}(X_{r}^{0,x})\right\Vert
_{H}^{2}\left\Vert \left(  J_{H}^{\varepsilon}\right)  ^{1/2}(X_{r}%
^{0,x})\right\Vert _{H}^{2}dr\right)  ^{1/2}\medskip\\
\quad\leq3M_{B}\mathbb{E}\left(  \sup_{t\in\left[  0,s\right]  }\left\Vert
\left(  J_{H}^{\varepsilon}\right)  ^{1/2}(X_{t}^{0,x})\right\Vert _{H}%
^{2}\right)  \left(
%TCIMACRO{\dint _{0}^{s}}%
%BeginExpansion
{\displaystyle\int_{0}^{s}}
%EndExpansion
\left\Vert \left(  J_{H}^{\varepsilon}\right)  ^{1/2}(X_{r}^{0,x})\right\Vert
_{H}^{2}dr\right)  ^{1/2}\medskip\\
\quad\leq\dfrac{1}{4}\mathbb{E}\left(  \sup_{t\in\left[  0,s\right]
}\left\Vert \left(  J_{H}^{\varepsilon}\right)  ^{1/2}(X_{t}^{0,x})\right\Vert
_{H}^{2}\right)  +9M_{B}^{2}\mathbb{E}%
%TCIMACRO{\dint _{0}^{s}}%
%BeginExpansion
{\displaystyle\int_{0}^{s}}
%EndExpansion
\left\Vert \left(  J_{H}^{\varepsilon}\right)  ^{1/2}(X_{r}^{0,x})\right\Vert
_{H}^{2}dr.\medskip
\end{array}
\]
Returning now to (\ref{Ito-Lemma bound in V 2}), we obtain%
\[%
\begin{array}
[c]{l}%
\dfrac{1}{4}\mathbb{E}\left(  \sup_{t\in\left[  0,s\right]  }\left\Vert
\left(  J_{H}^{\varepsilon}\right)  ^{1/2}(X_{t}^{0,x})\right\Vert _{H}%
^{2}\right)  \leq\dfrac{1}{2}\left\Vert \left(  J_{H}^{\varepsilon}\right)
^{1/2}\left(  x\right)  \right\Vert _{H}^{2}\medskip\\
\quad+\mathbb{E}%
%TCIMACRO{\dint _{0}^{s}}%
%BeginExpansion
{\displaystyle\int_{0}^{s}}
%EndExpansion
\left\Vert B(\left(  J_{H}^{\varepsilon}\right)  ^{1/2}(X_{r}^{0,x}%
))\right\Vert _{L_{2}\left(  U;H\right)  }^{2}dr+9\mathbb{E}%
%TCIMACRO{\dint _{0}^{s}}%
%BeginExpansion
{\displaystyle\int_{0}^{s}}
%EndExpansion
\left\Vert \left(  J_{H}^{\varepsilon}\right)  ^{1/2}(X_{r}^{0,x})\right\Vert
_{H}^{2}dr\medskip\\
\leq\dfrac{1}{2}\left\Vert \left(  J_{H}^{\varepsilon}\right)  ^{1/2}\left(
x\right)  \right\Vert _{H}^{2}+10M_{B}^{2}\mathbb{E}%
%TCIMACRO{\dint _{0}^{s}}%
%BeginExpansion
{\displaystyle\int_{0}^{s}}
%EndExpansion
\left\Vert \left(  J_{H}^{\varepsilon}\right)  ^{1/2}(X_{r}^{0,x})\right\Vert
_{H}^{2}dr.
\end{array}
\]
Classical use of Gronwall's inequality provides now that%
\begin{equation}%
\begin{array}
[c]{ccc}%
\mathbb{E}\sup_{t\in\left[  0,s\right]  }\left\Vert \left(  J_{H}%
^{\varepsilon}\right)  ^{1/2}(X_{t}^{0,x})\right\Vert _{H}^{2} & \leq & C
\end{array}
\label{boundedness of supremum}%
\end{equation}
and, passing to the limit, as $\varepsilon\rightarrow0$, we obtain the
inequality (\ref{marg-sol}), due to relation (\ref{norm equality}%
).$\smallskip$

We can now advance to the proof of the inequality (\ref{marg-dif}). Starting
from%
\[
X_{r}^{0,x}-X_{t}^{0,x}=%
%TCIMACRO{\dint _{t}^{r}}%
%BeginExpansion
{\displaystyle\int_{t}^{r}}
%EndExpansion
A(X_{u}^{0,x})du+\int_{t}^{r}B(X_{u}^{0,x})dW_{u}~,
\]
apply It\^{o}'s formula for the function $\varphi_{\varepsilon}$ given by
(\ref{definition of fi}). We have%
\[%
\begin{array}
[c]{l}%
\dfrac{1}{2}\left\Vert \left(  J_{H}^{\varepsilon}\right)  ^{1/2}(X_{r}%
^{0,x}-X_{t}^{0,x})\right\Vert _{H}^{2}-%
%TCIMACRO{\dint _{t}^{r}}%
%BeginExpansion
{\displaystyle\int_{t}^{r}}
%EndExpansion
\left\langle J_{H}^{\varepsilon}(X_{u}^{0,x}),A(X_{u}^{0,x})\right\rangle
_{H}du\medskip\\
\quad\quad=%
%TCIMACRO{\dint _{t}^{r}}%
%BeginExpansion
{\displaystyle\int_{t}^{r}}
%EndExpansion
\left\Vert \left(  J_{H}^{\varepsilon}\right)  ^{1/2}(B(X_{u}^{0,x}%
))\right\Vert _{L_{2}\left(  U;H\right)  }^{2}du+%
%TCIMACRO{\dint _{t}^{r}}%
%BeginExpansion
{\displaystyle\int_{t}^{r}}
%EndExpansion
\left\langle \left(  J_{H}^{\varepsilon}\right)  ^{1/2}(X_{u}^{0,x}),\left(
J_{H}^{\varepsilon}\right)  ^{1/2}(B(X_{u}^{0,x})dW_{u})\right\rangle _{H}%
\end{array}
\]
Once again, we use the hypothesis $(H_{6}),$ take $h>0,$ consider the supremum
over $r\in\left[  t,t+h\right]  $ and apply the expectation, in order to
obtain%
\begin{equation}%
\begin{array}
[c]{l}%
\dfrac{1}{2}\mathbb{E}\left(  \sup_{r\in\left[  t,t+h\right]  }\left\Vert
\left(  J_{H}^{\varepsilon}\right)  ^{1/2}(X_{r}^{0,x}-X_{t}^{0,x})\right\Vert
_{H}^{2}\right)  \leq\mathbb{E}%
%TCIMACRO{\dint _{t}^{t+h}}%
%BeginExpansion
{\displaystyle\int_{t}^{t+h}}
%EndExpansion
\left\Vert B(\left(  J_{H}^{\varepsilon}\right)  ^{1/2}(X_{u}^{0,x}%
))\right\Vert _{L_{2}\left(  U;H\right)  }^{2}du\medskip\\
\quad\quad\quad+\mathbb{E}\left(  \sup_{r\in\left[  t,t+h\right]  }\left\vert
%TCIMACRO{\dint _{t}^{r}}%
%BeginExpansion
{\displaystyle\int_{t}^{r}}
%EndExpansion
\left\langle \left(  J_{H}^{\varepsilon}\right)  ^{1/2}(X_{u}^{0,x}),\left(
J_{H}^{\varepsilon}\right)  ^{1/2}(B(X_{u}^{0,x})dW_{u})\right\rangle
_{H}\right\vert \right)  .
\end{array}
\label{Ito for the increments}%
\end{equation}
We have, according to assumption $\left(  H_{7}\right)  $ and to the
boundedness (\ref{boundedness of supremum}),%
\begin{align*}
\mathbb{E}%
%TCIMACRO{\dint _{t}^{t+h}}%
%BeginExpansion
{\displaystyle\int_{t}^{t+h}}
%EndExpansion
\left\Vert B(\left(  J_{H}^{\varepsilon}\right)  ^{1/2}(X_{u}^{0,x}%
))\right\Vert _{L_{2}\left(  U;H\right)  }^{2}du  &  \leq M_{B}\mathbb{E}%
%TCIMACRO{\dint _{t}^{t+h}}%
%BeginExpansion
{\displaystyle\int_{t}^{t+h}}
%EndExpansion
\left\Vert \left(  J_{H}^{\varepsilon}\right)  ^{1/2}(X_{u}^{0,x})\right\Vert
_{H}^{2}du\\
&  \leq M_{B}h\mathbb{E}\left(  \sup\nolimits_{r\in\left[  t,t+h\right]
}\left\Vert \left(  J_{H}^{\varepsilon}\right)  ^{1/2}(X_{r}^{0,x})\right\Vert
_{H}^{2}\right)  \leq Ch
\end{align*}
By the Burkholder--Davis--Gundy inequality it follows once again, applying
(\ref{boundedness of supremum}),%
\[%
\begin{array}
[c]{l}%
\mathbb{E}\left(  \sup_{r\in\left[  t,t+h\right]  }\left\vert
%TCIMACRO{\dint _{t}^{r}}%
%BeginExpansion
{\displaystyle\int_{t}^{r}}
%EndExpansion
\left\langle \left(  J_{H}^{\varepsilon}\right)  ^{1/2}(X_{u}^{0,x}),\left(
J_{H}^{\varepsilon}\right)  ^{1/2}(B(X_{u}^{0,x})dW_{u})\right\rangle
_{H}\right\vert \right)  \medskip\\
\quad\leq3M_{B}\mathbb{E}\left(  \sup_{r\in\left[  t,t+h\right]  }\left\Vert
\left(  J_{H}^{\varepsilon}\right)  ^{1/2}(X_{t}^{0,x})\right\Vert _{H}%
^{2}\right)  \left(
%TCIMACRO{\dint _{t}^{t+h}}%
%BeginExpansion
{\displaystyle\int_{t}^{t+h}}
%EndExpansion
\left\Vert \left(  J_{H}^{\varepsilon}\right)  ^{1/2}(X_{u}^{0,x})\right\Vert
_{H}^{2}du\right)  ^{1/2}\leq3M_{B}C^{2}h
\end{array}
\]
Insert the last to inequalities into (\ref{Ito for the increments}) and the
conclusion follows.\hfill\medskip
\end{proof}

\begin{remark}
We highlight that, the a priori estimate (\ref{marg-sol}) obtained in Lemma
\ref{estim1} ensures that, for each trajectory, there exists a finite positive
constant radius $R$ such that the solution remains in the ball $B_{R}\left(
0\right)  \subset V.$ For the purpose of our research, we limit our interest
only on the locally Lipschitz constant $L_{A,R},$ provided by the hypotheses
$\left(  H_{5}\right)  $. However, in order to keep the notation light, we
write $L_{A}$ throughout, but it should be understood pathwise as the local
Lipschitz constant on the visited bounded set.
\end{remark}

We advance now to the proof of our main viability result, Theorem
\ref{main result}.\medskip

\begin{proof}
[Proof of Theorem \ref{main result}]$\smallskip$

\noindent\fbox{\textbf{Step I }$(i)\Rightarrow(ii):$}\textbf{ }We will start
by proving that if $X$ is a viable stochastic process in $K$ then, for every
$t\in\left[  0,T\right]  $ and for every $\mathcal{F}_{t}$ - random variable
$x\in L^{2}\left(  \Omega;V\right)  $ from $K$ we have that%
\[
\left(  A\left(  x\right)  ,B\left(  x\right)  \right)  \in\mathcal{S}%
_{K}\left(  t,x\right)  .
\]

Let $t\in\left[  0,T\right]  $ and $x\in L^{2}\left(  \Omega;V\right)  $ an
$\mathcal{F}_{t}$ - random variable from $K.$

Under the assumptions above, we first apply Theorem 1.1 at page 83 from
\cite{Pardoux:75} and we get that equation (\ref{GeneralFormEq.}) has a unique
strong solution $X_{s}^{t,x}$,\ which belongs to $K$ by the viability property.

We take the solution%
\[
X_{s}^{t,x}=x+\int_{t}^{s}A\left(  X_{r}^{t,x}\right)  dr+\int_{t}^{s}B\left(
X_{r}^{t,x}\right)  dW_{r}\in K
\]
and we rewrite as%
\[
X_{t+h}^{t,x}=x+\int_{t}^{t+h}\left(  A\left(  X_{r}^{t,x}\right)  -A\left(
x\right)  +A\left(  x\right)  \right)  dr+\int_{t}^{t+h}\left(  B\left(
X_{r}^{t,x}\right)  -B\left(  x\right)  +B\left(  x\right)  \right)  dW_{r}\in
K.
\]
In order to get that
\[
\left(  A\left(  x\right)  ,B\left(  x\right)  \right)  \in\mathcal{S}%
_{K}\left(  t,x\right)  ,
\]
it is sufficient to show that
\[
\alpha\left(  r\right)  =A\left(  X_{r}^{t,x}\right)  -A\left(  x\right)
\text{ converges to }0\text{ in }V^{\ast}\text{ for }r\rightarrow t
\]
and%
\[
\beta\left(  r\right)  =B\left(  X_{r}^{t,x}\right)  -B\left(  x\right)
\text{ converges to }0\text{ in }L_{2}\left(  U;H\right)  \text{ for
}r\rightarrow t.
\]

This follows directly, by using the Lipschitz assumptions on $A$ and $B$
combined with the Lemma \ref{estim1}. Indeed,%
\[
\left\{
\begin{array}
[c]{l}%
\left\Vert A(X_{r}^{t,x})-A\left(  x\right)  \right\Vert _{V^{\ast}}\leq
L_{A}\left\Vert X_{r}^{t,x}-x\right\Vert _{V}\smallskip\\
\left\Vert B(X_{r}^{t,x})-B\left(  x\right)  \right\Vert _{L_{2}\left(
U;H\right)  }\leq\mathcal{C}_{V,H}M_{B}\left\Vert X_{r}^{t,x}-x\right\Vert
_{V}.
\end{array}
\right.
\]
By taking the supremum over $r\in\left[  t,t+h\right]  $ and the expectation,
we obtain%
\begin{align*}
\mathbb{E}\left(  \sup\limits_{r\in\left[  t,t+h\right]  }\left\Vert
\alpha(r)\right\Vert _{V^{\ast}}^{2}\right)   &  =\mathbb{E}\left(
\sup\limits_{r\in\left[  t,t+h\right]  }\left\Vert A\left(  X_{r}%
^{t,x}\right)  -A\left(  x\right)  \right\Vert _{V^{\ast}}^{2}\right)
\medskip\\
&  \leq L_{A}^{2}\mathbb{E}\left(  \sup\limits_{r\in\left[  t,t+h\right]
}\left\Vert X_{r}^{t,x}-x\right\Vert _{V}^{2}\right)  \leq L_{A}^{2}Ch
\end{align*}
and%
\begin{align*}
\mathbb{E}\left(  \sup\limits_{r\in\left[  t,t+h\right]  }\left\Vert \frac
{1}{h^{\delta}}\beta\left(  r\right)  \right\Vert _{L_{2}\left(  U;H\right)
}^{2}\right)   &  =\frac{1}{h^{2\delta}}\mathbb{E}\left(  \sup\limits_{r\in
\left[  t,t+h\right]  }\left\Vert B\left(  X_{r}^{t,x}\right)  -B\left(
x\right)  \right\Vert _{L_{2}\left(  U;H\right)  }^{2}\right)  \medskip\\
&  \leq\frac{\mathcal{C}_{V,H}^{2}}{h^{2\delta}}M_{B}^{2}\mathbb{E}\left(
\sup\limits_{r\in\left[  t,t+h\right]  }\left\Vert X_{r}^{t,x}-x\right\Vert
_{V}^{2}\right)  \leq M_{B}^{2}Ch^{1-2\delta}.
\end{align*}
The first assertion is now complete.$\smallskip$

\noindent\fbox{\textbf{Step II }$(ii)\Rightarrow(iii):$\textbf{ }} It will be
shown that%
\[
\mathcal{S}_{K}\left(  t,x\right)  \subset\mathcal{T}_{K}\left(  t,x\right)
.
\]

We set%
\[
\alpha_{h}:=\frac{1}{h}\int_{t}^{t+h}\alpha(s)ds\quad\quad\text{and}\quad
\quad\beta_{h}=\frac{1}{h^{\frac{1}{2}+\delta}}\int_{t}^{t+h}\beta(s)dW_{s}.
\]
We deduce from the Cauchy-Schwarz inequality that
\begin{align*}
\mathbb{E}\left(  \left\Vert \alpha_{h}\right\Vert _{V^{\ast}}^{2}\right)   &
=\mathbb{E}\left(  \left\Vert \frac{1}{h}\int_{t}^{t+h}\alpha(s)ds\right\Vert
_{V^{\ast}}^{2}\right)  =\frac{1}{h^{2}}\mathbb{E}\left(  \left\Vert \int
_{t}^{t+h}1\cdot\alpha(s)ds\right\Vert _{V^{\ast}}^{2}\right) \\
&  \leq\frac{1}{h^{2}}\mathbb{E}\left(  h\int_{t}^{t+h}\left\Vert
\alpha(s)\right\Vert _{V^{\ast}}^{2}ds\right)  \leq\frac{1}{h}\mathbb{E}%
%TCIMACRO{\dint _{t}^{t+h}}%
%BeginExpansion
{\displaystyle\int_{t}^{t+h}}
%EndExpansion
\sup\limits_{r\in\left[  t,t+h\right]  }\left(  \left\Vert \alpha
(r)\right\Vert _{V\ast}^{2}\right)  ds\\
&  \leq\mathbb{E}\left(  \sup\limits_{r\in\left[  t,t+h\right]  }\left(
\left\Vert \alpha(r)\right\Vert _{V\ast}^{2}\right)  \right)
\end{align*}
which converges to 0, for $h\rightarrow0$, due to the choice of $\alpha$ from
Definition \ref{defS}.

Similarly, using the Burkholder-Davis-Gundy inequality,
\begin{align*}
\mathbb{E}\left(  \left\Vert \beta_{h}\right\Vert _{H}^{2}\right)   &
=\mathbb{E}\left(  \left\Vert \frac{1}{h^{\frac{1}{2}+\delta}}\int_{t}%
^{t+h}\beta(s)dW_{s}\right\Vert _{H}^{2}\right)  \leq\frac{1}{h^{1+2\delta}%
}\mathbb{E}\left(  \sup_{s\in\left[  t,t+h\right]  }\left\Vert \int_{t}%
^{s}\beta(s)dW_{s}\right\Vert _{H}^{2}\right) \\
&  \leq\frac{C}{h^{1+2\delta}}\mathbb{E}\left(  \int_{t}^{t+h}\left\Vert
\beta\left(  s\right)  \right\Vert _{L_{2}\left(  U;H\right)  }^{2}ds\right)
\leq\frac{C}{h^{1+2\delta}}\mathbb{E}\left(  \int_{t}^{t+h}\sup_{r\in\left[
t,t+h\right]  }\left\Vert \beta\left(  r\right)  \right\Vert _{L_{2}\left(
U;H\right)  }^{2}ds\right) \\
&  \leq C\mathbb{E}\left(  \sup_{r\in\left[  t,t+h\right]  }\left\Vert
\frac{1}{h^{\delta}}\beta\left(  r\right)  \right\Vert _{L_{2}\left(
U;H\right)  }^{2}\right)  ,\quad\text{for some }C>0,
\end{align*}
which converges to 0, for $h\rightarrow0$. The expectation$\ $of \ $\beta_{h}$
is obviously equal to $0$, for every $h,$ and $\beta_{h}$ is independent of
$\ \mathcal{F}_{t}.$

\fbox{\textbf{Step III }$(iii)\Rightarrow(i):$ } We extend $A$ and $B$ defined
on $K$ by the maps $\tilde{A}$ and $\tilde{B}$ defined on the whole space by%
\[
\tilde{A}\left(  x\right)  :=A\left(  \pi_{K}\left(  x\right)  \right)
\quad\text{and}\quad\tilde{B}:=B\left(  \pi_{K}\left(  x\right)  \right)  .
\]
The pair $(\tilde{A},\tilde{B})$ satisfies, obviously, the condition%
\[
(\tilde{A}\left(  x\right)  ,\tilde{B}\left(  x\right)  )\in\mathcal{T}%
_{K}\left(  t,\pi_{K}\left(  x\right)  \right)  .
\]
Starting from this point, we use some specific technical instruments, in order
to be able to control, on the right hand side, a scaled variation of the
expected value applied to the square distance of the state to the viability
set. This implies the viability of $K$ with respect to $(\tilde{A},\tilde
{B}).$ Since these maps coincide with $\left(  A,B\right)  $ on $K$, we deduce
that $K$ is a viability domain of $\left(  A,B\right)  .$

Let consider now as the initial datum an $\mathcal{F}_{t}-$measurable random
variable $x\in V$ and select an $\mathcal{F}_{t}$-measurable projection
$y\in\Pi_{K}\left(  x\right)  \subset V,$ $\mathbb{P-}a.s.$ By omitting the
tilde sign from the pair $(\tilde{A},\tilde{B})$ - for the simplicity of the
presentation - one can write, due to the nature of the above extension,%
\[
\left(  A\left(  x\right)  ,B\left(  x\right)  \right)  \in\mathcal{T}%
_{K}\left(  t,y\right)  .
\]
We write the solution of Eq.(\ref{GeneralFormEq.}) under integral form, on
$\left[  t,t+h\right]  ,$ for $h\geq0,$ small enough:%
\[
\left\{
\begin{array}
[c]{l}%
X_{t+h}=X_{t}+%
%TCIMACRO{\dint _{t}^{t+h}}%
%BeginExpansion
{\displaystyle\int_{t}^{t+h}}
%EndExpansion
A\left(  X_{s}\right)  ds+%
%TCIMACRO{\dint _{t}^{t+h}}%
%BeginExpansion
{\displaystyle\int_{t}^{t+h}}
%EndExpansion
B\left(  X_{s}\right)  dW_{s}~,\medskip\\
X_{t}=x.
\end{array}
\right.
\]
Chose, in an arbitrarily manner, any pair $\left(  \gamma,\nu\right)  $ of
$\mathcal{F}_{t}-$measurable random variables, belonging to the variational
contingent set $\mathcal{T}_{K}\left(  t,y\right)  $. From Definition
\ref{defT} it follows the existence of a positive sequence $\left(
h_{n}\right)  _{n}$, converging to $0$, and of some $\mathcal{F}_{t+h_{n}}%
-$measurable sequences $\left(  \alpha_{n}\right)  _{n}$ and $\left(
\beta_{n}\right)  _{n}$ satisfying (\ref{ConditiiAlfaBeta}). Moreover, for
every $n\geq0,$%
\[
y+\nu\left(  W_{t+h_{n}}-W_{t}\right)  +\gamma h_{n}+h_{n}\alpha_{n}%
+h_{n}^{\frac{1}{2}+\delta}\beta_{n}\in K,\quad\mathbb{P-}a.s.
\]
Therefore%
\begin{equation}
\left\{
\begin{array}
[c]{l}%
d_{K}^{2}\left(  X_{t+h_{n}}\right)  -d_{K}^{2}\left(  X_{t}\right)
\medskip\\
\leq\left\Vert x+%
%TCIMACRO{\dint _{t}^{t+h_{n}}}%
%BeginExpansion
{\displaystyle\int_{t}^{t+h_{n}}}
%EndExpansion
A\left(  X_{s}\right)  ds+%
%TCIMACRO{\dint _{t}^{t+h_{n}}}%
%BeginExpansion
{\displaystyle\int_{t}^{t+h_{n}}}
%EndExpansion
B\left(  X_{s}\right)  dW_{s}-y-\nu\left(  W_{t+h_{n}}-W_{t}\right)  -\gamma
h_{n}-h_{n}\alpha_{n}-h_{n}^{\frac{1}{2}+\delta}\beta_{n}\right\Vert
_{V^{\ast}}^{2}\medskip\\
\quad-\left\Vert x-y\right\Vert _{V^{\ast}}^{2}\medskip\\
\leq\left\Vert \left(  x-y\right)  +%
%TCIMACRO{\dint _{t}^{t+h_{n}}}%
%BeginExpansion
{\displaystyle\int_{t}^{t+h_{n}}}
%EndExpansion
\left(  A\left(  X_{s}\right)  -\gamma\right)  ds+%
%TCIMACRO{\dint _{t}^{t+h_{n}}}%
%BeginExpansion
{\displaystyle\int_{t}^{t+h_{n}}}
%EndExpansion
\left(  B\left(  X_{s}\right)  -\nu\right)  dW_{s}-h_{n}\alpha_{n}%
-h_{n}^{\frac{1}{2}+\delta}\beta_{n}\right\Vert _{V^{\ast}}^{2}-\left\Vert
x-y\right\Vert _{V^{\ast}}^{2}\medskip\\
=:I
\end{array}
\right.  \label{delta_distance}%
\end{equation}
The last term can be split in the following way:%
\[
\left\{
\begin{array}
[c]{lll}%
I= & 2\left\langle x-y,%
%TCIMACRO{\dint _{t}^{t+h_{n}}}%
%BeginExpansion
{\displaystyle\int_{t}^{t+h_{n}}}
%EndExpansion
\left(  B\left(  X_{s}\right)  -\nu\right)  dW_{s}\right\rangle _{V^{\ast}%
}\medskip & \qquad I_{1}\\
& +2\left\langle x-y,%
%TCIMACRO{\dint _{t}^{t+h_{n}}}%
%BeginExpansion
{\displaystyle\int_{t}^{t+h_{n}}}
%EndExpansion
\left(  A\left(  X_{s}\right)  -\gamma\right)  ds\right\rangle _{V^{\ast}%
}\medskip & \qquad I_{2}\\
& +\left\Vert
%TCIMACRO{\dint _{t}^{t+h_{n}}}%
%BeginExpansion
{\displaystyle\int_{t}^{t+h_{n}}}
%EndExpansion
\left(  B\left(  X_{s}\right)  -\nu\right)  dW_{s}\right\Vert _{V^{\ast}}%
^{2}\medskip & \qquad I_{3}\\
& +\left\Vert
%TCIMACRO{\dint _{t}^{t+h_{n}}}%
%BeginExpansion
{\displaystyle\int_{t}^{t+h_{n}}}
%EndExpansion
\left(  A\left(  X_{s}\right)  -\gamma\right)  ds\right\Vert _{V^{\ast}}%
^{2}\medskip & \qquad I_{4}\\
& +2\left\langle
%TCIMACRO{\dint _{t}^{t+h_{n}}}%
%BeginExpansion
{\displaystyle\int_{t}^{t+h_{n}}}
%EndExpansion
\left(  B\left(  X_{s}\right)  -\nu\right)  dW_{s},%
%TCIMACRO{\dint _{t}^{t+h_{n}}}%
%BeginExpansion
{\displaystyle\int_{t}^{t+h_{n}}}
%EndExpansion
\left(  A\left(  X_{s}\right)  -\gamma\right)  ds\right\rangle _{V^{\ast}%
}\medskip & \qquad I_{5}\\
& -2\left\langle x-y+%
%TCIMACRO{\dint _{t}^{t+h_{n}}}%
%BeginExpansion
{\displaystyle\int_{t}^{t+h_{n}}}
%EndExpansion
\left(  A\left(  X_{s}\right)  -\gamma\right)  ds+%
%TCIMACRO{\dint _{t}^{t+h_{n}}}%
%BeginExpansion
{\displaystyle\int_{t}^{t+h_{n}}}
%EndExpansion
\left(  B\left(  X_{s}\right)  -\nu\right)  dW_{s},h_{n}\alpha_{n}%
\right\rangle _{V^{\ast}}\medskip & \qquad I_{6}\\
& -2\left\langle x-y+%
%TCIMACRO{\dint _{t}^{t+h_{n}}}%
%BeginExpansion
{\displaystyle\int_{t}^{t+h_{n}}}
%EndExpansion
\left(  A\left(  X_{s}\right)  -\gamma\right)  ds+%
%TCIMACRO{\dint _{t}^{t+h_{n}}}%
%BeginExpansion
{\displaystyle\int_{t}^{t+h_{n}}}
%EndExpansion
\left(  B\left(  X_{s}\right)  -\nu\right)  dW_{s},h_{n}^{\frac{1}{2}+\delta
}\beta_{n}\right\rangle _{V^{\ast}}\medskip & \qquad I_{7}\\
& +\left\Vert h_{n}\alpha_{n}+h_{n}^{\frac{1}{2}+\delta}\beta_{n}\right\Vert
_{V^{\ast}}^{2} & \qquad I_{8}%
\end{array}
\right.
\]

We take the expectation in the inequality (\ref{delta_distance}) and estimate
each term of the right-hand side. First, we point out that%
\[
\mathbb{E}\left(  \left\langle x-y,%
%TCIMACRO{\dint _{t}^{t+h_{n}}}%
%BeginExpansion
{\displaystyle\int_{t}^{t+h_{n}}}
%EndExpansion
\left(  B\left(  X_{s}\right)  -\nu\right)  dW_{s}\right\rangle _{V^{\ast}%
}\right)  =\mathbb{E}\left(
%TCIMACRO{\dint _{t}^{t+h_{n}}}%
%BeginExpansion
{\displaystyle\int_{t}^{t+h_{n}}}
%EndExpansion
\left\langle x-y,\left(  B\left(  X_{s}\right)  -\nu\right)  dW_{s}%
\right\rangle _{V^{\ast}}\right)  =0.
\]
Indeed, let us consider the process $Y$ given by%
\[
dY_{t}=\left(  B\left(  X_{t}\right)  -\nu\right)  dW_{t}%
\]
and the the mapping $Z:V^{\ast}\rightarrow\mathbb{R},$%
\[
Z\left(  X\right)  =\langle x-y,X\rangle_{V^{\ast}}~.
\]
By It\^{o}'s formula, it follows that%
\[%
\begin{array}
[c]{rcl}%
dZ\left(  Y_{t}\right)  & = & Z^{\prime}\left(  Y_{t}\right)  dY\left(
t\right)  +\dfrac{1}{2}Z^{\prime\prime}\left(  Y_{t}\right)  \left(
dY_{t}\right)  ^{2}\smallskip\\
dZ\left(  Y_{t}\right)  & = & \langle x-y,\left(  B\left(  X_{t}\right)
-\nu\right)  dW_{t}\rangle_{V^{\ast}}~,
\end{array}
\]
that gives%
\[
\left\langle x-y,%
%TCIMACRO{\dint _{t}^{t+h_{n}}}%
%BeginExpansion
{\displaystyle\int_{t}^{t+h_{n}}}
%EndExpansion
\left(  B\left(  X_{s}\right)  -\nu\right)  dW_{s}\right\rangle _{V^{\ast}}=%
%TCIMACRO{\dint _{t}^{t+h_{n}}}%
%BeginExpansion
{\displaystyle\int_{t}^{t+h_{n}}}
%EndExpansion
\left\langle x-y,\left(  B\left(  X_{s}\right)  -\nu\right)  dW_{s}%
\right\rangle _{V^{\ast}}%
\]

For $I_{2}$ we have%
\[
\mathbb{E}\left(  I_{2}\right)  =\mathbb{E}\left(  2\left\langle x-y,%
%TCIMACRO{\dint _{t}^{t+h_{n}}}%
%BeginExpansion
{\displaystyle\int_{t}^{t+h_{n}}}
%EndExpansion
\left(  A\left(  X_{s}\right)  -\gamma\right)  ds\right\rangle _{V^{\ast}%
}\right)  =2h_{n}\mathbb{E}\left(  \left\langle x-y,\frac{1}{h_{n}}%
%TCIMACRO{\dint _{t}^{t+h_{n}}}%
%BeginExpansion
{\displaystyle\int_{t}^{t+h_{n}}}
%EndExpansion
\left(  A\left(  X_{s}\right)  -\gamma\right)  ds\right\rangle _{V^{\ast}%
}\right)
\]
and by Evans \cite[Appendix A, Theorem 8]{Evans:10}, it follows that%
\[
\mathbb{E}\left(  \left\langle x-y,\dfrac{1}{h_{n}}%
%TCIMACRO{\dint _{t}^{t+h_{n}}}%
%BeginExpansion
{\displaystyle\int_{t}^{t+h_{n}}}
%EndExpansion
\left(  A\left(  X_{s}\right)  -\gamma\right)  ds\right\rangle _{V^{\ast}%
}\right)  =\mathbb{E}\left(  \dfrac{1}{h_{n}}%
%TCIMACRO{\dint _{t}^{t+h_{n}}}%
%BeginExpansion
{\displaystyle\int_{t}^{t+h_{n}}}
%EndExpansion
\left\langle x-y,A\left(  X_{s}\right)  -\gamma\right\rangle _{V^{\ast}%
}ds\right)  .
\]
For $h_{n}\rightarrow0$ , the expectation converges to $\mathbb{E}\left(
\langle x-y,\left(  A\left(  X_{t}\right)  -\gamma\right)  \rangle_{V^{\ast}%
}\right)  $.

Consider now the third term%
\[
\mathbb{E}\left(  \left\Vert
%TCIMACRO{\dint _{t}^{t+h_{n}}}%
%BeginExpansion
{\displaystyle\int_{t}^{t+h_{n}}}
%EndExpansion
\left(  B\left(  X_{s}\right)  -\nu\right)  dW_{s}\right\Vert _{V^{\ast}}%
^{2}\right)  =h_{n}\left(  \dfrac{1}{h_{n}}\mathbb{E}\left(  \left\Vert
%TCIMACRO{\dint _{t}^{t+h_{n}}}%
%BeginExpansion
{\displaystyle\int_{t}^{t+h_{n}}}
%EndExpansion
\left(  B\left(  X_{s}\right)  -\nu\right)  dW_{s}\right\Vert _{V^{\ast}}%
^{2}\right)  \right)
\]
and, using the Burkholder-Davis-Gundy inequality, we obtain%
\[%
\begin{array}
[c]{ccl}%
\dfrac{1}{h_{n}}\mathbb{E}\left(  \left\Vert
%TCIMACRO{\dint _{t}^{t+h_{n}}}%
%BeginExpansion
{\displaystyle\int_{t}^{t+h_{n}}}
%EndExpansion
\left(  B\left(  X_{s}\right)  -\nu\right)  dW_{s}\right\Vert _{V^{\ast}}%
^{2}\right)  & \leq & \dfrac{1}{h_{n}}\mathbb{E}\left(  \sup\limits_{s\in
\left[  t,t+h_{n}\right]  }\left\Vert
%TCIMACRO{\dint _{t}^{s}}%
%BeginExpansion
{\displaystyle\int_{t}^{s}}
%EndExpansion
\left(  B\left(  X_{r}\right)  -\nu\right)  dW_{r}\right\Vert _{V^{\ast}}%
^{2}\right)  \medskip\\
& \leq & \dfrac{3\mathcal{C}_{H,V^{\ast}}^{2}}{h_{n}}\mathbb{E}\left(
%TCIMACRO{\dint _{t}^{t+h_{n}}}%
%BeginExpansion
{\displaystyle\int_{t}^{t+h_{n}}}
%EndExpansion
\left\Vert B\left(  X_{s}\right)  -\nu\right\Vert _{L^{2}\left(  U;H\right)
}^{2}ds\right)  ,
\end{array}
\]
which converges to $3\mathbb{E}(\left\Vert B\left(  X_{t}\right)
-\nu\right\Vert _{L^{2}\left(  U;H\right)  }^{2}),$ as $h_{n}\rightarrow0,$ by
the Lebesgue's dominated convergence theorem.

Due to the Cauchy-Schwarz inequality, the term $I_{4}$ can be estimated as%
\[%
\begin{array}
[c]{l}%
\mathbb{E}\left(  \left\Vert
%TCIMACRO{\dint _{t}^{t+h_{n}}}%
%BeginExpansion
{\displaystyle\int_{t}^{t+h_{n}}}
%EndExpansion
\left(  A\left(  X_{s}\right)  -\gamma\right)  ds\right\Vert _{V^{\ast}}%
^{2}\right)  =h_{n}\left(  \dfrac{1}{h_{n}}\mathbb{E}\left(  \left\Vert
%TCIMACRO{\dint _{t}^{t+h_{n}}}%
%BeginExpansion
{\displaystyle\int_{t}^{t+h_{n}}}
%EndExpansion
1\cdot\left(  A\left(  X_{s}\right)  -\gamma\right)  ds\right\Vert _{V^{\ast}%
}^{2}\right)  \right)  \medskip\\
\quad\quad\leq h_{n}\mathbb{E}\left(
%TCIMACRO{\dint _{t}^{t+h_{n}}}%
%BeginExpansion
{\displaystyle\int_{t}^{t+h_{n}}}
%EndExpansion
\left\Vert A\left(  X_{s}\right)  -\gamma\right\Vert _{V^{\ast}}^{2}ds\right)
\medskip\\
\quad\quad\leq h_{n}\mathbb{E}\left(
%TCIMACRO{\dint _{t}^{t+h_{n}}}%
%BeginExpansion
{\displaystyle\int_{t}^{t+h_{n}}}
%EndExpansion
\sup\limits_{r\in\left[  t,t+h_{n}\right]  }\left\Vert A\left(  X_{r}\right)
-\gamma\right\Vert _{V^{\ast}}^{2}ds\right)  \leq h_{n}^{2}\mathbb{E}\left(
\sup\limits_{r\in\left[  t,t+h_{n}\right]  }\left\Vert A\left(  X_{r}\right)
-\gamma\right\Vert _{V^{\ast}}^{2}\right)  \leq h_{n}^{2}K,
\end{array}
\]
since $A\left(  X_{s}\right)  $ and $\gamma$ are square-integrable.$\smallskip
$

Similarly, for $I_{5}$, we obtain%
\[%
\begin{array}
[c]{l}%
2\mathbb{E}\left(  \left\langle
%TCIMACRO{\dint _{t}^{t+h_{n}}}%
%BeginExpansion
{\displaystyle\int_{t}^{t+h_{n}}}
%EndExpansion
\left(  B\left(  X_{s}\right)  -\nu\right)  dW_{s},%
%TCIMACRO{\dint _{t}^{t+h_{n}}}%
%BeginExpansion
{\displaystyle\int_{t}^{t+h_{n}}}
%EndExpansion
\left(  A\left(  X_{s}\right)  -\gamma\right)  ds\right\rangle _{V^{\ast}%
}\right)  \medskip\\
\quad\leq2\left(  \mathbb{E}\left(  \left\Vert
%TCIMACRO{\dint _{t}^{t+h_{n}}}%
%BeginExpansion
{\displaystyle\int_{t}^{t+h_{n}}}
%EndExpansion
\left(  B\left(  X_{s}\right)  -\nu\right)  dW_{s}\right\Vert _{V^{\ast}}%
^{2}\right)  \right)  ^{1/2}\left(  \mathbb{E}\left(  \left\Vert
%TCIMACRO{\dint _{t}^{t+h_{n}}}%
%BeginExpansion
{\displaystyle\int_{t}^{t+h_{n}}}
%EndExpansion
\left(  A\left(  X_{s}\right)  -\gamma\right)  ds\right\Vert _{V^{\ast}}%
^{2}\right)  \right)  ^{1/2}\medskip\\
\quad\leq2\left(  \mathbb{E}\left(  \sup\limits_{s\in\left[  t,t+h_{n}\right]
}\left\Vert
%TCIMACRO{\dint _{t}^{s}}%
%BeginExpansion
{\displaystyle\int_{t}^{s}}
%EndExpansion
\left(  B\left(  X_{r}\right)  -\nu\right)  dW_{r}\right\Vert _{V^{\ast}}%
^{2}\right)  \right)  ^{1/2}\left(  \mathbb{E}\left(  \left\Vert
%TCIMACRO{\dint _{t}^{t+h_{n}}}%
%BeginExpansion
{\displaystyle\int_{t}^{t+h_{n}}}
%EndExpansion
\left(  A\left(  X_{s}\right)  -\gamma\right)  ds\right\Vert _{V^{\ast}}%
^{2}\right)  \right)  ^{1/2}\medskip\\
\quad\leq2\mathcal{C}_{H,V^{\ast}}\left(  C\mathbb{E}\left(
%TCIMACRO{\dint _{t}^{t+h_{n}}}%
%BeginExpansion
{\displaystyle\int_{t}^{t+h_{n}}}
%EndExpansion
\left\Vert B\left(  X_{s}\right)  -\nu\right\Vert _{L^{2}\left(  U;H\right)
}^{2}ds\right)  \right)  ^{1/2}\left(  h_{n}\mathbb{E}\left(
%TCIMACRO{\dint _{t}^{t+h_{n}}}%
%BeginExpansion
{\displaystyle\int_{t}^{t+h_{n}}}
%EndExpansion
\left\Vert A\left(  X_{s}\right)  -\gamma\right\Vert _{V^{\ast}}^{2}ds\right)
\right)  ^{1/2}\medskip\\
\quad\leq2\mathcal{C}_{H,V^{\ast}}\sqrt{C}h_{n}^{\frac{1}{2}}\left(
h_{n}\mathbb{E}\left(  \sup\limits_{r\in\left[  t,t+h_{n}\right]  }\left\Vert
B\left(  X_{r}\right)  -\nu\right\Vert _{L^{2}\left(  U;H\right)  }%
^{2}\right)  \right)  ^{1/2}\left(  h_{n}\mathbb{E}\left(  \sup\limits_{r\in
\left[  t,t+h_{n}\right]  }\left\Vert A\left(  X_{r}\right)  -\gamma
\right\Vert _{V^{\ast}}^{2}\right)  \right)  ^{1/2}\medskip\\
\quad\leq Ch_{n}^{3/2}.
\end{array}
\]

We split $I_{6}$ into three parts and we deduce%
\[%
\begin{array}
[c]{ccl}%
I_{6} & \leq & 2h_{n}\left(  \mathbb{E}\left(  \left\Vert x-y\right\Vert
_{V^{\ast}}^{2}\right)  \right)  ^{1/2}\left(  \mathbb{E}\left\Vert
a_{n}\right\Vert _{V^{\ast}}^{2}\right)  ^{1/2}\smallskip\\
&  & +h_{n}\left(  \mathbb{E}\left(  \left\Vert
%TCIMACRO{\dint _{t}^{t+h_{n}}}%
%BeginExpansion
{\displaystyle\int_{t}^{t+h_{n}}}
%EndExpansion
\left(  A\left(  X_{s}\right)  -\gamma\right)  ds\right\Vert _{V^{\ast}}%
^{2}\right)  \right)  ^{1/2}\left(  \mathbb{E}\left(  \left\Vert \alpha
_{n}\right\Vert _{V^{\ast}}^{2}\right)  \right)  ^{1/2}\smallskip\\
&  & +h_{n}\left(  \mathbb{E}\left(  \left\Vert
%TCIMACRO{\dint _{t}^{t+h_{n}}}%
%BeginExpansion
{\displaystyle\int_{t}^{t+h_{n}}}
%EndExpansion
\left(  B\left(  X_{s}\right)  -\nu\right)  dW_{s}\right\Vert _{V^{\ast}}%
^{2}\right)  \right)  ^{1/2}\left(  \mathbb{E}\left(  \left\Vert \alpha
_{n}\right\Vert _{V^{\ast}}\right)  \right)  ^{1/2}.
\end{array}
\]

We treat $I_{7}$ in the same manner and we get%
\[%
\begin{array}
[c]{l}%
\mathbb{E}\left(  -2\left\langle x-y+%
%TCIMACRO{\dint _{t}^{t+h_{n}}}%
%BeginExpansion
{\displaystyle\int_{t}^{t+h_{n}}}
%EndExpansion
\left(  A\left(  X_{s}\right)  -\gamma\right)  ds+%
%TCIMACRO{\dint _{t}^{t+h_{n}}}%
%BeginExpansion
{\displaystyle\int_{t}^{t+h_{n}}}
%EndExpansion
\left(  B\left(  X_{s}\right)  -\nu\right)  dW_{s},h_{n}^{\frac{1}{2}+\delta
}\beta_{n}\right\rangle _{V^{\ast}}\right)  \medskip\\
\quad\quad=-2\mathbb{E}\left(  \left\langle x-y,h_{n}^{\frac{1}{2}+\delta
}\beta_{n}\right\rangle _{V^{\ast}}\right)  +\mathbb{E}\left(  \left\langle
%TCIMACRO{\dint _{t}^{t+h_{n}}}%
%BeginExpansion
{\displaystyle\int_{t}^{t+h_{n}}}
%EndExpansion
\left(  A\left(  X_{s}\right)  -\gamma\right)  ds,h_{n}^{\frac{1}{2}+\delta
}\beta_{n}\right\rangle _{V^{\ast}}\right)  \medskip\\
\quad\quad+\mathbb{E}\left(  \left\langle
%TCIMACRO{\dint _{t}^{t+h_{n}}}%
%BeginExpansion
{\displaystyle\int_{t}^{t+h_{n}}}
%EndExpansion
\left(  B\left(  X_{s}\right)  -\nu\right)  dW_{s},h_{n}^{\frac{1}{2}+\delta
}\beta_{n}\right\rangle _{V^{\ast}}\right)
\end{array}
\]
First, the expectation of the first term is equal with zero because $\beta
_{n}$ is independent of $x-y$ and $\mathbb{E}\left(  \beta_{n}\right)  =0$,
according to (\ref{ConditiiAlfaBeta}-$\left(  iii\right)  \&\left(  iv\right)
$). For the other two, we proceed as follow. Using first the Cauchy-Schwarz
inequality, it follows that%
\[%
\begin{array}
[c]{l}%
\mathbb{E}\left(  \left\langle
%TCIMACRO{\dint _{t}^{t+h_{n}}}%
%BeginExpansion
{\displaystyle\int_{t}^{t+h_{n}}}
%EndExpansion
\left(  A\left(  X_{s}\right)  -\gamma\right)  ds,h_{n}^{\frac{1}{2}+\delta
}\beta_{n}\right\rangle _{V^{\ast}}\right)  \medskip\\
\quad\quad\leq\left(  \mathbb{E}\left(  \left\Vert
%TCIMACRO{\dint _{t}^{t+h_{n}}}%
%BeginExpansion
{\displaystyle\int_{t}^{t+h_{n}}}
%EndExpansion
\left(  A\left(  X_{s}\right)  -\gamma\right)  ds\right\Vert _{V^{\ast}}%
^{2}\right)  \right)  ^{1/2}\left(  \mathbb{E}\left(  \left\Vert h_{n}%
^{\frac{1}{2}+\delta}\beta_{n}\right\Vert _{V^{\ast}}^{2}\right)  \right)
^{1/2}\medskip\\
\quad\quad\leq h_{n}^{1/2+\delta}\left(  h_{n}\mathbb{E}\left(
%TCIMACRO{\dint _{t}^{t+h_{n}}}%
%BeginExpansion
{\displaystyle\int_{t}^{t+h_{n}}}
%EndExpansion
\left\Vert A\left(  X_{s}\right)  -\gamma\right\Vert _{V^{\ast}}^{2}ds\right)
\right)  ^{1/2}\left(  \mathbb{E}\left(  \left\Vert \beta_{n}\right\Vert
_{V^{\ast}}^{2}\right)  \right)  ^{1/2}\medskip\\
\quad\quad=h_{n}^{1+\delta}\left(  \mathbb{E}\left(
%TCIMACRO{\dint _{t}^{t+h_{n}}}%
%BeginExpansion
{\displaystyle\int_{t}^{t+h_{n}}}
%EndExpansion
\left\Vert A\left(  X_{s}\right)  -\gamma\right\Vert _{V^{\ast}}^{2}ds\right)
\right)  ^{1/2}\left(  \mathbb{E}\left(  \left\Vert \beta_{n}\right\Vert
_{V^{\ast}}^{2}\right)  \right)  ^{1/2}.
\end{array}
\]
For the third term from $I_{7},$ we have%
\[%
\begin{array}
[c]{l}%
\mathbb{E}\left(  \left\langle
%TCIMACRO{\dint _{t}^{t+h_{n}}}%
%BeginExpansion
{\displaystyle\int_{t}^{t+h_{n}}}
%EndExpansion
\left(  B\left(  X_{s}\right)  -\nu\right)  dW_{s},h_{n}^{\frac{1}{2}+\delta
}\beta_{n}\rangle_{V^{\ast}}\right\rangle \right)  \medskip\\
\quad\quad\leq\left(  \mathbb{E}\left(  \sup\limits_{s\in\left[
t,t+h_{n}\right]  }\left\Vert
%TCIMACRO{\dint _{t}^{s}}%
%BeginExpansion
{\displaystyle\int_{t}^{s}}
%EndExpansion
\left(  B\left(  X_{r}\right)  -\nu\right)  dW_{r}\right\Vert _{V^{\ast}}%
^{2}\right)  \right)  ^{1/2}\left(  \mathbb{E}\left\Vert h_{n}^{\frac{1}%
{2}+\delta}\beta_{n}\right\Vert _{V^{\ast}}^{2}\right)  ^{1/2}\medskip\\
\quad\quad\leq\mathcal{C}_{H,V^{\ast}}h_{n}^{\frac{1}{2}+\delta}\left(
C\mathbb{E}\left(
%TCIMACRO{\dint _{t}^{t+h_{n}}}%
%BeginExpansion
{\displaystyle\int_{t}^{t+h_{n}}}
%EndExpansion
\left\Vert B\left(  X_{s}\right)  -\nu\right\Vert _{L_{2}\left(  U;H\right)
}^{2}ds\right)  \right)  ^{1/2}\left(  \mathbb{E}\left\Vert \beta
_{n}\right\Vert _{V^{\ast}}^{2}\right)  ^{1/2}\medskip\\
\quad\quad\leq\mathcal{C}_{H,V^{\ast}}h_{n}^{\frac{1}{2}+\delta}\left(
C\mathbb{E}\left(
%TCIMACRO{\dint _{t}^{t+h_{n}}}%
%BeginExpansion
{\displaystyle\int_{t}^{t+h_{n}}}
%EndExpansion
\sup\limits_{r\in\left[  t,t+h_{n}\right]  }\left\Vert B\left(  X_{r}\right)
-\nu\right\Vert _{L_{2}\left(  U;H\right)  }^{2}ds\right)  \right)
^{1/2}\left(  \mathbb{E}\left\Vert \beta_{n}\right\Vert _{V^{\ast}}%
^{2}\right)  ^{1/2}\medskip\\
\quad\quad\leq\mathcal{C}_{H,V^{\ast}}h_{n}^{\frac{1}{2}+\delta}\left(
Ch_{n}\mathbb{E}\left(  \sup\limits_{r\in\left[  t,t+h_{n}\right]  }\left\Vert
B\left(  X_{r}\right)  -\nu\right\Vert _{L_{2}\left(  U;H\right)  }%
^{2}\right)  \right)  ^{1/2}\left(  \mathbb{E}\left\Vert \beta_{n}\right\Vert
_{V^{\ast}}^{2}\right)  ^{1/2}\medskip\\
\quad\quad\leq Ch_{n}^{1+\delta}\mathbb{E}\left(  \sup\limits_{r\in\left[
t,t+h_{n}\right]  }\left\Vert B\left(  X_{r}\right)  -\nu\right\Vert
_{L_{2}\left(  U;H\right)  }^{2}\right)  ^{1/2}\left(  \mathbb{E}\left\Vert
\beta_{n}\right\Vert _{V^{\ast}}^{2}\right)  ^{1/2}.
\end{array}
\]
Finally, for $I_{8}$,%
\[
\mathbb{E}\left(  \left\Vert h_{n}\alpha_{n}+h_{n}^{\frac{1}{2}+\delta}%
\beta_{n}\right\Vert _{V^{\ast}}^{2}\right)  \leq2h_{n}^{2}\mathbb{E}\left(
\left\Vert \alpha_{n}\right\Vert _{V^{\ast}}^{2}\right)  +2h_{n}^{1+2\delta
}\mathbb{E}\left(  \left\Vert \beta_{n}\right\Vert _{V^{\ast}}^{2}\right)
\]
Putting now together into (\ref{delta_distance}) all the above estimates
obtained for $I_{1}$ to $I_{8}$ and dividing by $h_{n}$ yields%
\begin{equation}%
\begin{array}
[c]{l}%
\liminf\limits_{h_{n}\rightarrow0+}\dfrac{1}{h_{n}}\left(  \mathbb{E}\left(
d_{K}^{2}\left(  X_{t+h_{n}}\right)  \right)  -\mathbb{E}\left(  d_{K}%
^{2}\left(  X_{t}\right)  \right)  \right)  \medskip\\
\quad\quad\quad\quad\leq2\mathbb{E}\left(  \langle X_{t}-y,A\left(
X_{t}\right)  -\gamma\rangle_{V^{\ast}}\right)  +3\mathbb{E}\left(  \left\Vert
B\left(  X_{t}\right)  -\nu\right\Vert _{L^{2}\left(  U;H\right)  }%
^{2}\right)  ,
\end{array}
\label{estimates for increments of expectation of d2}%
\end{equation}
for any $\mathcal{F}_{t}-$measurable $y\left(  t\right)  \in\Pi_{K}\left(
X_{t}\right)  \subset V$ and any $\mathcal{F}_{t}-$measurable vector $\left(
\gamma\left(  t\right)  ,\nu\left(  t\right)  \right)  \in\mathcal{T}%
_{K}\left(  t,y\left(  t\right)  \right)  .$ The above inequality can also be
considered by replacing $h_{n}$ and its convergence to zero by a continuous
parameter $h$ converging to zero.

Since there exists a selection $y\left(  t\right)  $ of $\Pi_{K}\left(
X_{t}\right)  $, we can take
\[
\gamma\left(  t\right)  :=A\left(  X_{t}\right)  \quad\quad\text{and}%
\quad\quad\nu\left(  t\right)  :=B\left(  X_{t}\right)  .
\]
We obtain, given the notation $\phi\left(  t\right)  :=\mathbb{E}\left(
d_{K}^{2}\left(  X_{t}\right)  \right)  $, that the contingent epiderivate
satisfy:%
\[
D_{\uparrow\phi\left(  t\right)  \left(  1\right)  }=\liminf_{h\rightarrow
0+}\frac{\left(  \mathbb{E}\left(  d_{K}^{2}\left(  X_{t+h}\right)  \right)
-\mathbb{E}\left(  d_{K}^{2}\left(  X_{t}\right)  \right)  \right)  }{h}%
\leq0.
\]
Based on the Lyapunov function theory presented in Aubin \cite{Aubin:90}, this
property of the contingent epiderivative implies that
\[
\phi\left(  t\right)  \leq\phi\left(  t_{0}\right)  =0,\quad\quad\text{for all
}t\geq t_{0}.
\]
Since we also have $\phi\left(  t\right)  \geq0$, it follows that, for every
$t\in\left[  0,T\right]  ,$ we have%
\[
\mathbb{E}\left(  d_{K}^{2}\left(  X_{t}\right)  \right)  =\int_{\Omega
}d_{K_{\omega}}^{2}\left(  X_{\omega}\left(  t\right)  \right)  dP\left(
w\right)  =0,
\]
which implies, since the integrand is nonnegative, that $d_{K_{\omega}}%
^{2}\left(  X_{\omega}\left(  t\right)  \right)  =0,$ $\mathbb{P-}a.s.$, i.e.,
the stochastic process $X$ is viable in $K.$

The theorems' proof is now completed.\hfill
\end{proof}

\begin{remark}
Alternatively, one can adopt a second approach for the last assertion from the
viability result involving $K$ and $\left(  A,B\right)  $. In order to
consider our study complete, we present it with rigorous detailed arguments.
Since Theorem \ref{main result} proves, in the first two implications, some
necessary conditions for viability, we continue to investigate, in depth, the
last one. More precisely, we focus on the more technical $^{\prime\prime
}(iii)\Rightarrow(i)^{\prime\prime}$, which establish, in fact, some
sufficient conditions for the viability to hold.
\end{remark}

\begin{theorem}
Assume that $K$ is a closed subset of\ $V$ and the maps $A:V\rightarrow
V^{\ast}$ and $B:V\rightarrow L_{2}\left(  U,H\right)  \ $are uniformly
continuous and are satisfying $\left(  H_{1}\right)  $ to $\left(
H_{7}\right)  .$ Moreover, suppose that, for every $t\in\left[  0,T\right]  $
and for every $\mathcal{F}_{t}$ - random variable $x\in L^{2}\left(
\Omega;V\right)  $ from $K$ we have that%
\[
\left(  A\left(  x\right)  ,B\left(  x\right)  \right)  \in\mathcal{T}%
_{K}\left(  t,x\right)  .
\]
Then, the following assertions hold:

$\left(  I\right)  \quad$For any fixed $\epsilon>0$, the set $S_{\epsilon
}\left(  X_{0}\right)  $ of "close enough" stochastic processes $X_{\epsilon
}=\left(  X_{\epsilon,t}\right)  _{t\in\left[  0,T\right]  }$ satisfying
$X_{\epsilon}\left(  0\right)  =X_{0}$ and%
\begin{equation}
\left\{
\begin{array}
[c]{cl}%
a)\quad & \forall t\in\left[  0,T\right]  ,\quad\mathbb{E}\left(  d_{V^{\ast}%
}^{2}\left(  X_{\epsilon,t},\mathcal{K}\right)  \right)  \leq\epsilon
^{2}\medskip\\
b)\quad & \forall t\in\left[  0,T\right]  ,\quad\mathbb{E}\left(  \left\Vert
X_{\epsilon,t}-X_{0}-%
%TCIMACRO{\dint _{0}^{t}}%
%BeginExpansion
{\displaystyle\int_{0}^{t}}
%EndExpansion
A\left(  X_{\epsilon,s}\right)  ds-%
%TCIMACRO{\dint _{0}^{t}}%
%BeginExpansion
{\displaystyle\int_{0}^{t}}
%EndExpansion
B\left(  X_{\epsilon,s}\right)  dW_{s}\right\Vert _{V^{\ast}}^{2}\right)
\leq\epsilon^{2}%
\end{array}
\right.  \label{LemaSuf}%
\end{equation}
is not empty.$\smallskip$

$\left(  II\right)  \quad$Passing to limit, as $\epsilon\rightarrow0$, there
exists a unique stochastic process $X$ on $\left[  0,T\right]  $, which is
$\mathcal{K-}$viable.
\end{theorem}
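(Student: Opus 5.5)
The plan is to follow the Aubin--Da Prato scheme: first build $\epsilon$-approximate solutions by a maximal-extension argument (assertion $(I)$), then compare them with the unique Pardoux solution and pass to the limit (assertion $(II)$).

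\textbf{Construction of approximate solutions.} Fix $\epsilon>0$. I would consider the family $\mathcal{P}_{\epsilon}$ of pairs $(\tau,X_{\epsilon})$ with $\tau\in[0,T]$ and $X_{\epsilon}$ an adapted process on $[0,\tau]$, $X_{\epsilon}(0)=X_{0}$, satisfying (\ref{LemaSuf}) on $[0,\tau]$ with the sharper right-hand sides $\epsilon^{2}\tau/T$. The family is ordered by extension. A chain has an upper bound: both inequalities in (\ref{LemaSuf}) are stable under limits in $L^{2}(\Omega;V^{\ast})$ at the supremum time, because $A,B$ are uniformly continuous and $\mathcal{K}$ is closed. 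Zorn's lemma then gives a maximal element $(\tau^{\ast},X_{\epsilon})$, and I would show $\tau^{\ast}=T$ by contradiction.

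Suppose $\tau^{\ast}<T$. Choose an $\mathcal{F}_{\tau^{\ast}}$-measurable selection $y\in\Pi_{K}(X_{\epsilon,\tau^{\ast}})$. The hypothesis $(A(y),B(y))\in\mathcal{T}_{K}(\tau^{\ast},y)$ and Definition \ref{defT} provide $h_{n}\to0$ and $\alpha_{n},\beta_{n}$ satisfying (\ref{ConditiiAlfaBeta}). On $[\tau^{\ast},\tau^{\ast}+h_{n}]$ I would set
\[
X_{\epsilon,s}:=X_{\epsilon,\tau^{\ast}}+(s-\tau^{\ast})A(y)+B(y)(W_{s}-W_{\tau^{\ast}})+\tfrac{s-\tau^{\ast}}{h_{n}}\bigl(h_{n}\alpha_{n}+h_{n}^{\frac{1}{2}+\delta}\beta_{n}\bigr).
\]
I would then check two things.
\begin{itemize}
\item[$a)$] The distance term grows only by $o(h_{n})$: at the endpoint, $X_{\epsilon}-(X_{\epsilon,\tau^{\ast}}-y)$ lies in $\mathcal{K}$, and the interpolation errors are controlled by conditions $i)$--$iv)$ of (\ref{ConditiiAlfaBeta}).
\item[$b)$] The defect in $b)$ grows by $o(h_{n})$. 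For this I would use the uniform continuity of $A,B$ together with the increment estimate (\ref{marg-dif}) of Lemma \ref{estim1} (rather, its analogue for this explicit piecewise process). This bounds $\mathbb{E}\|\int(A(X_{\epsilon,s})-A(y))ds\|^{2}_{V^{\ast}}$ and $\mathbb{E}\|\int(B(X_{\epsilon,s})-B(y))dW_{s}\|_{V^{\ast}}^{2}$; the latter is handled by BDG as in the estimates of $I_{3}$--$I_{5}$.
\end{itemize}
For $n$ large the increments are below $\epsilon^{2}h_{n}/T$. The point of scaling the tolerance by $\tau/T$ is exactly to absorb these increments, and it contradicts maximality. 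Hence $S_{\epsilon}(X_{0})\neq\emptyset$.

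\textbf{Passage to the limit.} I would not prove directly that $(X_{\epsilon})_{\epsilon}$ is Cauchy. Instead I would compare $X_{\epsilon}$ with the unique solution $X$ of (\ref{GeneralFormEq.}) given by Pardoux's Theorem. Write $X_{\epsilon,t}=X_{0}+\int_{0}^{t}A(X_{\epsilon,s})ds+\int_{0}^{t}B(X_{\epsilon,s})dW_{s}+R_{\epsilon,t}$ with $\mathbb{E}\|R_{\epsilon,t}\|_{V^{\ast}}^{2}\le\epsilon^{2}$. The aim is
\[
\sup_{t}\mathbb{E}\|X_{\epsilon,t}-X_{t}\|^{2}_{V^{\ast}}\le C\epsilon^{2},
\]
obtained through weak monotonicity $(H_{2})$, the linear growth $(H_{7})$ and Gronwall. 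Given this, $\mathbb{E}\,d_{V^{\ast}}^{2}(X_{t},\mathcal{K})\le2\mathbb{E}\|X_{t}-X_{\epsilon,t}\|^{2}+2\epsilon^{2}\to0$. Closedness of $K$ then yields $X_{t}\in K$ a.s. for each $t$, and path continuity extends this to all $t$ simultaneously. Uniqueness is inherited from Pardoux's theorem.

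\textbf{Main obstacle.} The hard step is the Gronwall estimate: $R_{\epsilon}$ is not a semimartingale, so It\^{o}'s formula cannot be applied directly to $\|X_{\epsilon}-X\|^{2}$. I would try first to exploit the explicit piecewise construction. On each subinterval $X_{\epsilon}$ is an It\^{o} process with frozen coefficients plus a jump-free correction of size $h_{n}\alpha_{n}+h_{n}^{\frac{1}{2}+\delta}\beta_{n}$. Applying It\^{o} piecewise, with $(H_{2})$ bounding the monotone part and the frozen-coefficient errors bounded by uniform continuity, the corrections should sum to $O(\epsilon)$ in the final estimate. If this fails, the fallback is to mollify $R_{\epsilon}$ in time and use the local Lipschitz bound $(H_{5})$ on the ball provided by (\ref{marg-sol}).
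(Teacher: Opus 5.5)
Your part $(I)$ breaks down in the extension step, in three ways.

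First, interior times. The contingent condition controls the distance only at the endpoint $\tau^{\ast}+h_n$. For $s$ strictly inside the window, the natural bound $\mathbb{E}\|X_{\epsilon,s}-y\|_{V^{\ast}}^{2}$ exceeds $\mathbb{E}\,d_{V^{\ast}}^{2}(X_{\epsilon,\tau^{\ast}},\mathcal{K})$ by, among other terms, $\mathbb{E}\|B(y)(W_s-W_{\tau^{\ast}})\|_{V^{\ast}}^{2}$, which is linear in $s-\tau^{\ast}$. So what you can control is $O(h_n)$ growth, not $o(h_n)$, with a constant unrelated to $\epsilon^{2}/T$. Your family imposes the growing tolerance $\epsilon^{2}\tau/T$ at \emph{every} $t\le\tau$. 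Your construction therefore gives no way to extend a maximal element with $\mathbb{E}\,d_{V^{\ast}}^{2}(X_{\epsilon,\tau^{\ast}},\mathcal{K})=\epsilon^{2}\tau^{\ast}/T$, and no contradiction with maximality follows. The missing idea is the two-tier tolerance of (\ref{Set G_eps definition}):
\begin{itemize}
\item a growing bound $\eta^{2}\frac{e^{T_X}-1}{e^{T}-1}$ imposed only at the endpoint, where tangency bounds the growth by $h\,\mathbb{E}\|x-y\|_{V^{\ast}}^{2}+o(h)$;
\item a flat bound $\eta^{2}$ on all of $[0,T_X]$, whose slack $\eta^{2}\bigl(1-\frac{e^{T_X}-1}{e^{T}-1}\bigr)>0$ absorbs the $O(h)$ interior growth once $h\le\phi$.
\end{itemize}

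Second, the defect increment is not $o(h_n)$ either. The new piece starts at $x=X_{\epsilon,\tau^{\ast}}$ but its coefficients are frozen at $y\neq x$. Hence $\mathbb{E}\|A(X_{\epsilon,s})-A(y)\|_{V^{\ast}}^{2}$ does not vanish as $h_n\to0$; it is governed by the modulus of continuity at the scale of the distance tolerance. With a single tolerance $\epsilon$ and mere uniform continuity, you cannot push the resulting rate below $\epsilon^{2}/T$. This is why the paper decouples a distance tolerance $\eta\le\epsilon$ from the defect tolerance via (\ref{eta chose}).

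Third, your interpolated correction $\frac{s-\tau^{\ast}}{h_n}\bigl(h_n\alpha_n+h_n^{\frac{1}{2}+\delta}\beta_n\bigr)$ is anticipating. The $\alpha_n,\beta_n$ are only $\mathcal{F}_{\tau^{\ast}+h_n}$-measurable (compare $\beta_h=h^{-\frac{1}{2}-\delta}\int_t^{t+h}\beta(s)\,dW_s$ in Step II), so $\int B(X_{\epsilon,s})\,dW_s$ in (b) is not an It\^{o} integral. The paper extends with frozen coefficients only and uses $\alpha_n,\beta_n$ solely to compare the endpoint with a point of $K$.

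In $(II)$ you depart from the paper. The paper shows the selected $X_{\epsilon}$ are Cauchy in $L^{2}(\Omega;V^{\ast})$ by Gronwall, using a Lipschitz constant $Lip_{A,B}$ measured in $V^{\ast}$, which $(H_5)$, a local $V\to V^{\ast}$ bound, does not provide. Comparing with the Pardoux solution is a sensible alternative: it would give existence, uniqueness and the solution property at once. However, the estimate $\sup_t\mathbb{E}\|X_{\epsilon,t}-X_t\|_{V^{\ast}}^{2}\le C\epsilon^{2}$ that carries the whole argument is exactly what you leave open, and the tools you name do not supply it.
\begin{itemize}
\item $(H_2)$ lives in the $V^{\ast}$--$V$ pairing, yields $H$-norm control, and needs both arguments in $V$.
\item Your pieces $x+(s-\tau)A(y)+\cdots$, with $A(y)\in V^{\ast}$, are only $V^{\ast}$-valued. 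So even $A(X_{\epsilon,s})$ in (b) is undefined, a defect the paper's construction shares.
\item $R_{\epsilon}$ is small only in $L^{2}(\Omega;V^{\ast})$. Applying It\^{o}'s formula to $\|X_{\epsilon}-R_{\epsilon}-X\|_{H}^{2}$ produces $\langle A(X_{\epsilon})-A(X),R_{\epsilon}\rangle$, which would need $R_{\epsilon}$ small in $V$.
\item The $(H_5)$ fallback fails for the same reason: it needs closeness in $V$ and a $V$-ball containing $X_{\epsilon}$, while (\ref{marg-sol}) bounds only the true solution.
\end{itemize}

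Lastly, $\mathbb{E}\,d_{V^{\ast}}^{2}(X_t,\mathcal{K})=0$ only places $X_t$ in the $V^{\ast}$-closure of $K$. Concluding $X_t\in K$ needs $K$ closed in $V^{\ast}$, and passing to all $t$ via $H$-continuity of paths needs $K$ closed in $H$; closedness in $V$ is not enough.
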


\begin{proof}
$\left(  I\right)  \quad$For an arbitrary fixed $\epsilon>0$, one can select
an $\eta:=\eta_{\epsilon}\in(0,\epsilon]$ such that%
\begin{equation}
\theta_{A\wedge B}^{conc}\left(  \eta^{2}\right)  :=\operatorname{Concave}%
\left\{  \max\left\{  \theta_{A},\theta_{B}\right\}  \right\}  \left(
\eta^{2}\right)  \leq\frac{\epsilon^{2}}{2T+6\mathcal{C}_{H,V^{\ast}}^{2}},
\label{eta chose}%
\end{equation}
where $\theta_{A},\theta_{B}$\footnote{Given a function $f,$ set
$\theta:\left[  0,T\right]  \rightarrow\mathbb{R}_{+}$, $\theta_{f}\left(
\eta\right)  :=\sup_{\left\Vert x-y\right\Vert ^{2}\leq\eta}\left\Vert
f\left(  x\right)  -f\left(  y\right)  \right\Vert ^{2}$ as being the concave
modulus of continuity of $f.$ Then $\theta$ is a non decreasing, subadditive
and concave function which dominates the variation of $f.$ Usually, it is
chosen as the smallest concave function with the above properties and it is
the concave span of the classical modulus of continuity of the given function
$f$.} are the concave modulus of continuity for $A$ and $B.$ Since the maximum
of two concave function is not necessary concave, in order to maximize both
modulus with a concave function, one can chose the concave envelope of their
maximum. It can also be controlled to small values since the involved two
modulus have this property.

We denote by $G_{\epsilon}\left(  X_{0}\right)  $ the set of $\epsilon
-$dependent pairs $\left(  T_{X},X\left(  \cdot\right)  \right)  :=\left(
T_{X_{\epsilon}},X_{\epsilon}\left(  \cdot\right)  \right)  $ where $T_{X}%
\in\left[  0,T\right]  $ and $X\left(  \cdot\right)  $ is a stochastic process
that satisfies $X\left(  0\right)  =X_{0}$ and
\begin{equation}
\left\{
\begin{array}
[c]{ll}%
a)\quad & \mathbb{E}\left(  d_{V^{\ast}}^{2}\left(  X_{T_{X}},\mathcal{K}%
\right)  \right)  \leq\eta^{2}\dfrac{e^{T_{X}}-1}{e^{T}-1}\leq\eta
^{2}~,\medskip\\
b)\quad & \forall t\in\left[  0,T_{X}\right]  ,\quad\mathbb{E}\left(
d_{V^{\ast}}^{2}\left(  X_{t},\mathcal{K}\right)  \right)  \leq\eta
^{2}~,\medskip\\
c)\quad & \forall t\in\left[  0,T_{X}\right]  ,\quad\mathbb{E}\left(
\left\Vert X_{t}-X_{0}-%
%TCIMACRO{\dint _{0}^{t}}%
%BeginExpansion
{\displaystyle\int_{0}^{t}}
%EndExpansion
A\left(  X_{s}\right)  ds-%
%TCIMACRO{\dint _{0}^{t}}%
%BeginExpansion
{\displaystyle\int_{0}^{t}}
%EndExpansion
B\left(  X_{s}\right)  dW_{s}\right\Vert _{V^{\ast}}^{2}\right)  \leq
\epsilon^{2},
\end{array}
\right.  \label{Set G_eps definition}%
\end{equation}
where, we denoted, as usual, $d_{V^{\ast}}^{2}\left(  X,\mathcal{K}\right)
=\inf_{u\in\mathcal{K}}\left\Vert X-u\right\Vert _{V^{\ast}}^{2}.$

Obviously, $G_{\epsilon}\left(  X_{0}\right)  \neq\emptyset$, since $\left(
0,X_{0}\right)  \in G_{\epsilon}\left(  X_{0}\right)  $. On the other hand,
$G_{\epsilon}\left(  X_{0}\right)  $ is an inductive set with respect to the
order relation%
\[
\left(  T_{X_{1}},X_{1}\left(  \cdot\right)  \right)  \preceq\left(  \left(
T_{X_{2}},X_{2}\left(  \cdot\right)  \right)  \right)
\]
defined by%
\[
T_{X_{1}}\leq T_{X_{2}}\quad\text{and}\quad X_{2}\left(  \cdot\right)
|_{\left[  0,T_{X_{1}}\right]  }=X_{1}\left(  \cdot\right)  .
\]
Consequently, by Zorn's Lemma, it follows that there exists a maximal element
$\left(  T_{X},X\left(  \cdot\right)  \right)  \in G_{\epsilon}\left(
X_{0}\right)  $ and we can suppose, for the sake of contradiction, that
$T_{X}$ is not the maturity date of the state process, i.e. $T_{X}<T.$ Our
goal is to extend $X\left(  \cdot\right)  $ to a stochastic process $\hat
{X}\left(  \cdot\right)  $ on an interval $\left[  T_{X},T_{\hat{X}}\right]  $
where $T_{\hat{X}}>T_{X}$ .$\smallskip$

As a random variable, the projection map $\Pi_{K_{\omega}}\left(  X_{\omega
}\left(  T_{X}\right)  \right)  $ is $\mathcal{F}_{T_{X}}$-measurable, since
$K_{\omega}$ and $X_{\omega}\left(  T_{X}\right)  $ are $\mathcal{F}_{T_{X}}$
measurable random set/variable. Therefore, there exists an $\mathcal{F}%
_{T_{X}}$ measurable selection%
\[
y_{\omega}\in\Pi_{K_{\omega}}\left(  X_{\omega,T_{X}}\right)  ,
\]
as being a projection of the random variable $X_{T_{X}}$ onto the random
set-valued variable $K.$ Denote $x=X_{T_{X}}$ and we may choose a projection
$y\in\Pi_{K}\left(  x\right)  .$ We know that%
\[
\left(  A\left(  y\right)  ,B\left(  y\right)  \right)  \in\mathcal{T}%
_{K}\left(  T_{X},y\right)  .
\]
This means that there exist sequences $t_{n}>0$ converging to $0$ and
$\mathcal{F}_{t}$-measurable $\alpha_{n}$ and $\beta_{n}$ satisfying the
assumptions (\ref{ConditiiAlfaBeta}) and, for every $n\geq0,$%
\[
y+B\left(  y\right)  \left(  W_{T_{X}+t_{n}}-W_{T_{X}}\right)  +A\left(
y\right)  t_{n}+t_{n}\alpha_{n}+t_{n}^{\frac{1}{2}+\delta}\beta_{n}\in
K,\quad\mathbb{P-}a.s.
\]
Since $t_{n}$ converges to zero it is possible to choose a sufficiently large
positive integer $N$ such that $t_{N}\leq\phi$, with%
\[
\phi:=\min\left(  \eta,\frac{\eta^{2}\left(  1-\dfrac{e^{T_{X}}-1}{e^{T}%
-1}\right)  }{\eta^{2}+\left(  1+T+3\mathcal{C}_{H,V^{\ast}}^{2}\right)
c^{2}}\right)  >0,
\]
with $\eta$ given by (\ref{eta chose}) and%
\[
c^{2}:=\max\left(  \mathbb{E}\left(  \left\Vert A\left(  y\right)  \right\Vert
_{V^{\ast}}^{2}\right)  ,\mathbb{E}\left(  \left\Vert B\left(  y\right)
\right\Vert _{L_{2}\left(  U;H\right)  }^{2}\right)  \right)  <+\infty.
\]
Define also%
\[
\rho:=\frac{\eta}{\sqrt{\left(  e^{T}-1\right)  \left(  3+2h\right)  }}>0.
\]
where $h:=t_{N}$.

Set $T_{\hat{X}}:=T_{X}+h>T_{X}~$. We define $\hat{X}_{t}$ on the extension
time interval $\left[  T_{X},T_{\hat{X}}\right]  $ as follows%
\[
\hat{X}_{t}:=x+\left(  t-T_{X}\right)  A\left(  y\right)  +B\left(  y\right)
\left(  W_{t}-W_{T_{X}}\right)  ,
\]
or, under integral form,%
\[
\hat{X}_{t}=x+\int_{T_{X}}^{t}A\left(  y\right)  ds+\int_{T_{X}}^{t}B\left(
y\right)  dW_{s}.
\]

Hence, one can easily see that%
\[%
\begin{array}
[c]{c}%
d_{V^{\ast}}^{2}\left(  \hat{X}_{T_{\hat{X}}},\mathcal{K}\right)  -d_{V^{\ast
}}^{2}\left(  \hat{X}_{T_{X}},\mathcal{K}\right)  \leq\left\Vert
x-y-h\alpha_{n}-h^{\frac{1}{2}+\delta}\beta_{n}\right\Vert _{V^{\ast}}%
^{2}-\left\Vert x-y\right\Vert _{V^{\ast}}^{2}\medskip\\
=\left\Vert h\alpha_{n}+h^{\frac{1}{2}+\delta}\beta_{n}\right\Vert _{V^{\ast}%
}^{2}-2\langle x-y,h\alpha_{n}\rangle_{V^{\ast}}-2\left\langle x-y,h^{\frac
{1}{2}+\delta}\beta_{n}\right\rangle _{V^{\ast}}.
\end{array}
\]
Applying the expectation in both sides of this inequality, we provide
estimates for each term on the right hand-side. First, we observe that%
\[
\mathbb{E}\left(  \left\Vert h\alpha_{n}+h^{\frac{1}{2}+\delta}\beta
_{n}\right\Vert _{V^{\ast}}^{2}\right)  \leq2h\left(  h\mathbb{E}\left(
\left\Vert \alpha_{n}\right\Vert _{V^{\ast}}^{2}\right)  +h^{2\delta
}\mathbb{E}\left(  \left\Vert \beta_{n}\right\Vert _{V^{\ast}}^{2}\right)
\right)
\]
and%
\[
\mathbb{E}\left(  \langle x-y,h\alpha_{n}\rangle_{V^{\ast}}\right)  \leq
h\left(  \mathbb{E}\left(  \left\Vert x-y\right\Vert _{V^{\ast}}^{2}\right)
\right)  ^{1/2}\left(  \mathbb{E}\left\Vert \alpha_{n}\right\Vert _{V^{\ast}%
}^{2}\right)  ^{1/2}%
\]
We also have, as before, $\mathbb{E}(\langle x-y,h^{\delta+1/2}\beta
_{n}\rangle_{V^{\ast}})=0$, due to the properties of $\beta_{n}$ from
Definition \ref{defT}.

By (\ref{Set G_eps definition}$-\left(  a\right)  $),%
\[%
\begin{array}
[c]{l}%
\mathbb{E}\left(  d_{V^{\ast}}^{2}(\hat{X}_{T_{\hat{X}}},\mathcal{K})\right)
=\mathbb{E}\left(  d_{V^{\ast}}^{2}(\hat{X}_{T_{X}+h},\mathcal{K})\right)
\bigskip\\
\quad\quad\leq\mathbb{E}\left(  d_{V^{\ast}}^{2}(\hat{X}_{T_{X}}%
,\mathcal{K)}\right)  +2h\left(  \mathbb{E}(\left\Vert x-y\right\Vert
_{V^{\ast}}^{2})\right)  ^{1/2}\left(  \mathbb{E}(\left\Vert \alpha
_{n}\right\Vert _{V^{\ast}}^{2})\right)  ^{1/2}\bigskip\\
\quad\quad\quad+2h\left(  h\mathbb{E}\left(  \left\Vert \alpha_{n}\right\Vert
_{V^{\ast}}^{2}\right)  \right)  +h^{2\delta}\mathbb{E}\left(  \left\Vert
\beta_{n}\right\Vert _{V^{\ast}}^{2}\right)  \bigskip\\
\quad\quad\leq\mathbb{E}\left(  d_{V^{\ast}}^{2}\left(  \hat{X}_{T_{X}%
},\mathcal{K}\right)  \right)  +h\left(  \mathbb{E}\left(  \left\Vert
x-y\right\Vert _{V^{\ast}}^{2}\right)  +\left(  1+2h\right)  \mathbb{E}\left(
\left\Vert \alpha_{n}\right\Vert _{V^{\ast}}^{2}\right)  +2\mathbb{E}\left(
\left\Vert \beta_{n}\right\Vert _{V^{\ast}}^{2}\right)  \right)  \bigskip\\
\quad\quad\overset{(\ref{Set G_eps definition}-\left(  a\right)  )}{\leq}%
\eta^{2}\dfrac{e^{T_{X}}-1}{e^{T}-1}+h\left(  \eta^{2}\dfrac{e^{T_{X}}%
-1}{e^{T}-1}+\left(  3+2h\right)  \rho^{2}\right)  \leq\eta^{2}\dfrac
{e^{T_{X}}-1}{e^{T}-1}+\dfrac{h(e^{T_{X}}-1+1)}{e^{T}-1}\eta^{2}\bigskip\\
\quad\quad\leq\eta^{2}\left(  \dfrac{e^{T_{X}}\left(  1+h)-1\right)  }%
{e^{T}-1}\right)  \leq\eta^{2}\dfrac{e^{T_{\hat{X}}}-1}{e^{T}-1}%
,\quad\text{since }1+h\leq e^{h}.
\end{array}
\]
It follows that $\hat{X}$ satisfies\ (\ref{Set G_eps definition}$-\left(
a\right)  $) for $T_{\hat{X}}.$

Furthermore, for proving (\ref{Set G_eps definition}$-\left(  b\right)  $),
observe that, for any $t\in\left[  T_{X},T_{\hat{X}}\right]  ,$%
\[
d_{V^{\ast}}^{2}\left(  \hat{X}_{t},\mathcal{K}\right)  \leq\left\Vert \hat
{X}_{t}-y\right\Vert _{V^{\ast}}^{2}.
\]
Hence%
\[%
\begin{array}
[c]{ccl}%
\left\Vert \hat{X}_{t}-y\right\Vert _{V^{\ast}}^{2} & = & \left\Vert
x-y+\left(  t-T_{X}\right)  A\left(  y\right)  +\left(  W_{t}-W_{T_{X}%
}\right)  B\left(  y\right)  \right\Vert _{V^{\ast}}^{2}\medskip\\
& = & d_{V^{\ast}}^{2}\left(  x,\mathcal{K}\right)  +2\left\langle x-y,\left(
t-T_{X}\right)  A\left(  y\right)  +B\left(  y\right)  \left(  W_{t}-W_{T_{X}%
}\right)  \right\rangle _{V^{\ast}}\medskip\\
& + & \left\Vert \left(  t-T_{X}\right)  A\left(  y\right)  +B\left(
y\right)  \left(  W_{t}-W_{T_{X}}\right)  \right\Vert _{V^{\ast}}^{2}~.
\end{array}
\]
By taking the expectations, we obtain%
\[%
\begin{array}
[c]{l}%
\mathbb{E}\left(  \left\Vert \hat{X}_{t}-y\right\Vert _{V^{\ast}}^{2}\right)
-\mathbb{E}\left(  d_{V^{\ast}}^{2}\left(  \hat{X}_{T_{X}},\mathcal{K}\right)
\right)  \medskip\\
\quad\quad=2\mathbb{E}\left(  \left\langle x-y,\left(  t-T_{X}\right)
A\left(  y\right)  \right\rangle _{V^{\ast}}\right)  +2\mathbb{E}\left(
\left\langle x-y,B\left(  y\right)  \left(  W_{t}-W_{T_{X}}\right)
\right\rangle _{V^{\ast}}\right)  \medskip\\
\quad\quad+\mathbb{E}\left(  \left\Vert \left(  t-T_{X}\right)  A\left(
y\right)  \right\Vert _{V^{\ast}}^{2}\right)  +\mathbb{E}\left(  \left\Vert
B\left(  y\right)  \left(  W_{t}-W_{T_{X}}\right)  \right\Vert _{V^{\ast}}%
^{2}\right)  \medskip\\
\quad\quad+2\mathbb{E}\left(  \left\langle \left(  t-T_{X}\right)  A\left(
y\right)  ,B\left(  y\right)  \left(  W_{t}-W_{T_{X}}\right)  \right\rangle
_{V^{\ast}}\right)  .
\end{array}
\]
Remark that%
\[
\mathbb{E}\left(  \left\langle x-y,\int_{0}^{t_{n}}B\left(  y\right)
dW_{s}\right\rangle _{V^{\ast}}\right)  =\mathbb{E}\left(  \int_{0}^{t_{n}%
}\left\langle x-y,B\left(  y\right)  dW_{s}\right\rangle _{V^{\ast}}\right)
.
\]
We also note that%
\[%
\begin{array}
[c]{l}%
\mathbb{E}\left(  \left\Vert B\left(  y\right)  \left(  W_{t}-W_{T_{X}%
}\right)  \right\Vert _{V^{\ast}}^{2}\right)  =\mathbb{E}\left(  \left\Vert
%TCIMACRO{\dint _{T_{X}}^{t}}%
%BeginExpansion
{\displaystyle\int_{T_{X}}^{t}}
%EndExpansion
B\left(  y\right)  dW_{s}\right\Vert _{V^{\ast}}^{2}\right)  \medskip\\
\quad\quad\leq\mathbb{E}\left(  \sup\limits_{s\in\left[  T_{X},t\right]
}\left\Vert
%TCIMACRO{\dint _{T_{X}}^{s}}%
%BeginExpansion
{\displaystyle\int_{T_{X}}^{s}}
%EndExpansion
B\left(  y\right)  dW_{s}\right\Vert _{V^{\ast}}^{2}\right)  \leq
3\mathcal{C}_{H,V^{\ast}}^{2}\mathbb{E}\left(
%TCIMACRO{\dint _{T_{X}}^{t}}%
%BeginExpansion
{\displaystyle\int_{T_{X}}^{t}}
%EndExpansion
\left\Vert B\left(  y\right)  \right\Vert _{L_{2}\left(  U;H)\right)  }%
^{2}ds\right)  \medskip\\
\quad\quad\leq3\mathcal{C}_{H,V^{\ast}}^{2}\left(  t-T_{X}\right)
\mathbb{E}\left(  \left\Vert B\left(  y\right)  \right\Vert _{L_{2}\left(
U;H\right)  }^{2}\right)  .
\end{array}
\]
Using the above estimates, we obtain%
\[%
\begin{array}
[c]{l}%
\mathbb{E}\left(  \left\Vert \hat{X}_{t}-y\right\Vert _{V^{\ast}}^{2}\right)
-\mathbb{E}\left(  d_{V^{\ast}}^{2}\left(  \hat{X}_{T_{X}},\mathcal{K}\right)
\right)  \leq\left(  t-T_{X}\right)  \left(  \mathbb{E}(\left\Vert
x-y\right\Vert _{V^{\ast}}^{2})+\mathbb{E(}\left\Vert A\left(  y\right)
\right\Vert _{V^{\ast}}^{2})\right)  \bigskip\\
\quad\quad+\left(  t-T_{X}\right)  ^{2}\mathbb{E}\left(  \left\Vert A\left(
y\right)  \right\Vert _{V^{\ast}}^{2}\right)  +3\mathcal{C}_{H,V^{\ast}}%
^{2}\left(  t-T_{X}\right)  \mathbb{E}\left(  \left\Vert B\left(  y\right)
\right\Vert _{L_{2}\left(  U;H\right)  }^{2}\right)  \bigskip\\
\quad\quad\leq\left(  t-T_{X}\right)  \left(  \mathbb{E}\left(  d_{V^{\ast}%
}^{2}\left(  \hat{X}_{T_{X}},\mathcal{K}\right)  \right)  +\left(  1+\left(
t-T_{X}\right)  \right)  \mathbb{E}\left(  \left\Vert A\left(  y\right)
\right\Vert _{V^{\ast}}^{2}\right)  +3\mathcal{C}_{H,V^{\ast}}^{2}%
\mathbb{E}\left(  \left\Vert B\left(  y\right)  \right\Vert _{L_{2}\left(
U;H\right)  }^{2}\right)  \right)  .
\end{array}
\]
Since $c^{2}=\max\left\{  \mathbb{E}(\left\Vert A\left(  y\right)  \right\Vert
_{V^{\ast}}^{2}),\mathbb{E}(\left\Vert B\left(  y\right)  \right\Vert
_{L_{2}\left(  U;H\right)  }^{2})\right\}  $ and%
\[
\phi\left(  \eta^{2}+\left(  1+T+3\mathcal{C}_{H,V^{\ast}}^{2}\right)
c^{2}\right)  \leq\eta^{2}\left(  1-\frac{e^{T_{X}}-1}{e^{T}-1}\right)  ,
\]
it follows that%
\[%
\begin{array}
[c]{ccl}%
\mathbb{E}\left(  \left\Vert \hat{X}_{t}-y\right\Vert _{V^{\ast}}^{2}\right)
& \leq & \eta^{2}\dfrac{e^{T_{X}}-1}{e^{T}-1}+\left(  t-T_{X}\right)  \left(
\eta^{2}\dfrac{e^{T_{X}}-1}{e^{T}-1}+\left(  1+T+3\mathcal{C}_{H,V^{\ast}}%
^{2}\right)  c^{2}\right)  \medskip\\
& \leq & \eta^{2}\dfrac{e^{T_{X}}-1}{e^{T}-1}+\phi\left(  \eta^{2}+\left(
1+T+3\mathcal{C}_{H,V^{\ast}}^{2}\right)  c^{2}\right)  \medskip\\
& \leq & \eta^{2}.
\end{array}
\]
We conclude that $\hat{X}_{\cdot}$ satisfies (\ref{Set G_eps definition}%
$-\left(  b\right)  $):%
\begin{equation}
\mathbb{E}\left(  d_{V^{\ast}}^{2}\left(  \hat{X}_{t},\mathcal{K}\right)
\right)  \leq\mathbb{E}\left(  \left\Vert \hat{X}_{t}-y\right\Vert _{V^{\ast}%
}^{2}\right)  \leq\eta^{2}. \label{important boundedness}%
\end{equation}

In order to produce (\ref{Set G_eps definition}$-\left(  c\right)  $), we
start with%
\[%
\begin{array}
[c]{l}%
\mathbb{E}\left(  \left\Vert \hat{X}_{t}-x-%
%TCIMACRO{\dint _{T_{X}}^{t}}%
%BeginExpansion
{\displaystyle\int_{T_{X}}^{t}}
%EndExpansion
A(\hat{X}_{s})ds-%
%TCIMACRO{\dint _{T_{X}}^{t}}%
%BeginExpansion
{\displaystyle\int_{T_{X}}^{t}}
%EndExpansion
B(\hat{X}_{s})dW_{s}\right\Vert _{V^{\ast}}^{2}\right)  \medskip\\
\quad=\mathbb{E}\left(  \left\Vert
%TCIMACRO{\dint _{T_{X}}^{t}}%
%BeginExpansion
{\displaystyle\int_{T_{X}}^{t}}
%EndExpansion
\left(  A\left(  y\right)  -A(\hat{X}_{s})\right)  ds+%
%TCIMACRO{\dint _{T_{X}}^{t}}%
%BeginExpansion
{\displaystyle\int_{T_{X}}^{t}}
%EndExpansion
\left(  B\left(  y\right)  -B(\hat{X}_{s})\right)  dW_{s}\right\Vert
_{V^{\ast}}^{2}\right)  \medskip\\
\quad\leq2\left(  t-T_{X}\right)  \mathbb{E}\left(
%TCIMACRO{\dint _{T_{X}}^{t}}%
%BeginExpansion
{\displaystyle\int_{T_{X}}^{t}}
%EndExpansion
\left\Vert A\left(  y\right)  -A(\hat{X}_{s})\right\Vert _{V^{\ast}}%
^{2}ds\right)  +2\mathbb{E}\left(  \left\Vert
%TCIMACRO{\dint _{T_{X}}^{t}}%
%BeginExpansion
{\displaystyle\int_{T_{X}}^{t}}
%EndExpansion
\left(  B\left(  y\right)  -B(\hat{X}_{s})\right)  dW_{s}\right\Vert
_{V^{\ast}}^{2}\right)
\end{array}
\]
By the Burkholder--Davis--Gundy inequality it follows%
\begin{align*}
&  \mathbb{E}\left(  \left\Vert \hat{X}_{t}-x-\int_{T_{X}}^{t}A(\hat{X}%
_{s})ds-\int_{T_{X}}^{t}B(\hat{X}_{s})dW_{s}\right\Vert _{V^{\ast}}^{2}\right)
\\
&  \leq2T\mathbb{E}\left(  \int_{T_{X}}^{t}\left\Vert A\left(  y\right)
-A(\hat{X}_{s})\right\Vert _{V^{\ast}}^{2}ds\right)  +2\mathbb{E}\left(
\sup_{s\in\left[  T_{X},t\right]  }\left\Vert \int_{T_{X}}^{s}(B\left(
y\right)  -B(\hat{X}_{s}))dW_{s}\right\Vert _{V^{\ast}}^{2}\right) \\
&  \leq2T\mathbb{E}\left(  \int_{T_{X}}^{t}\left\Vert A\left(  y\right)
-A(\hat{X}_{s})\right\Vert _{V^{\ast}}^{2}ds\right)  +6\mathcal{C}_{H,V^{\ast
}}^{2}\mathbb{E}\left(
%TCIMACRO{\dint _{T_{X}}^{t}}%
%BeginExpansion
{\displaystyle\int_{T_{X}}^{t}}
%EndExpansion
\left\Vert B\left(  y\right)  -B(\hat{X}_{s})\right\Vert _{L_{2}\left(
U;H\right)  }^{2}ds\right)  .
\end{align*}
Using the concavity of the "joint modulus of continuity" $\theta_{A\wedge
B}^{conc}$ of the functions $A$ and $B$ and (\ref{eta chose}) we obtain%
\[%
\begin{array}
[c]{l}%
\mathbb{E}\left(  \left\Vert \hat{X}_{t}-x-%
%TCIMACRO{\dint _{T_{X}}^{t}}%
%BeginExpansion
{\displaystyle\int_{T_{X}}^{t}}
%EndExpansion
A(\hat{X}_{s})ds-%
%TCIMACRO{\dint _{T_{X}}^{t}}%
%BeginExpansion
{\displaystyle\int_{T_{X}}^{t}}
%EndExpansion
B(\hat{X}_{s})dW_{s}\right\Vert _{V^{\ast}}^{2}\right)  \medskip\\
\quad\leq2T\left(  \mathbb{E}\left(
%TCIMACRO{\dint _{T_{X}}^{t}}%
%BeginExpansion
{\displaystyle\int_{T_{X}}^{t}}
%EndExpansion
\theta_{A}\left(  \left\Vert y-\hat{X}_{s}\right\Vert _{V^{\ast}}^{2}\right)
ds\right)  \right)  +6\mathcal{C}_{H,V^{\ast}}^{2}\left(  \mathbb{E}\left(
%TCIMACRO{\dint _{T_{X}}^{t}}%
%BeginExpansion
{\displaystyle\int_{T_{X}}^{t}}
%EndExpansion
\theta_{B}\left(  \left\Vert y-\hat{X}_{s}\right\Vert _{V^{\ast}}^{2}\right)
ds\right)  \right)  \medskip\\
\quad\leq\left(  2T+6\mathcal{C}_{H,V^{\ast}}^{2}\right)  \mathbb{E}\left(
%TCIMACRO{\dint _{T_{X}}^{t}}%
%BeginExpansion
{\displaystyle\int_{T_{X}}^{t}}
%EndExpansion
\theta_{A\wedge B}^{conc}\left(  \left\Vert y-\hat{X}_{s}\right\Vert
_{V^{\ast}}^{2}\right)  ds\right)  \leq\left(  2T+6\mathcal{C}_{H,V^{\ast}%
}^{2}\right)  \theta_{A\wedge B}^{conc}\left(
%TCIMACRO{\dint _{T_{X}}^{t}}%
%BeginExpansion
{\displaystyle\int_{T_{X}}^{t}}
%EndExpansion
\mathbb{E}\left(  \left\Vert y-\hat{X}_{s}\right\Vert _{V^{\ast}}^{2}\right)
ds\right)  \medskip\\
\quad\leq\left(  2T+6\mathcal{C}_{H,V^{\ast}}^{2}\right)  \theta_{A\wedge
B}^{conc}\left(  \eta^{2}\right)  \leq\epsilon^{2},\quad\text{due to
(\ref{eta chose}).}%
\end{array}
\]
This implies that property (\ref{Set G_eps definition}$-\left(  c\right)  $)
is satisfied.

As consequence, we extended $X$ to a process $\hat{X}$ on the time interval
$\left[  T_{X},T_{\hat{X}}\right]  $, which is contradicts the maximality of
$\left(  T_{X},X\right)  $. The first part of the Proposition is now
completed.\medskip

$\left(  II\right)  \quad$One can rewrite (\ref{LemaSuf}$-\left(  b\right)
$), for every fixed $\epsilon>0$, as:%
\[
X_{\epsilon,t}\left(  t\right)  =X_{0}+\int_{0}^{t}A\left(  X_{\epsilon
,s}\right)  ds+\int_{0}^{t}B\left(  X_{\epsilon,s}\right)  dW_{s}%
+\zeta_{\epsilon,t}%
\]
where%
\[
\sup_{t\in\lbrack0,T]}\mathbb{E}\left(  \left\Vert \zeta_{\epsilon
,t}\right\Vert _{V^{\ast}}^{2}\right)  \leq\epsilon^{2}.
\]
We can advance now by showing that, making $\epsilon$ small enough, we derive
a unique solution for the initial equation (\ref{GeneralFormEq.}), which is
$\mathcal{K}-$viable. Usually, this can be achieved by proving that the
approximating process forms a Cauchy sequence. However, we should mention
that, according to $\left(  I\right)  ,$ for every $\epsilon,,\eta>0,$
$S_{\epsilon}\left(  X_{0}\right)  $ and $S_{\eta}\left(  X_{0}\right)  $ are
nonempty sets, with their cardinals, possible, greater than $1.$ This means
that we do not have, necessary, the same explicit formula, obtained by a
penalizing argument, for each element of the approximating sequence of
solutions. But, given the structural form of each set from the family of
"Cauchy sets", we can select an element from each one (for instance, the
maximal one constructed in the previous part of this Proposition) to play its
role in proving that we have obtained, in fact a Cauchy approximating
sequence, which leads to a unique limit process.$\smallskip$

Indeed, for any $\varepsilon,\eta>0$, we have%
\[%
\begin{array}
[c]{l}%
\mathbb{E}\left(  \left\Vert X_{\epsilon,t}-X_{\eta,t}\right\Vert _{V^{\ast}%
}^{2}\right)  \leq6\mathbb{E}\left(  \left\Vert \zeta_{\epsilon,t}\right\Vert
_{V^{\ast}}^{2}\right)  +6\mathbb{E}\left(  \left\Vert \zeta_{\eta
,t}\right\Vert _{V^{\ast}}^{2}\right)  \medskip\\
\quad\quad+3\mathbb{E}\left(  \left\Vert
%TCIMACRO{\dint _{0}^{t}}%
%BeginExpansion
{\displaystyle\int_{0}^{t}}
%EndExpansion
\left(  A\left(  X_{\epsilon,s}\right)  -A\left(  X_{\eta,s}\right)  \right)
ds\right\Vert _{V^{\ast}}^{2}\right)  +3\mathbb{E}\left(  \left\Vert
%TCIMACRO{\dint _{0}^{t}}%
%BeginExpansion
{\displaystyle\int_{0}^{t}}
%EndExpansion
\left(  B\left(  X_{\epsilon,s}\right)  -B\left(  X_{\eta,s}\right)  \right)
dW_{s}\right\Vert _{V^{\ast}}^{2}\right)  .
\end{array}
\]
Using the Burkholder--Davis--Gundy inequality and the Lipschitz property of
$A$ and $B,$ we obtain%
\[%
\begin{array}
[c]{ccl}%
\mathbb{E}\left(  \left\Vert X_{\epsilon,t}-X_{\eta,t}\right\Vert _{V^{\ast}%
}^{2}\right)  & \leq & 3\mathbb{E}\left(
%TCIMACRO{\dint _{0}^{t}}%
%BeginExpansion
{\displaystyle\int_{0}^{t}}
%EndExpansion
\left\Vert A\left(  X_{\epsilon,s}\right)  -A\left(  X_{\eta,s}\right)
\right\Vert _{V^{\ast}}^{2}ds\right)  \medskip\\
& + & 9\mathbb{E}\left(
%TCIMACRO{\dint _{0}^{t}}%
%BeginExpansion
{\displaystyle\int_{0}^{t}}
%EndExpansion
\left\Vert B\left(  X_{\epsilon,s}\right)  -B\left(  X_{\eta,s}\right)
\right\Vert _{L_{2}\left(  U;H\right)  }^{2}ds\right)  +6\mathbb{E}\left(
\left\Vert \zeta_{\epsilon,t}\right\Vert _{V^{\ast}}^{2}\right)
+6\mathbb{E}\left(  \left\Vert \zeta_{\eta,t}\right\Vert _{V^{\ast}}%
^{2}\right)  \medskip\\
& \leq & 12Lip_{A,B}^{2}\left(
%TCIMACRO{\dint _{0}^{t}}%
%BeginExpansion
{\displaystyle\int_{0}^{t}}
%EndExpansion
\mathbb{E}\left(  \left\Vert X_{\epsilon,s}-X_{\eta,s}\right\Vert _{V^{\ast}%
}^{2}\right)  ds\right)  +6\left(  \epsilon^{2}+\eta^{2}\right)  .
\end{array}
\]
By Gronwall's Lemma, we deduce that%
\[
\mathbb{E}\left(  \left\Vert X_{\epsilon,t}-X_{\eta,t}\right\Vert _{V^{\ast}%
}^{2}\right)  \leq6\left(  \epsilon^{2}+\eta^{2}\right)  e^{12Lip_{A,B}^{2}%
t}~.
\]
This implies that our sequence is a Cauchy one and it converges to a limit
process $X~.$ Moreover, we apply ((\ref{LemaSuf})$-\left(  ii\right)  $) and
it follows that%
\[
\mathbb{E}\left(  d_{V^{\ast}}^{2}\left(  X_{t},\mathcal{K}\right)  \right)
=0,
\]
which means that the solution is viable in $\mathcal{K}$ and our objective is
fulfilled.\hfill
\end{proof}

\section{Appendix\label{Appendix}}

We focus now on the Stochastic Navier-Stokes-type Equation given in Example
\ref{SNSE example} and we show that hypothesis $\left(  H_{1}\right)  $ to
$\left(  H_{7}\right)  $ are verified. Consequently, this problem is covered
by our study.\medskip

\begin{proof}
[Proof for Example \ref{SNSE example}]\noindent$\left(  H_{1}%
:hemicontinuity\right)  :$ Let $u,v,x\in V$. We consider the map
\[
r\mapsto{}_{V^{\ast}}\langle A_{\mathrm{reg}}(u+rv),x\rangle_{V}.
\]
From the form (\ref{tamed SNSE A term}) of $A_{\mathrm{reg}},$ the first term
is linear and therefore continuous in $r$. The taming term $u\mapsto g(\Vert
u\Vert_{H}^{2})u$ is continuous because the norm and $g$ are continuous. The
regularized convective term%
\[
u\mapsto\chi_{R}(\Vert u\Vert_{V})\mathcal{B}(u,u)
\]
is continuous as well, since $u\mapsto\Vert u\Vert_{V}$ is continuous,
$\chi_{R}$ is continuous, and $\mathcal{B}:V\times V\rightarrow V^{\ast}$ is
bilinear and continuous. Hence $(H_{1})$ holds.$\smallskip$

\noindent$(H_{2}:weak$ $monotonicity):$ We must show that there exists $C>0$
such that
\[
_{V^{\ast}}\left\langle A_{\mathrm{reg}}(u)-A_{\mathrm{reg}}%
(v),u-v\right\rangle _{V}+\Vert\Sigma(u-v)\Vert_{L_{2}(U,H)}^{2}\leq C\Vert
u-v\Vert_{H}^{2}.
\]
We compute
\[
A_{\mathrm{reg}}(u)-A_{\mathrm{reg}}(v)=-\nu A(u-v)-\left(  \chi_{R}\left(
\Vert u\Vert_{V}\right)  \mathcal{B}(u,u)-\chi_{R}\left(  \Vert v\Vert
_{V}\right)  \mathcal{B}(v,v)\right)  -\left(  g\left(  \Vert u\Vert_{H}%
^{2}\right)  u-g\left(  \Vert v\Vert_{H}^{2}\right)  v\right)  .
\]
For the first term,
\[
{}_{V^{\ast}}\left\langle -\nu A(u-v),u-v\right\rangle _{V}=-\nu\left\Vert
u-v\right\Vert _{V}^{2}\leq0.
\]
For the regularized convection term, we write%
\[%
\begin{array}
[c]{l}%
\chi_{R}\left(  \Vert u\Vert_{V}\right)  \mathcal{B}\left(  u,u\right)
-\chi_{R}\left(  \Vert v\Vert_{V}\right)  \mathcal{B}\left(  v,v\right)
\medskip\\
\quad=\chi_{R}\left(  \Vert u\Vert_{V}\right)  \left(  \mathcal{B}\left(
u,u\right)  -\mathcal{B}\left(  v,v\right)  \right)  +\left(  \chi_{R}\left(
\Vert u\Vert_{V}\right)  -\chi_{R}\left(  \Vert v\Vert_{V}\right)  \right)
\mathcal{B}\left(  v,v\right)  .
\end{array}
\]
Using
\[
\mathcal{B}\left(  u,u\right)  -\mathcal{B}\left(  v,v\right)  =\mathcal{B}%
\left(  u-v,u\right)  +\mathcal{B}\left(  v,u-v\right)  ,
\]
and the standard estimate $\left\vert _{V^{\ast}}\langle\mathcal{B}%
(a,b),c\rangle_{V}\right\vert \leq C\Vert a\Vert_{V}\Vert b\Vert_{V}\Vert
c\Vert_{V},$ we obtain, on the active support of $\chi_{R}$,
\[
\left\vert _{V^{\ast}}\left\langle \chi_{N}\left(  \Vert u\Vert_{V}\right)
\mathcal{B}\left(  u,u\right)  -\chi_{N}\left(  \Vert v\Vert_{V}\right)
\mathcal{B}\left(  v,v\right)  ,u-v\right\rangle _{V}\right\vert \leq
C_{N}\left\Vert u-v\right\Vert _{H}^{2}~.
\]
For the taming term, since $u\mapsto g\left(  \Vert u\Vert_{H}^{2}\right)  u$
is controlled, as an application from $V$ into $V^{\ast},$ by the norm from
$V$ and it $g$ is bounded and globally Lipschitz, we can obtain%
\[%
\begin{array}
[c]{c}%
g\left(  \Vert u\Vert_{H}^{2}\right)  u-g\left(  \Vert v\Vert_{H}^{2}\right)
v=g\left(  \Vert u\Vert_{H}^{2}\right)  \left(  u-v\right)  +\left(  g\left(
\Vert u\Vert_{H}^{2}\right)  -g\left(  \Vert v\Vert_{H}^{2}\right)  \right)  v
\end{array}
\]
Therefore,%
\[
\left\{
\begin{array}
[c]{l}%
\left\Vert g\left(  \Vert u\Vert_{H}^{2}\right)  \left(  u-v\right)
\right\Vert _{V^{\ast}}\leq C\left\Vert u-v\right\Vert _{V}~,\medskip\\
\left\Vert \left(  g\left(  \Vert u\Vert_{H}^{2}\right)  -g\left(  \Vert
v\Vert_{H}^{2}\right)  \right)  v\right\Vert _{V^{\ast}}\leq L_{g}\left\vert
\Vert u\Vert_{H}^{2}-\Vert v\Vert_{H}^{2}\right\vert \left\Vert v\right\Vert
_{V}~,
\end{array}
\right.
\]
and, by the Cauchy-Schwarz inequality,%
\[
\left\vert \Vert u\Vert_{H}^{2}-\Vert v\Vert_{H}^{2}\right\vert =\left\vert
\left\langle u-v,u+v\right\rangle _{H}\right\vert \leq\left(  \Vert u\Vert
_{H}+\Vert v\Vert_{H}\right)  \left\Vert u-v\right\Vert _{H}~.
\]
As consequence, when $\Vert u\Vert_{H},\Vert v\Vert_{H}$ are bounded,%
\begin{equation}
\left\Vert g(\Vert u\Vert_{H}^{2})u-g(\Vert v\Vert_{H}^{2})v\right\Vert
_{V^{\ast}}\leq C_{\Vert u\Vert_{H},\Vert v\Vert_{H}}\left\Vert u-v\right\Vert
_{V}, \label{Lip for g}%
\end{equation}
hence
\[
\left\vert _{V^{\ast}}\left\langle \left(  g\left(  \Vert u\Vert_{H}%
^{2}\right)  u-g\left(  \Vert v\Vert_{H}^{2}\right)  \right)
v,u-v\right\rangle _{V}\right\vert \leq C\Vert u-v\Vert_{H}^{2}.
\]
Finally, by denoting the operatorial norm$\Vert\Sigma\Vert:=\sup_{\left\Vert
x\right\Vert _{H}=1}\left\Vert \Sigma x\right\Vert _{L_{2}(U,H)},$ we obtain%
\[
\left\Vert \Sigma(u-v)\right\Vert _{L_{2}(U,H)}^{2}\leq\Vert\Sigma\Vert
^{2}\Vert u-v\Vert_{H}^{2}~.
\]
Thus $(H_{2})$ holds.$\smallskip$

\noindent$(H_{3}:coercivity):$ We need to prove that
\[
_{V^{\ast}}\left\langle A_{\mathrm{reg}}(v),v\right\rangle _{V}+\left\Vert
\Sigma v\right\Vert _{L_{2}(U,H)}^{2}\leq C_{1}\left\Vert v\right\Vert
_{H}^{2}-C_{2}\left\Vert v\right\Vert _{V}^{\alpha}.
\]
We compute%
\[
_{V^{\ast}}\left\langle A_{\mathrm{reg}}(v),v\right\rangle _{V}=-\nu\left\Vert
v\right\Vert _{V}^{2}-g\left(  \left\Vert v\right\Vert _{H}^{2}\right)
\left\Vert v\right\Vert _{H}^{2}~,
\]
since the Navier--Stokes bilinear term does not produces energy in $L^{2},$
therefore it satisfies the cancellation property $_{V^{\ast}}\left\langle
B(v,v),v\right\rangle _{V}=0$. Moreover, due to $\left\Vert \Sigma
v\right\Vert _{L_{2}(U,H)}^{2}\leq\left\Vert \Sigma\right\Vert ^{2}\left\Vert
v\right\Vert _{H}^{2}$~, it follows, if we consider $\left\Vert \Sigma
\right\Vert ^{2}$ as being the boundedness of $g,$%
\[
_{V^{\ast}}\left\langle A_{\mathrm{reg}}(v),v\right\rangle _{V}+\left\Vert
\Sigma v\right\Vert _{L_{2}(U,H)}^{2}\leq2\left\Vert \Sigma\right\Vert
^{2}\left\Vert v\right\Vert _{H}^{2}-\nu\left\Vert v\right\Vert _{V}^{2}~,
\]
which means that $(H_{3})$ holds with $\alpha=2,$ $C_{1}=2\Vert\Sigma\Vert
^{2}$ and $C_{2}=\nu.\smallskip$

\noindent$(H_{4}:boundedness):$ We must prove that, for every $v\in V,$%
\[
\left\Vert A_{\mathrm{reg}}(v)\right\Vert _{V^{\ast}}\leq C_{3}\left\Vert
v\right\Vert _{V}^{\alpha-1}.
\]
Since $\alpha=2$, it is enough to prove linear growth in $\Vert v\Vert_{V}$.
We have%
\[
\left\{
\begin{array}
[c]{l}%
\left\Vert \nu Av\right\Vert _{V^{\ast}}\leq C\nu\left\Vert v\right\Vert
_{V}~,\medskip\\
\left\Vert g\left(  \left\Vert v_{H}^{2}\right\Vert \right)  v\right\Vert
_{V^{\ast}}\leq\left\Vert g\right\Vert _{\infty}\left\Vert v\right\Vert
_{H}\leq C\Vert v\Vert_{V}\medskip\\
\left\Vert \chi_{R}\left(  \left\Vert v\right\Vert _{V}\right)  \mathcal{B}%
\left(  v,v\right)  \right\Vert _{V^{\ast}}\leq C\chi_{R}\left(  \left\Vert
v\right\Vert _{V}\right)  \left\Vert v\right\Vert _{V}^{2}~\leq
C(N+1)\left\Vert v\right\Vert _{V}~,
\end{array}
\right.
\]
the last inequality holding because $\chi_{N}$ has compact support and
vanishes for $\Vert v\Vert_{V}\geq N+1$; on its active region $\Vert
v\Vert_{V}\leq N+1$. Consequently, $(H_{4})$ holds.$\smallskip$

\noindent$(H_{5}:locally$ $Lipschitz$ $continuity):$ We need to check if, for
all $u,v\in V,$%
\[
\Vert A_{\mathrm{reg}}(u)-A_{\mathrm{reg}}(v)\Vert_{V^{\ast}}\leq L_{A}\Vert
u-v\Vert_{V}.
\]
Since the linear part is obviously Lipschitz, for the convective term%
\[%
\begin{array}
[c]{l}%
\chi_{R}\left(  \left\Vert u\right\Vert _{V}\right)  \mathcal{B}\left(
u,u\right)  -\chi_{R}\left(  \left\Vert v\right\Vert _{V}\right)
\mathcal{B}\left(  v,v\right)  \medskip\\
\quad=\chi_{R}\left(  \left\Vert u\right\Vert _{V}\right)  \left(
\mathcal{B}\left(  u,u\right)  -\mathcal{B}\left(  v,v\right)  \right)
+\left(  \chi_{R}\left(  \left\Vert u\right\Vert _{V}\right)  -\chi_{R}\left(
\left\Vert v\right\Vert _{V}\right)  \right)  \mathcal{B}\left(  v,v\right)  .
\end{array}
\]
Once again, from bilinearity,%
\[
\left\Vert \mathcal{B}\left(  u,u\right)  -\mathcal{B}\left(  v,v\right)
\right\Vert _{V^{\ast}}\leq C\left(  \left\Vert u\right\Vert _{V}+\left\Vert
v\right\Vert _{V}\right)  \left\Vert u-v\right\Vert _{V}~
\]
and%
\[
\left\Vert \chi_{R}\left(  \left\Vert v\right\Vert _{V}\right)  \left(
\mathcal{B}\left(  u,u\right)  -\mathcal{B}\left(  v,v\right)  \right)
\right\Vert _{V^{\ast}}\leq C_{R}\left\Vert u-v\right\Vert _{V}~.
\]
Since $\chi_{R}\in C^{1}$ is bounded with compact support and $\mathcal{B}%
(v,v)$ is bounded on the active region, we obtain%
\[
\left\Vert (\chi_{R}\left(  \left\Vert u\right\Vert _{V}\right)  -\chi
_{R}\left(  \left\Vert v\right\Vert _{V}\right)  )\mathcal{B}\left(
v,v\right)  \right\Vert _{V^{\ast}}\leq C_{R}\left\Vert u-v\right\Vert _{V}~.
\]
The above estimates proved that the convective part is globally Lipschitz. For
the taming term, recall inequality (\ref{Lip for g}) and, we can conclude that
hypothesis $(H_{5})$ holds.$\smallskip$

\noindent$(H_{6}:dual$ $positivity):$ We must show that, for all $u\in
D\left(  J_{H}\right)  \subset V,$ such that $A\left(  u\right)  \in H$,%
\[
\langle-A_{\mathrm{reg}}(u),J_{H}^{\varepsilon}(u)\rangle_{H}\geq0.
\]
Since%
\[
-A_{\mathrm{reg}}\left(  u\right)  :=\nu Au+\chi_{R}\left(  \left\Vert
u\right\Vert _{V}\right)  \mathcal{B}\left(  u,u\right)  +g\left(  \left\Vert
u\right\Vert _{H}^{2}\right)  u,
\]
the linear dissipative term and the taming term contribute positively. The
convective term is the delicate one. The point of the strong regularization is
that the cutoff makes its contribution globally controlled, so that the
dissipative linear part dominates it on the active region. The structure of
$\chi_{R}$ assures the existence of a positive constant $C_{R}$ such that%
\[
\left\vert \left\langle \chi_{R}\left(  \Vert u\Vert_{V}\right)
\mathcal{B}\left(  u,u\right)  ,J_{H}^{\varepsilon}\left(  u\right)
\right\rangle _{H}\right\vert \leq C_{R}\left\Vert u\right\Vert _{V}^{2}~.
\]
On the other hand,%
\[%
\begin{array}
[c]{l}%
\left\langle Au,J_{H}^{\varepsilon}(u)\right\rangle _{H}=\left\Vert
u\right\Vert _{V}^{2}+\left\langle Au,J_{H}^{\varepsilon}(u)-u\right\rangle
_{H}~,\quad\text{with}\medskip\\
\left\vert \left\langle Au,J_{H}^{\varepsilon}\left(  u\right)
-u\right\rangle _{H}\right\vert \leq\left\Vert Au\right\Vert _{V^{\ast}%
}\left\Vert J_{H}^{\varepsilon}\left(  u\right)  -u\right\Vert _{V}~.
\end{array}
\]
For $u\in D\left(  A\right)  ,$ $\left\Vert Au\right\Vert _{V^{\ast}}$ is
controlled by $\left\Vert u\right\Vert _{V}$ and $J_{H}^{\varepsilon
}(u)\rightarrow u,$ in $V.$ So, for small enough $\varepsilon,$%
\[
\left\langle Au,J_{H}^{\varepsilon}\left(  u\right)  \right\rangle \geq
c\left\Vert u\right\Vert _{V}^{2},\quad c>0
\]
and
\[
\left\langle g\left(  \Vert u\Vert_{H}^{2}\right)  u,J_{H}^{\varepsilon
}\left(  u\right)  \right\rangle _{H}=g\left(  \Vert u\Vert_{H}^{2}\right)
\left\langle u,J_{H}^{\varepsilon}\left(  u\right)  \right\rangle _{H}\geq0,
\]
since $g$ takes positive values and $\left\langle u,J_{H}^{\varepsilon}\left(
u\right)  \right\rangle _{H}\geq0$. Choosing the cutoff sufficiently strong,
the dissipative contribution dominates the convective one. Therefore $\left(
H_{6}\right)  $ is now verified. This is precisely why a strong regularization
is needed: without it, the Navier--Stokes convective term is not compatible
with $(H_{6})$ in general.$\smallskip$

\noindent$(H_{7}:linear$ $noise):$ is obvious satisfied, for $M_{B}%
=\Vert\Sigma\Vert^{2}.$\hfill
\end{proof}

\bigskip\bigskip

\noindent Funding: Not applicable.\bigskip

\bigskip

\begin{acknowledgement}
The authors would like to express their sincere gratitude to the anonymous
referees for their comments and suggestions, which have resulted in
considerable improvement of the results and presentation of this article.
\end{acknowledgement}

\end{document}